\newtheorem{thm}{Theorem}%
\newtheorem{lem}{Lemma}%
\newtheorem{prop}{Proposition}%
\theoremstyle{definition}
\theoremstyle{remark}
\newtheorem{remark}{Remark}[section] %
\theoremstyle{plain}
\numberwithin{equation}{section}
\def\MM{{\mathbb M}}
\def\NN{{\mathbb N}}
\def\RR{{\mathbb R}}
\def\ZZ{{\mathbb Z}}
\def\hatNN{\widehat{\mathbb N}}
\def\S{\operatorname{S{}}}
\def\veca{{\text{\boldmath$a$}}}
\def\vecb{{\text{\boldmath$b$}}}
\def\vecc{{\text{\boldmath$c$}}}
\def\vece{{\text{\boldmath$e$}}}
\def\vecell{{\text{\boldmath$\ell$}}}
\def\vecm{{\text{\boldmath$m$}}}
\def\vecn{{\text{\boldmath$n$}}}
\def\vecq{{\text{\boldmath$q$}}}
\def\vecp{{\text{\boldmath$p$}}}
\def\vecr{{\text{\boldmath$r$}}}
\def\vecs{{\text{\boldmath$s$}}}
\def\vecv{{\text{\boldmath$v$}}}
\def\vecx{{\text{\boldmath$x$}}}
\def\vecy{{\text{\boldmath$y$}}}
\def\vecz{{\text{\boldmath$z$}}}
\def\vecalf{{\text{\boldmath$\alpha$}}}
\def\veceps{{\text{\boldmath$\epsilon$}}}
\def\veczeta{{\text{\boldmath$\zeta$}}}
\def\vecnull{{\text{\boldmath$0$}}}
\def\scrD{{\mathcal D}}
\def\scrE{{\mathcal E}}
\def\fF{{\mathfrak F}}
\def\fP{{\mathfrak P}}
\def\diag{\operatorname{diag}}
\def\diam{\operatorname{diam}}
\def\scl{\operatorname{scl}}
\def\GL{\operatorname{GL}}
\def\SL{\operatorname{SL}}
\def\SO{\operatorname{SO}}
\def\sgn{\operatorname{sgn}}
\def\vol{\operatorname{vol}}
\def\trans{\,^\mathrm{t}\!}
\newcommand{\R}{\mathbb{R}}
\newcommand{\Z}{\mathbb{Z}}
\newcommand{\minmod}{\text{ mod }}
\newcommand{\sfrac}[2]{{\textstyle \frac {#1}{#2}}}
\newcommand{\col}{\: : \:}
\newcommand{\bn}{\mathbf{0}}
\newcommand{\ve}{\varepsilon}
\newcommand{\matr}[4]{\left( \begin{matrix} #1 & #2 \\ #3 & #4 \end{matrix} \right) }
\newcommand{\smatr}[4]{\left( \begin{smallmatrix} #1 & #2 \\ #3 & #4 \end{smallmatrix} \right) }
\title{Diameters of random circulant graphs}
\author{Jens Marklof}
\author{Andreas Str\"ombergsson}
\address{School of Mathematics, University of Bristol,
Bristol BS8 1TW, U.K.\newline
\rule[0ex]{0ex}{0ex} \hspace{8pt}{\tt j.marklof@bristol.ac.uk}}
\address{Department of Mathematics, Box 480, Uppsala University,
SE-75106 Uppsala, Sweden\newline
\rule[0ex]{0ex}{0ex} \hspace{8pt}{\tt astrombe@math.uu.se}}
\date{\today}
\thanks{\textit{Keywords and phrases:} Circulant graph, Cayley graph, multi-loop network, lattice covering, homogeneous flow}
\thanks{2010 \textit{Mathematics Subject Classification:} 
05C12, %
05C80, 11H31, 37A17, 90B10.}
\thanks{J.M.\ is supported by a Royal Society Wolfson Research Merit Award and a Leverhulme Trust Research Fellowship.
A.S.\ is a Royal Swedish Academy of Sciences Research Fellow supported by
a grant from the Knut and Alice Wallenberg Foundation, and is furthermore supported by the Swedish Research Council Grant 621-2007-6352.}
\begin{document}

\begin{abstract}
The diameter of a graph measures the maximal distance between any pair of vertices. The diameters of many small-world networks, as well as a variety of other random graph models, grow logarithmically in the number of nodes. In contrast, the worst connected networks are cycles whose diameters increase linearly in the number of nodes. In the present study we consider an intermediate class of examples: Cayley graphs of cyclic groups, also known as circulant graphs or multi-loop networks. We show that the diameter of a random circulant $2k$-regular graph with $n$ vertices scales as $n^{1/k}$, and establish a limit theorem for the distribution of their diameters. We obtain analogous results for the distribution of the average distance and higher moments.
\end{abstract}

\maketitle

\section{Introduction \label{secIntro}}

The diameter of a graph is the largest distance between any pair of vertices, and is a popular measure for the connectedness of a network. Many models of small-world networks, for example, have diameters that grow slowly (i.e., logarithmically) with the total number of nodes \cite{Bollobas04}, \cite{aGfX2007}. The same phenomenon is observed for a wide variety of other random graph models, and has been proved rigorously in many instances \cite{Bollobas81}, \cite{Bollobas82}, \cite{Chung01}, \cite{Ding10}, \cite{Fernholz07}, \cite{Lu01}, \cite{Riordan2010}. The worst connected networks are cycles, whose diameters increase linearly with the number of vertices. Here, connectedness is dramatically improved by additionally linking every vertex with a random partner; the logarithmic growth of the diameter is then recovered \cite{Bollobas88}. 

In the present paper we consider a more regular generalization, the {\em circulant graphs} (often also called {\em multi-loop networks}) which comprise an interwoven assembly of cycles (Figs.\ \ref{figCG8}, \ref{figCG10} left). We will show that the diameter of a random $2k$-regular circulant graph with $n$ vertices scales as $n^{1/k}$, and prove a limit theorem for the distribution of diameters of such graphs; the existence of a limit distribution was recently conjectured in \cite{Amir10}. Analogous results hold for the distribution of the average distance in a circulant graph and related quantities, see Sec.\ \ref{secEx} for details. It is interesting to note that an algebraic scaling  of the diameter has also been observed for the largest connected component of the critical Erd\"os-R\'enyi random graph \cite{Nachmias08}; here the scaling factor is $n^{1/3}$.

We furthermore establish corresponding results for circulant digraphs (cf.\ Figs.\ \ref{figCG8}, \ref{figCG10} right), where the limit distribution of diameters turns out to coincide with the limit distribution of Frobenius numbers in $d=k+1$ variables studied in \cite{Marklof10}. The connection of these two objects has been exploited previously %
\cite{dBjHaNsW2005}, \cite{aN79}, \cite{Rodseth96}, \cite{Ustinov10}. 
As for the Frobenius problem \cite{Kannan92}, the question of calculating the diameter of circulant graphs can be transformed to a problem in the geometry of numbers \cite{Cai99}, \cite{Zerovnik93}. We will use a particularly transparent approach that identifies circulant graphs with lattice graphs on flat tori \cite{sCjSmAtC2010}, \cite{rDvF2004}, and then employ the ergodic-theoretic method developed in \cite{Marklof10} to prove the existence of the limit distribution of diameters. 

Let us fix an integer vector $\veca=(a_1,\ldots,a_k)$ with distinct positive coefficients $0<a_1<\ldots<a_k\leq \frac{n}{2}$. We construct a graph $C_n(\veca)$ with $n$ vertices $0,1,2,\ldots,n-1$, by connecting vertex $i$ and $j$ whenever $|i-j|\equiv a_h \bmod n$ for some $h\in\{ 1,\ldots, k\}$. Because the adjacency matrix of this graph is circulant, $C_n(\veca)$ is called a {\em circulant graph}.  If $a_k<\frac{n}{2}$, then $C_n(\veca)$ is $2k$-regular, i.e., every vertex has precisely $2k$ neighbours. If $a_k=\frac{n}{2}$, then $C_n(\veca)$ is $(2k-1)$-regular.
It is easy to see that $C_n(\veca)$ is connected if and only if $\gcd(a_1,\ldots,a_k,n)=1$. 
In this case $C_n(\veca)$ is the (undirected) Cayley graph of $\Z/n\Z$ with respect to the generating set $\{\pm a_1,\ldots,\pm a_k\}$.

\begin{figure}
\begin{center}
\includegraphics[width=0.3\textwidth]{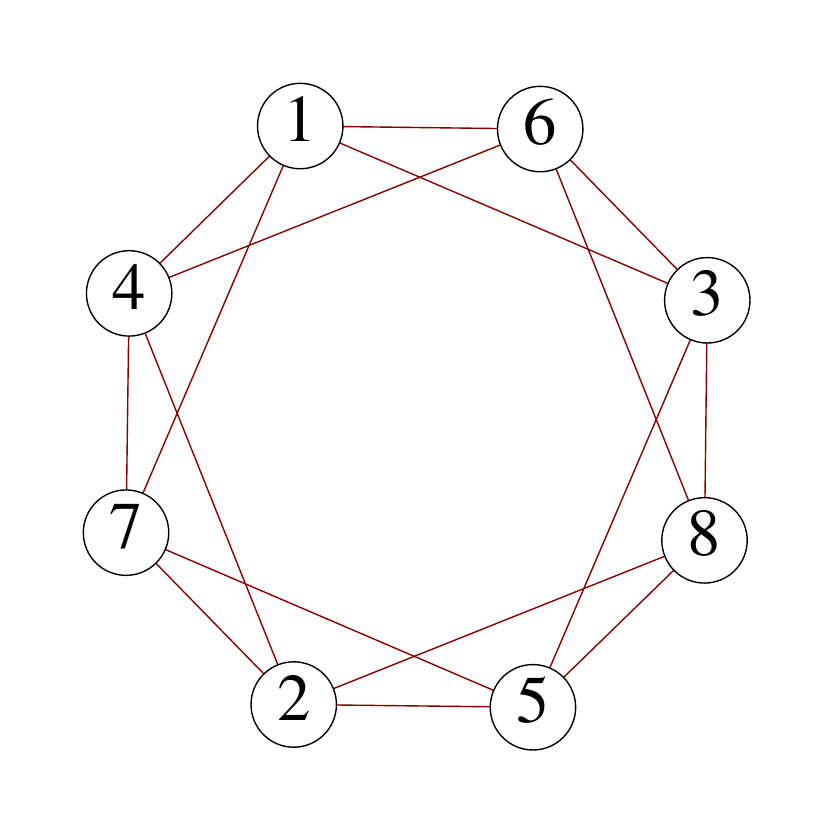}
\includegraphics[width=0.3\textwidth]{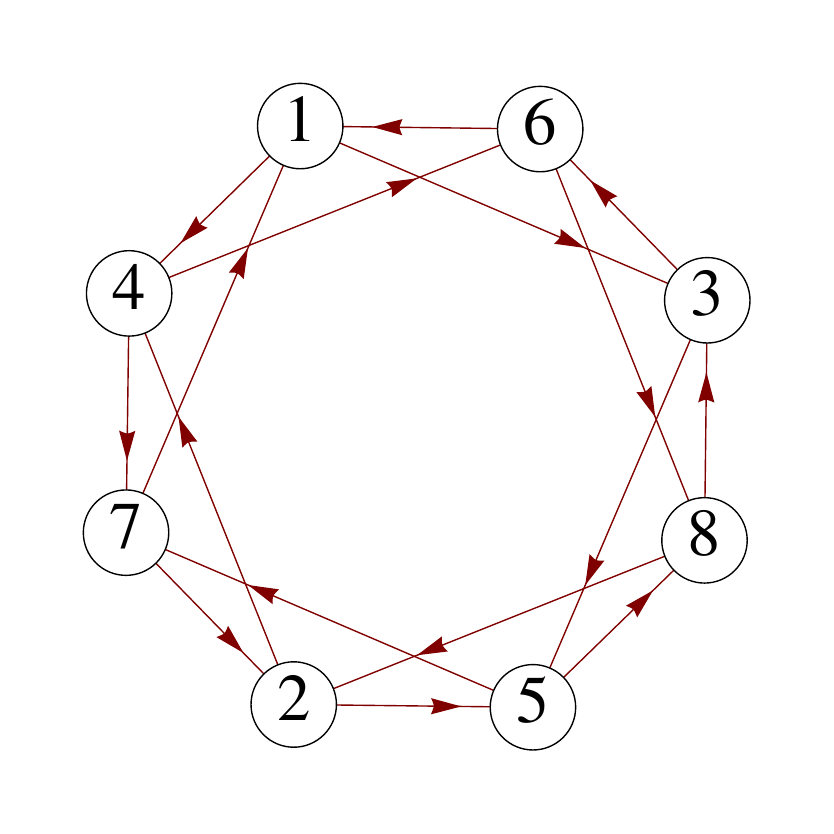}
\includegraphics[width=0.3\textwidth]{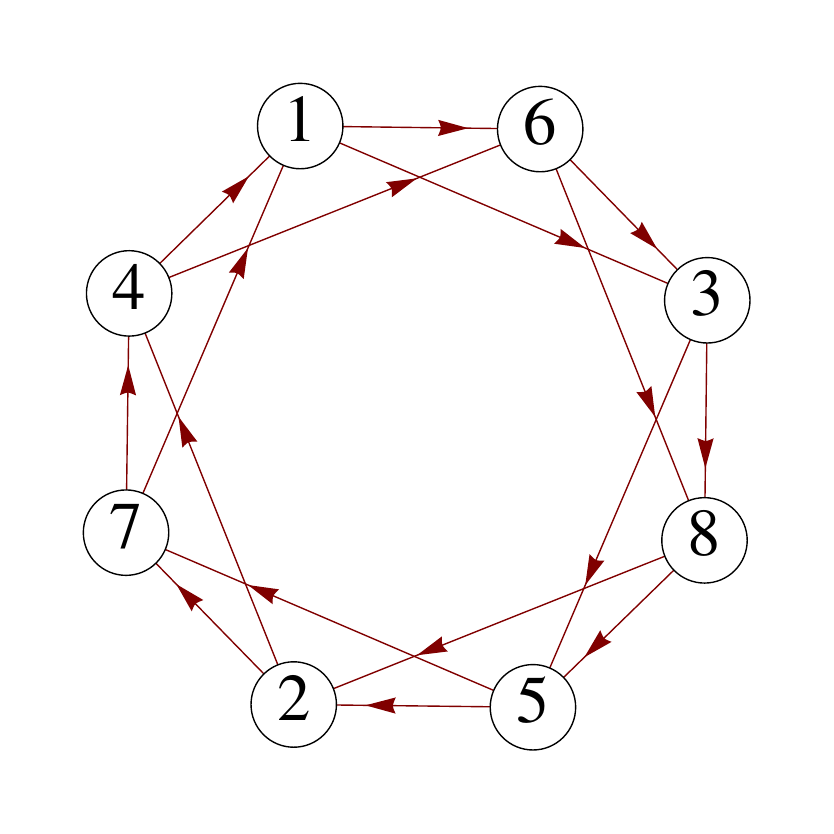}
\end{center}
\caption{The 4-regular circulant graph $C_{8}(2,3)$ and the circulant digraphs $C_{8}^+(2,3)$, $C_{8}^+(2,5)$. The corresponding diameters are 2, 3 and 4, respectively. \label{figCG8}}
\end{figure}

\begin{figure}
\begin{center}
\includegraphics[width=0.3\textwidth]{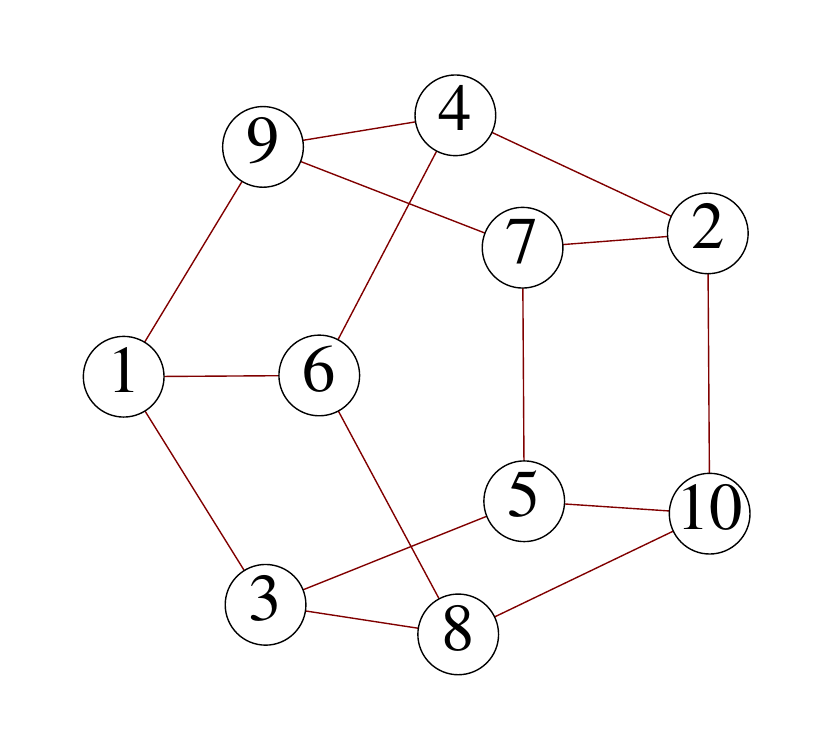}
\includegraphics[width=0.3\textwidth]{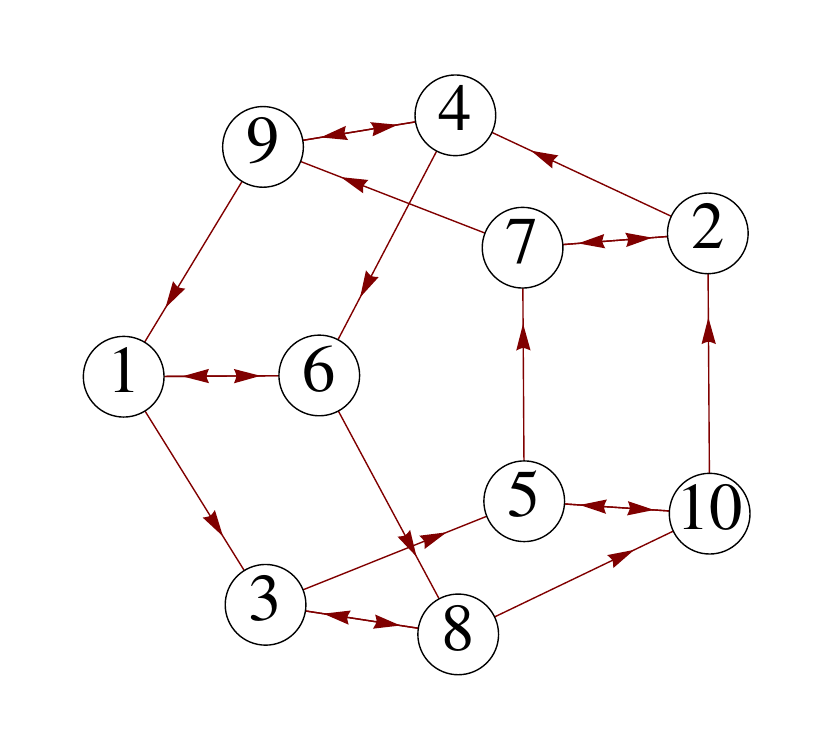}
\includegraphics[width=0.3\textwidth]{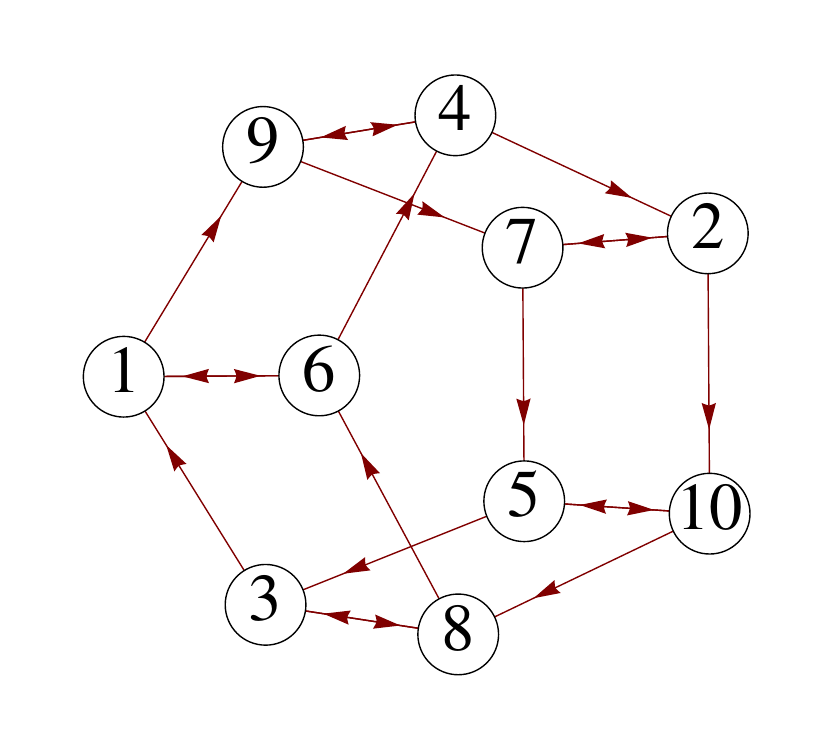}
\end{center}
\caption{The 3-regular circulant graph $C_{10}(2,5)$ and the circulant digraphs $C_{10}^+(2,5)$, $C_{10}^+(5,8)$. The corresponding diameters are 3, 5 and 5, respectively. \label{figCG10}}
\end{figure}

To construct a {\em directed} circulant graph ({\em circulant digraph} for short) choose an integer vector $\veca=(a_1,\ldots,a_k)$ with distinct positive coefficients $0<a_1<\ldots<a_k<n$. The circulant digraph $C_n^+(\veca)$ is defined to have an edge from $i$ to $j$ whenever $j-i \equiv a_h \bmod n$ for some $h\in\{ 1,\ldots, k\}$. In $C_n^+(\veca)$, every vertex has precisely $k$ outgoing and $k$ incoming edges.
$C_n^+(\veca)$ is strongly connected if and only if $\gcd(a_1,\ldots,a_k,n)=1$. In this case $C_n^+(\veca)$ is the directed Cayley graph of $\Z/n\Z$ with respect to the generating set $\{a_1,\ldots,a_k\}$.

Fix a vector $\vecell=(\ell_1,\ldots,\ell_k)\in\RR_{>0}^k$. We endow our circulant (di-)graph with a \mbox{(quasi-)metric} by stipulating that the edge from $i$ to $j\equiv i+a_h \bmod n$ has length $\ell_h$. We denote the corresponding metric graphs by $C_n(\vecell,\veca)$ and $C_n^+(\vecell,\veca)$, respectively. The distance $d(i,j)$ between two vertices is the length of the shortest path from $i$ to $j$. The diameter is the maximal distance between any pair of vertices,
\begin{equation}
	\diam=\max_{i,j} d(i,j) .
\end{equation}

To define an ensemble of random circulant graphs, we set
\begin{align}
&\fF^+:=\{ \vecx\in\RR^{k+1} : 0<x_1<\ldots<x_k<x_{k+1} \};
\qquad
\fF:=\fF^+\cap\{x_k\leq \sfrac12 x_{k+1} \},
\end{align}
and then in the directed case we fix an arbitrary bounded subset 
$\scrD\subset\fF^+$ with nonempty interior and boundary of Lebesgue measure
zero;
in the undirected case we fix an arbitrary bounded subset $\scrD\subset\fF$
subject to the same conditions.
Denote by $\widehat\NN^{k+1}$ the set of integer vectors in $\RR^{k+1}$ with positive coprime coefficients (i.e., the greatest common divisor of all coefficients is one). The numbers $(\veca,n)\in\widehat\NN^{k+1}$ defining $C_n(\vecell,\veca)$ or $C_n^+(\vecell,\veca)$ are then picked uniformly at random from 
the dilated set $T\scrD$ ($T>0$).
Note here that $\widehat\NN^{k+1}\cap T\scrD$ is nonempty for all large $T$;
in fact
\begin{align}\label{TDCARDINALITY}
	\#\big\{ (\veca,n)\in \widehat\NN^{k+1} \cap T\scrD \big\} \sim \frac{\vol(\scrD)}{\zeta(k+1)}\, T^{k+1},
\qquad\text{as }\: T\to\infty.
\end{align}

Our first main theorem shows that the (properly scaled) 
diameter of a random circulant digraph
has a limit distribution which is independent of the choice of $\scrD$.
In order to describe this limit distribution, we introduce some further
notation.
For a given closed bounded convex set $K$ of nonzero volume in $\R^k$ and a 
($k$-dimensional) lattice $L\subset\R^k$, 
we denote by $\rho(K,L)$ the covering radius
of $K$ with respect to $L$, i.e.\ the smallest
positive real number $r$ such that the translates of $rK$ by the vectors of $L$
cover all of $\R^k$:
\begin{align}
\rho(K,L)=\inf\bigl\{r>0\col rK+L=\R^k\bigr\}.
\end{align}
Let $X_k$ be the set of all lattices $L\subset\R^k$ of covolume one,
and let $\mu_0$ be the $\SL(k,\R)$ invariant probability measure on $X_k$.
Also let $\Delta$ be the simplex
\begin{equation}\label{simplex}
	\Delta= \big\{ \vecx\in\RR_{\geq 0}^k : x_1+\ldots+x_k \leq 1 \big\} .
\end{equation}

\begin{thm}\label{Thm1}
Let $k\geq2$. Then for any $\vecell\in\RR_{>0}^k$ and any bounded set 
$\scrD\subset\fF^+$ with nonempty interior and 
boundary of Lebesgue measure zero,
we have convergence in distribution
\begin{align}\label{Thm1res}
\frac{ \diam C_n^+(\vecell,\veca)}{(n \ell_1\cdots \ell_k)^{1/k}}
\xrightarrow[]{\textup{ d }}\rho(\Delta,L)
\qquad\text{as }\: T\to\infty,
\end{align}
where the random variable in the left-hand side is
defined by taking $(\veca,n)$ uniformly at random in 
$\widehat\NN^{k+1}\cap T\scrD$, and the random variable in the right-hand side
is defined by taking $L$ at random in $X_k$ according to $\mu_0$.
\end{thm}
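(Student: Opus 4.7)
The plan is to express the diameter as a lattice covering radius, and then reduce the theorem to an equidistribution statement on the space $X_k$ of covolume-one lattices in $\RR^k$. The distance from vertex $0$ to vertex $j$ in $C_n^+(\vecell,\veca)$ equals $\min\bigl\{\sum_{h=1}^k\ell_h x_h\col\vecx\in\ZZ_{\geq0}^k,\ \veca\cdot\vecx\equiv j\pmod n\bigr\}$. Introduce the sublattice $\Lambda:=\{\vecx\in\ZZ^k\col\veca\cdot\vecx\equiv0\pmod n\}\subset\ZZ^k$, which has index $n$ by the coprimality of $(\veca,n)$, and set $D=\diag(\ell_1,\ldots,\ell_k)$. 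Then the diameter is the smallest $r>0$ for which $(rD^{-1}\Delta\cap\ZZ_{\geq0}^k)+\Lambda=\ZZ^k$, where $\Delta$ is the simplex in (\ref{simplex}).

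For $r\to\infty$ this discrete covering condition is equivalent, up to an error of $O(1)$ in $r$, to the continuous one $rD^{-1}\Delta+\Lambda=\RR^k$: one direction is trivial, and for the converse any $\vecy\in\RR^k$ may be replaced by a nearest integer point, the $O(1)$ discrepancy being absorbed into a bounded enlargement of $\Delta$ (its lack of symmetry about the origin is harmless since only a bounded dilation is needed). Using $-\Lambda=\Lambda$, applying $D$, and rescaling to covolume one yields
\begin{equation}
\diam C_n^+(\vecell,\veca)=(n\,\ell_1\cdots\ell_k)^{1/k}\,\rho(\Delta, L)+O(1),
\end{equation}
where $L:=(n\,\ell_1\cdots\ell_k)^{-1/k}\,D\,\Lambda\in X_k$. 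Since the prefactor $(n\ell_1\cdots\ell_k)^{1/k}$ tends to infinity with $T$, the $O(1)$ error disappears after normalization.

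It remains to show that, as $(\veca,n)$ is drawn uniformly from $\widehat\NN^{k+1}\cap T\scrD$, the random lattice $L$ becomes equidistributed in $X_k$ with respect to $\mu_0$. The $\vecell$-dependence of $L$ is through a fixed element of $\SL(k,\RR)$ acting on the unit-covolume lattice $n^{-1/k}\Lambda$, so by $\SL(k,\RR)$-invariance of $\mu_0$ it is enough to show equidistribution of $n^{-1/k}\Lambda$. To each primitive vector $(\veca,n)$ one attaches $\gamma_{(\veca,n)}\in\SL(k+1,\ZZ)$ with bottom row $(\veca,n)$ and a suitable diagonal $a(T)\in\SL(k+1,\RR)$, so that $n^{-1/k}\Lambda$ is recovered from $\gamma_{(\veca,n)}\,a(T)$ after projecting off one coordinate. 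Equidistribution of these orbits in $\SL(k+1,\ZZ)\backslash\SL(k+1,\RR)$ under Haar measure is precisely the mechanism developed in \cite{Marklof10} for the Frobenius number problem; combined with continuity of $L\mapsto\rho(\Delta,L)$ on $X_k$, the continuous mapping theorem yields the convergence in distribution claimed in (\ref{Thm1res}).

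The main obstacle is this equidistribution step. One must translate the uniform counting measure on $\widehat\NN^{k+1}\cap T\scrD$ (whose cardinality (\ref{TDCARDINALITY}) already requires a M\"obius inclusion--exclusion) into the corresponding orbit average on $\SL(k+1,\ZZ)\backslash\SL(k+1,\RR)$, then pass to the quotient $X_k$ via a fibration whose fiber is an affine-subgroup orbit, and finally control cusp excursions so that the limit measure is $\mu_0$ and $\rho(\Delta,L)<\infty$ in the limit. Secondary care is needed to make the $O(1)$ error in the discrete-to-continuous reduction uniformly $o\bigl((n\ell_1\cdots\ell_k)^{1/k}\bigr)$---automatic for $k\geq2$---and to verify that $\rho(\Delta,\cdot)$ is continuous $\mu_0$-almost everywhere on $X_k$.
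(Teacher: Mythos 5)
Your proposal is correct and follows essentially the same route as the paper: you identify the directed diameter with the covering radius $\rho(\Delta,\cdot)$ of the rescaled lattice (up to an additive $O(1)$, which the paper pins down exactly as $\vece\cdot\vecell$ in Lemma \ref{DIAMDISCRCONTLEM} and Proposition \ref{DIAMFINALRELPROP}), and then invoke equidistribution of these lattices in $X_k$ together with continuity of $\rho(\Delta,\cdot)$ and the continuous mapping theorem. The equidistribution step you defer to the mechanism of \cite{Marklof10} is precisely what the paper carries out in Theorems \ref{equiThm3} and \ref{equiThm2} (passing through the subgroup $H$ and applying \cite[Thm.~7]{Marklof10}), and your observation that the fixed $\vecell$-dependence can be absorbed via $\SL(k,\RR)$-invariance of $\mu_0$ corresponds there to the choice of test function $f(\vecx,M)=f_0(MD'_{x_d}(\vecell))$.
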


\begin{figure}
\begin{center}
\includegraphics[width=0.7\textwidth]{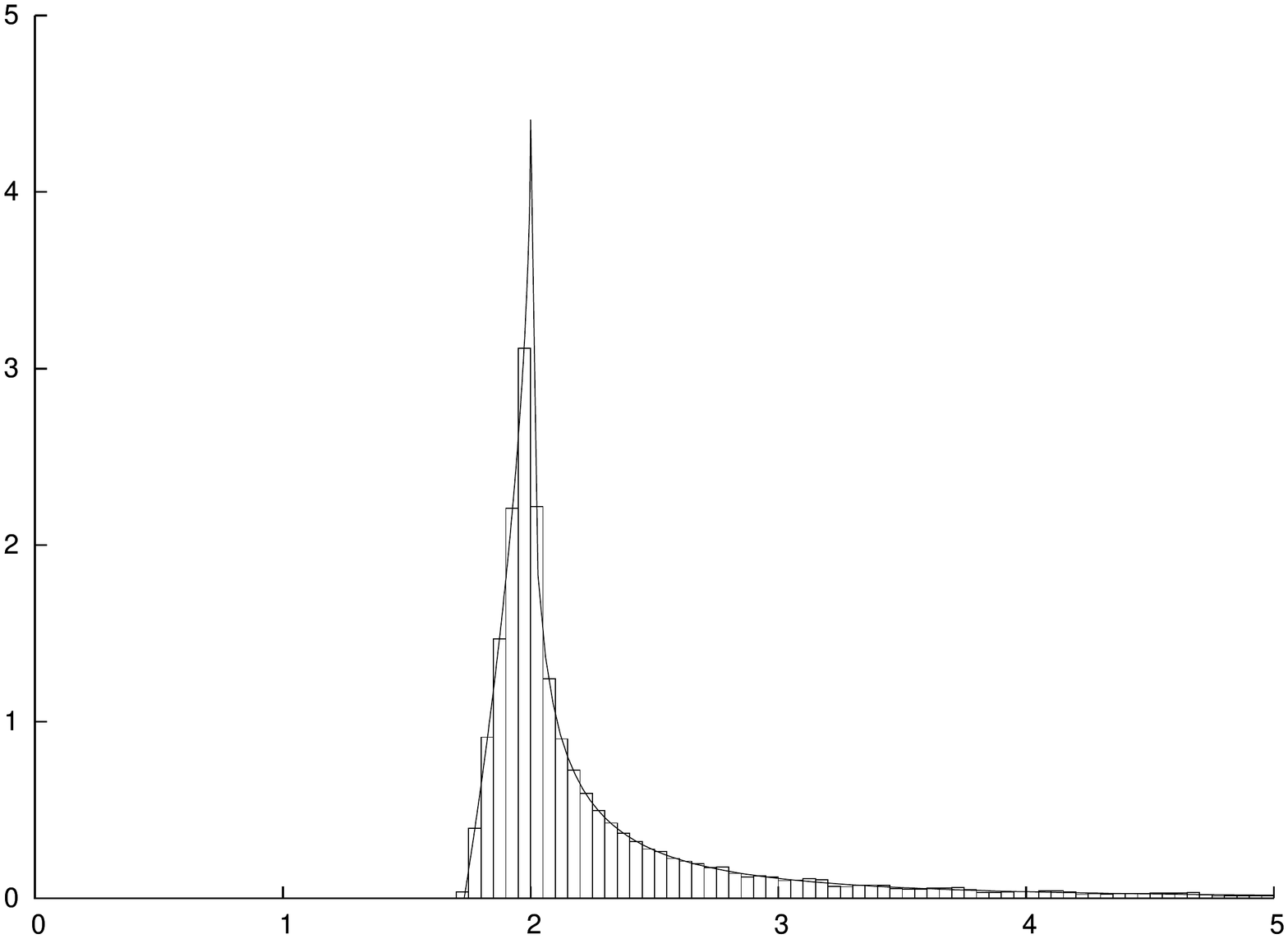}
\end{center}
\vspace{-30pt}
\caption{Distribution of diameters $n^{-1/k}(\diam C_n^+(\vecell,\veca)+\vece\cdot\vecell)$ for circulant digraphs with $k=2$  and $\vecell=\vece:=(1,1)$ vs.\ Ustinov's distribution $p_2(R)$ in \eqref{Ustinovs}. The numerical computations assume $\scrD=\fF^+\cap\{x_3\leq1\}$ and $T=1000$. \label{figlimit-dir}}
\end{figure}

\begin{figure}
\begin{center}
\includegraphics[width=0.8\textwidth]{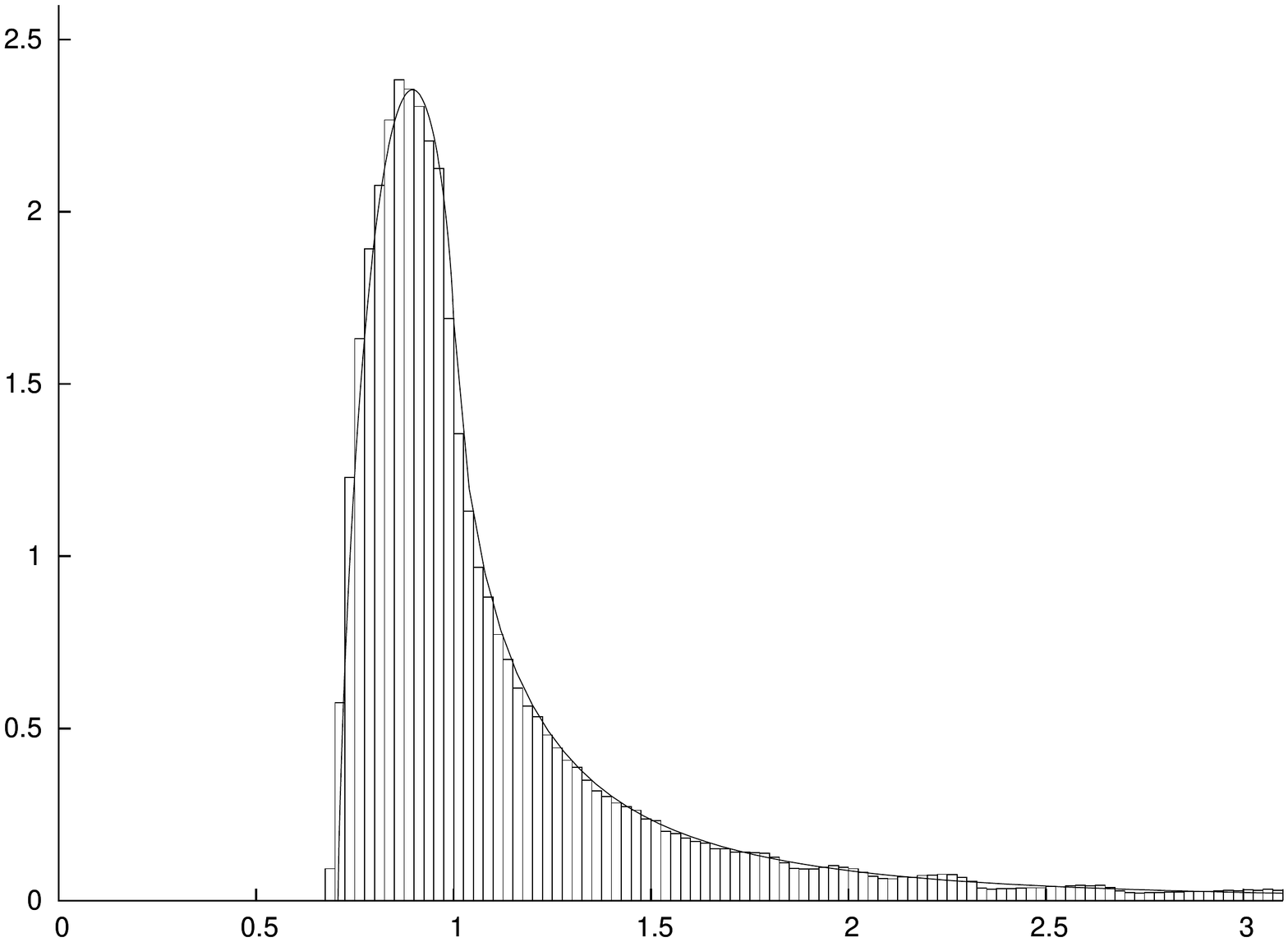}   %
\end{center}
\vspace{-30pt}
\caption{Distribution of diameters $n^{-1/k}\diam C_n(\vece,\veca)$ for circulant graphs with $k=2$ vs.\ our formula \eqref{expli}. The numerical computations assume $\scrD=\fF\cap\{x_3\leq1\}$ and $T=1000$. \label{figlimit}}
\end{figure}

\begin{remark}\label{PKEXPLREM}
The limit distribution in Theorem \ref{Thm1} is the same as the limit distribution for Frobenius numbers in $d=k+1$ variables found in \cite{Marklof10}, 
and our proof depends crucially on the equidistribution result 
proved in \cite[Thms.\ 6, 7]{Marklof10}.
Let $P_k(R)$ be the complementary distribution function of 
$\rho(\Delta,L)$, viz.
\begin{align}\label{PKRDEF}
P_k(R):=\mu_0\bigl(\bigl\{L\in X_k\col \rho(\Delta,L)>R\bigr\}\bigr).
\end{align}
($P_k(R)=\Psi_d(R)$ in the notation of \cite{Marklof10}.)
It was proved in \cite{Marklof10} that $P_k(R)$ is continuous for any fixed
$k\geq2$.
Hence, recalling also \eqref{TDCARDINALITY}, the statement of
Theorem \ref{Thm1} is equivalent 
with the statement that for any $R\geq0$ we have
\begin{align}\label{Thm1eq}
	\lim_{T\to\infty} \frac{1}{T^{k+1}} \#\bigg\{ (\veca,n)\in \widehat\NN^{k+1} \cap T\scrD\col  \frac{ \diam C_n^+(\vecell,\veca)}{(n \ell_1\cdots \ell_k)^{1/k} } > R \bigg\} = \frac{\vol(\scrD)}{\zeta(k+1)}\,P_k(R).
\end{align}
 \end{remark}

We also remark that Li \cite{hL2010} has recently proved effective 
versions of the equidistribution results in \cite{Marklof10}.
Using Li's work it should be possible to also prove effective versions of
our Theorems \ref{Thm1}, \ref{Thm2}, as well as
Theorems \ref{equiThm3}, \ref{equiThm2} in Section \ref{MAINPROOFSEC}.

\begin{remark}\label{lowbo}
In analogy with the case of Frobenius numbers \cite{Aliev07}, we also obtain the following sharp lower bound, writing $\vece:=(1,\ldots,1)\in\R^k$,
\begin{equation}
	\frac{ \diam C_n^+(\vecell,\veca) + \vece\cdot\vecell}{(n \ell_1\cdots \ell_k)^{1/k} } \geq \rho_k,\qquad \text{with }\:
	\rho_k := \inf_{L\in X_k} \rho(\Delta,L) .
\end{equation}
It follows from the description in Remark \ref{PKEXPLREM} that
\begin{align}\label{PKSUPPORT}
P_k(R)=1\:\text{ for }\: 0\leq R\leq\rho_k,
\qquad\text{and}\qquad
0<P_k(R)<1\:\text{ for }\: R>\rho_k.
\end{align}
It is proved in \cite{Aliev07} that $\rho_k>(k!)^{1/k}$, and in fact
for $k$ large,
$\rho_k$ is not much larger than %
$(k!)^{1/k}$; 
indeed
$\rho_k\leq (k!)^{1/k}(1+O(k^{-1}\log k))$ %
(cf.\ \cite[Sec.\ 9]{rDvF2004}, \cite{pG85}, \cite{cR59}).
Also for $k$ large, the limit distribution described by $P_k(R)$ has
almost all of its mass concentrated between
$(k!)^{1/k}$ and $1.757\cdot(k!)^{1/k}$.
In fact, for any fixed $\alpha>1+\eta_0$, where
$\eta_0=0.756\ldots$ is the unique real root of 
$e\log \eta+\eta=0$, $P_k(\alpha(k!)^{1/k})$ tends to zero with an
exponential rate as $k\to\infty$ \cite[Thm.\ 4.1]{strombergsson11}.
\end{remark}

\begin{remark}
For $k$ fixed and $R$ large,
\begin{equation}\label{PKASYMPT}
	P_k(R)=\frac{k+1}{2\zeta(k)}R^{-k}+O_k\bigl(R^{-k-1-\frac1{k-1}}\bigr).
\end{equation}
This asymptotic formula is proved in \cite[Thm.\ 1.2]{strombergsson11}.
The upper bound $P_k(R)\ll R^{-k}$ had previously been proved in \cite{hL2010}.
\end{remark}

\begin{remark}
For $k=2$, Theorem \ref{Thm1} has been proved by Ustinov by different methods, see the last section of \cite{Ustinov10}. This paper also computes an explicit formula for the limit density $p_k(R)=-\frac{d}{dR}P_k(R)$ (which coincides with the distribution of Frobenius numbers for three variables):
\begin{equation}\label{Ustinovs}
p_2(R)=
\begin{cases}
0 & (0\leq R \leq \sqrt 3)  \\
\frac{12}{\pi}\big(\frac{R}{\sqrt 3}-\sqrt{4-R^2}\big) & (\sqrt 3\leq R \leq 2)  \\
\frac{12}{\pi^2}\big( R\sqrt 3 \arccos\big(\frac{R+3\sqrt{R^2-4}}{4\sqrt{R^2-3}}\big)+\frac32 \sqrt{R^2-4}  \log\big(\frac{R^2-4}{R^2-3}\big)\big) & (R>2).
\end{cases}
\end{equation}
We give an alternative proof of this formula, deriving it as a 
consequence of \eqref{PKRDEF}, in Section \ref{p2REXPLSEC} below.
\end{remark}

We now turn to the case of {\em undirected} circulant graphs. The following theorem says in particular that, as in the directed case, the limit distribution
for the diameter is independent of the choice of $\scrD$.
Let $\fP$ be the (regular) polytope
\begin{equation}\label{poly}
	\fP= \big\{ \vecx\in\RR^k : |x_1|+\ldots+|x_k|\leq 1 \big\}.
\end{equation}
This %
is a $k$-dimensional cross-polytope, 
cf.\ \cite{hC73};
in particular $\fP$ is a square for $k=2$ and an octahedron for $k=3$.

\begin{thm}\label{Thm2}
Let $k\geq2$.
Then for any $\vecell\in\RR_{>0}^k$ and any bounded set 
$\scrD\subset\fF$ with nonempty interior and 
boundary of Lebesgue measure zero,
we have convergence in distribution
\begin{align}\label{Thm2res}
\frac{ \diam C_n(\vecell,\veca)}{(n \ell_1\cdots \ell_k)^{1/k}}
\xrightarrow[]{\textup{ d }}\rho(\fP,L)
\qquad\text{as }\: T\to\infty,
\end{align}
where the random variable in the left-hand side is
defined by taking $(\veca,n)$ uniformly at random in 
$\widehat\NN^{k+1}\cap T\scrD$, and the random variable in the right-hand side
is defined by taking $L$ at random in $X_k$ according to $\mu_0$.
\end{thm}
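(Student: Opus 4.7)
The plan is to mirror the strategy used for Theorem \ref{Thm1}, replacing the simplex $\Delta$ by the cross-polytope $\fP$ everywhere it appears. The underlying geometric identification is that an undirected walk in $C_n(\vecell,\veca)$ traversing generator $a_h$ a net signed number $m_h\in\Z$ of times (with negative $m_h$ meaning traversal in the reverse direction) corresponds to a vector $\vecm\in\Z^k$ with $\vecm\cdot\veca\equiv j-i\pmod n$ and weighted length $\sum_h |m_h|\ell_h$. The unit ball of this weighted $\ell^1$ norm is exactly $\diag(\ell_1^{-1},\ldots,\ell_k^{-1})\fP$. This is the crucial difference with the directed case, where the non-negativity constraint $m_h\geq0$ forced the unit ball of the length functional to be $\diag(\ell_1^{-1},\ldots,\ell_k^{-1})\Delta$.

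First, I would express the diameter as a lattice covering quantity. Setting $L(\veca,n):=\{\vecm\in\Z^k\col \vecm\cdot\veca\equiv0\pmod n\}$, a sublattice of $\Z^k$ of covolume $n$, the Cayley graph identification gives
\begin{equation*}
\diam C_n(\vecell,\veca)=\max_{\vecy\in\Z^k}\dist_{\ell^1_\vecell}(\vecy,L(\veca,n))=\rho\bigl(\diag(\ell_1^{-1},\ldots,\ell_k^{-1})\fP,L(\veca,n)\bigr)+O(1),
\end{equation*}
where the $O(1)$ correction quantifies the (bounded) gap between the discrete maximum over $\Z^k/L$ and the continuous covering radius over $\R^k/L$. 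Applying $\rho(AK,L)=\rho(K,A^{-1}L)$ and the covolume-one renormalization yields
\begin{equation*}
\frac{\diam C_n(\vecell,\veca)}{(n\ell_1\cdots\ell_k)^{1/k}}=\rho(\fP,L'(\veca,n))+o(1),
\qquad L'(\veca,n):=(n\ell_1\cdots\ell_k)^{-1/k}\diag(\ell_1,\ldots,\ell_k)L(\veca,n)\in X_k.
\end{equation*}

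Second, I would invoke the equidistribution machinery. Writing $L'(\veca,n)=\Z^k g(\veca,n)$ for an explicit $g(\veca,n)\in\SL(k,\R)$, the statement reduces to showing that the pushforward under $(\veca,n)\mapsto \Gamma g(\veca,n)\in X_k$ of the uniform measure on $\hatNN^{k+1}\cap T\scrD$ converges weakly to $\mu_0$ as $T\to\infty$. This is precisely the equidistribution statement (Thms.\ 6, 7 of \cite{Marklof10}) used in the proof of Theorem \ref{Thm1}; it is purely about random lattices and has no dependence on the test body, so the same result applies verbatim. The only genuinely new input is the observation that, thanks to the constraint $x_k\leq\sfrac12 x_{k+1}$ defining $\fF$, the set $\{\pm a_1,\ldots,\pm a_k\}$ consists of $2k$ distinct residues modulo $n$ and there is no ambiguity in the identification of generators with their sign-flipped counterparts; this is what makes the lattice $L(\veca,n)$ the correct object in the undirected case.

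Finally, I would deduce convergence in distribution for the covering radius. Since $\fP$ is a bounded symmetric convex body with nonempty interior, the map $L\mapsto\rho(\fP,L)$ is continuous on $X_k$, and an argument parallel to the proof of continuity of $P_k(R)$ (Remark \ref{PKEXPLREM}) shows that the distribution of $\rho(\fP,L)$ under $\mu_0$ has no atoms. Combining with the equidistribution gives \eqref{Thm2res}.

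The main obstacle I expect is not the equidistribution itself (which is imported) but rather the careful bookkeeping of the $o(1)$ correction and the verification that the discrete-to-continuous gap in the covering radius is negligible uniformly on a set of full limiting measure. Concretely, one needs a quantitative bound of the form $|\diam - \rho(\diag(\ell_h^{-1})\fP,L(\veca,n))|\leq C(\vecell)$ independent of $(\veca,n)$, combined with a lower bound ensuring that $\rho(\fP,L'(\veca,n))$ stays bounded away from zero on a set whose complement has vanishing measure as $T\to\infty$ (so that dividing by $(n\ell_1\cdots\ell_k)^{1/k}\to\infty$ absorbs the correction). Once this is in place, the rest of the argument is a clean transfer from the Theorem \ref{Thm1} framework with $\Delta\rightsquigarrow\fP$ and $\fF^+\rightsquigarrow\fF$.
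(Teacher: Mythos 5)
Your proposal follows essentially the same route as the paper: identify $C_n(\vecell,\veca)$ with a quotient lattice graph so that the diameter equals the covering radius $\rho(\fP,\cdot)$ of the rescaled unimodular lattice up to a bounded additive correction (the paper's Lemmas \ref{GRAPHISOLEM}, \ref{DIAMDISCRCONTLEM}, \ref{DIAMASCOVRADLEM} and Proposition \ref{DIAMFINALRELPROP}), then apply the equidistribution theorem imported from \cite{Marklof10} together with continuity of $L\mapsto\rho(\fP,L)$ and the continuous mapping theorem, the correction being absorbed since $(n\ell_1\cdots\ell_k)^{-1/k}\to0$. The only cosmetic differences are that your "bounded away from zero" requirement is unnecessary (an additive $O(1)$ error divided by $(n\ell_1\cdots\ell_k)^{1/k}\to\infty$ vanishes regardless), and atomlessness of the limit law is not needed for convergence in distribution itself.
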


\begin{remark}
Let $\tilde P_k(R)$ be the complementary distribution function of 
$\rho(\fP,L)$, viz.
\begin{align}\label{TILDEPKDEF}
\tilde P_k(R):=\mu_0\bigl(\bigl\{L\in X_k\col \rho(\fP,L)>R\bigr\}\bigr).
\end{align}
This function is continuous (cf.\ %
Section \ref{TILDEPCONTSEC}
below), and hence,
recalling also \eqref{TDCARDINALITY}, the statement of 
Theorem \ref{Thm2} is equivalent with the statement that for any $R\geq0$ 
we have
\begin{equation}\label{Thm2eq}
	\lim_{T\to\infty} \frac{1}{T^{k+1}} \#\bigg\{ (\veca,n)\in \widehat\NN^{k+1} \cap T\scrD \col  \frac{ \diam C_n(\vecell,\veca)}{(n \ell_1\cdots \ell_k)^{1/k} } > R \bigg\} = \frac{\vol(\scrD)}{\zeta(k+1)}\, \tilde P_k(R) .
\end{equation}
\end{remark}

\begin{remark}\label{lowbo2}
We have the lower bound (cf.\ Proposition \ref{DIAMFINALRELPROP} and
Lemma \ref{DIAMASCOVRADLEM} below)
\begin{equation}\label{TILDERHOKDEF}
	\frac{ \diam C_n(\vecell,\veca) + \frac12 \vece\cdot\vecell}{(n \ell_1\cdots \ell_k)^{1/k} } \geq \tilde\rho_k,\qquad \text{with }\:
	\tilde\rho_k := \inf_{L\in X_k}\rho(\fP,L).
\end{equation}
(Recall $\vece:=(1,\ldots,1)\in\R^k$).
Also the distribution described by $\tilde P_k(R)$ has support exactly in
the interval $[\tilde\rho_k,\infty)$,
in analogy with \eqref{PKSUPPORT}.
Since any covering of $\R^k$ has density at least one we have 
\begin{align}\label{TILDERHOKLOWBOUND}
\tilde\rho_k\geq\vol(\fP)^{-1/k}=\sfrac12(k!)^{1/k}.
\end{align}
In fact \eqref{TILDERHOKLOWBOUND} holds with equality for $k=2$;
$\tilde\rho_2=\frac1{\sqrt2}$,
since there exist lattice coverings of $\R^2$ by squares without
any overlap;
however for every $k\geq3$ we have strict inequality in
\eqref{TILDERHOKLOWBOUND}; cf.\ Section \ref{TILDERHOSTRICTINEQSEC} below.
We also have 
$\tilde\rho_k\leq\frac12(k!)^{1/k}(1+O(k^{-1}\log k))$
(again cf.\ \cite[Sec.~9]{rDvF2004}, \cite{pG85}, \cite{cR59}),
and for $k$ large, the limit distribution described by $\tilde P_k(R)$
has almost all of its mass concentrated between
$\frac12(k!)^{1/k}$ and $1.757\cdot\frac12(k!)^{1/k}$,
in the same sense as for $P_k(R)$
\cite[Thm.~4.1]{strombergsson11}.
\end{remark}

\begin{remark}\label{PTILDEASYMPTREM}
For $k$ fixed and $R$ large,
we will show in Section \ref{PTILDEASYMPTSEC} that
\begin{equation}\label{PTILDEASYMPTREMRES}
\tilde P_k(R)=\frac{R^{-k}}{2\zeta(k)}+O_k\bigl(R^{-k-1-\frac1{k-1}}\bigr).
\end{equation}
\end{remark}

\begin{remark}\label{TILDEPEXPLREM}
In the case $k=2$, the limit density $\tilde p_k(R)=-\frac{d}{dR}\tilde P_k(R)$ can be calculated explicity; we will show in Section \ref{secExplicit} that
\begin{equation}\label{expli}
\tilde p_2(R)=
\begin{cases} 0 & (0\leq R\leq\frac{1}{\sqrt 2}) \\
\frac{24}{\pi^2} \big(\frac{2R^2-1}{R}\log\big(\frac{2R^2}{2R^2-1}\big)+\frac{1-R^2}{R}\log\big(\frac{R^2}{|1-R^2|}\big)\big) & (R>\frac{1}{\sqrt 2}) .
\end{cases}
\end{equation}
\end{remark}

The outline of the paper is as follows.
In Section \ref{MAINPROOFSEC} we prove Theorems \ref{Thm1} and \ref{Thm2},
by realizing the
circulant graphs as lattice graphs on flat tori, and applying the central
equidistribution result proved in \cite{Marklof10}.
In Section \ref{RHOPLDISTRSEC} we prove the assertions which we 
have made about the limit distribution in Theorem \ref{Thm2},
viz.\ that the distribution function $R\mapsto\tilde P_k(R)$
is continuous, that we have strict inequality
$\tilde\rho_k>\frac12(k!)^{1/k}$ for every $k\geq3$,
and that $\tilde P_k(R)$ has the precise polynomial decay as given by
\eqref{PTILDEASYMPTREMRES}.
In Section \ref{secExplicit} we prove the explicit formula for 
$\tilde p_2(R)$, and also give a new proof of the explicit formula
for $p_2(R)$.
Finally in Section \ref{secEx} we discuss a number of natural
extensions and variations of Theorems \ref{Thm1} and \ref{Thm2}.

\subsection*{Acknowledgements}
We are grateful to Svante Janson for inspiring and helpful discussions.

\begin{figure}
\begin{center}
\includegraphics[width=0.7\textwidth]{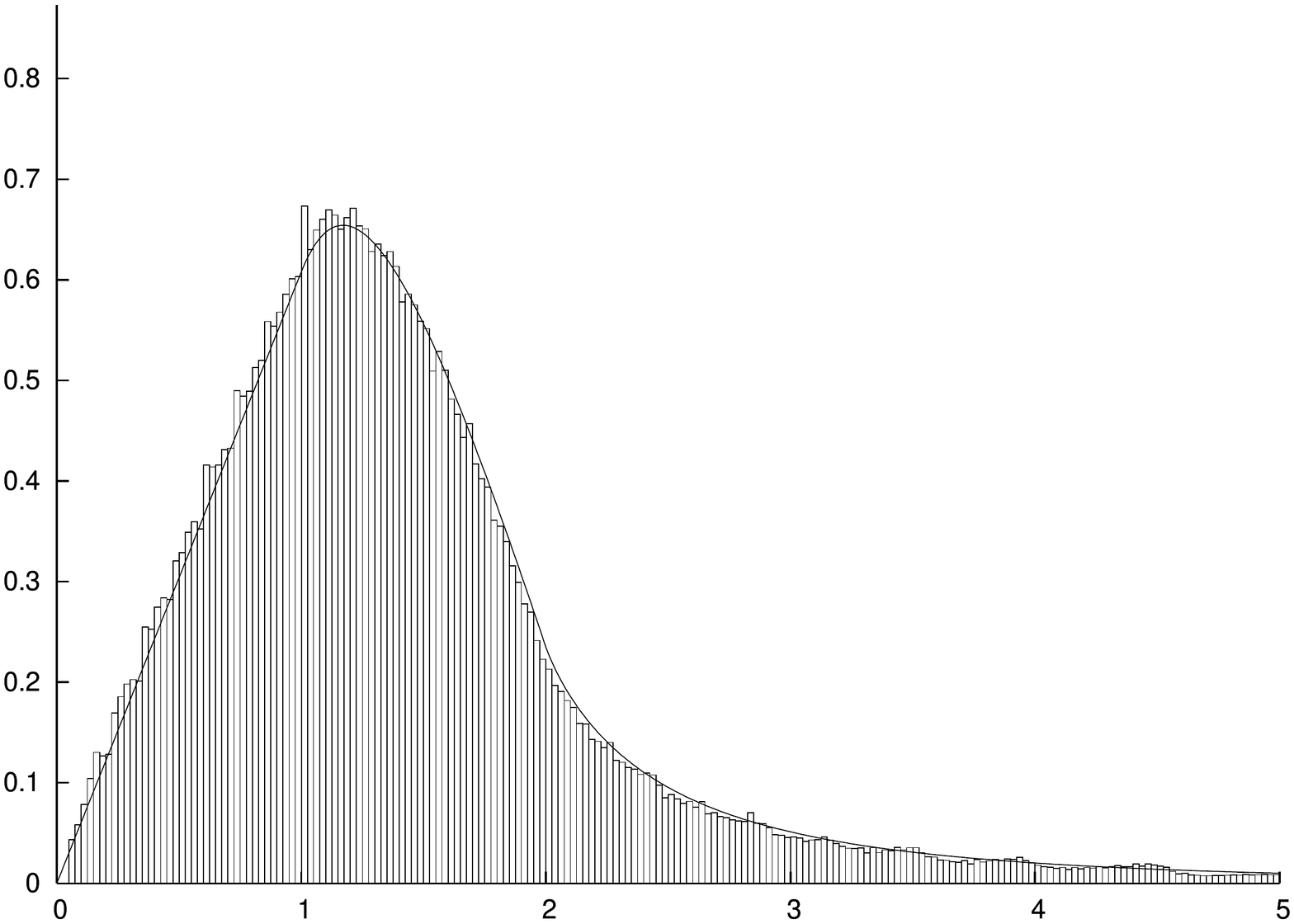}
\end{center}
\vspace{-30pt}
\caption{Distribution of the shortest cycle length $n^{-1/k}\scl C_n^+(\vece,\veca)$ for circulant digraphs with $k=2$ vs.\ the probability density $p_{2,\scl}(R)$ discussed in Section \ref{secEx}. The numerical computations assume $\scrD=\fF^+\cap\{x_3\leq1\}$ and $T=1000$. \label{figcycle-dir}}
\end{figure}

\begin{figure}
\begin{center}
\includegraphics[width=0.7\textwidth]{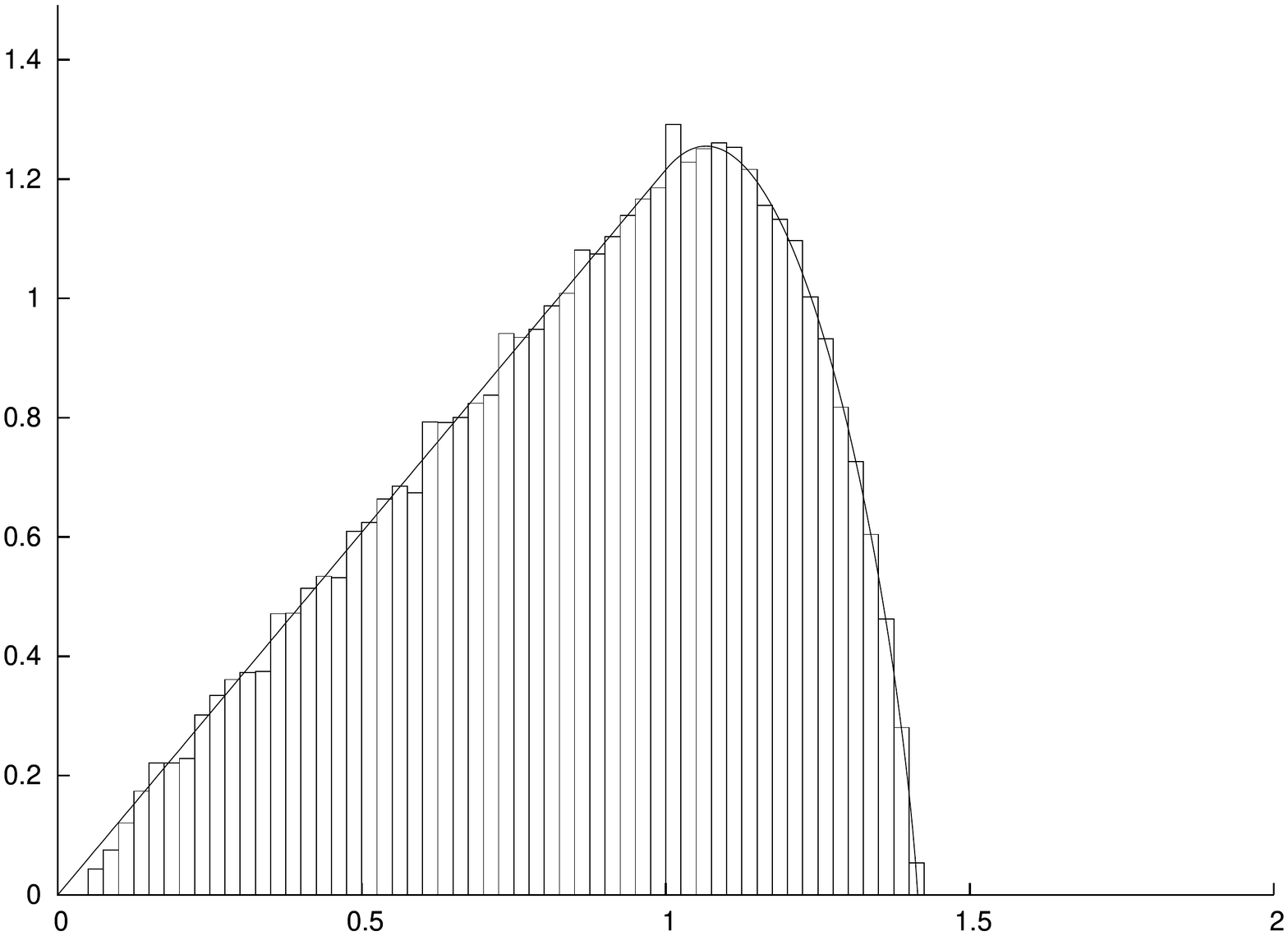}
\end{center}
\vspace{-30pt}
\caption{Distribution of the shortest non-trivial cycle length $n^{-1/k}\scl C_n(\vece,\veca)$ for circulant graphs with $k=2$ vs.\ the probability density $\tilde p_{2,\scl}(R)$ discussed in Section \ref{secEx}. The numerical computations assume $\scrD=\fF\cap\{x_3\leq1\}$ and $T=1000$. \label{figcycle}}
\end{figure}

\section{Lattice graphs on flat tori and their continuum limit}
\label{MAINPROOFSEC}

In this section we will prove Theorems \ref{Thm1} and \ref{Thm2}. 
The first step is to
realize an arbitrary circulant graph as a lattice graph on a flat torus.
This has previously been %
used in \cite{rDvF2004} and \cite{sCjSmAtC2010}; we here give
an alternative presentation, adapted so as to make the equidistribution
results from \cite{Marklof10} apply in a transparent fashion.

\subsection{Directed lattice graphs}

Let $LG_{k}^+=(\ZZ^{k},E)$ be the standard directed lattice graph with vertex set $\ZZ^{k}$; the edge set $E$ comprises all directed edges of the form $(\vecm,\vecm+\vece_h)$ where $\vecm\in\ZZ^k$ and $\vece_1,\ldots,\vece_k$ is the standard basis. We define a quasimetric on $LG_k^+$ by fixing $\vecell=(\ell_1,\ldots,\ell_k)\in\RR_{>0}^k$ and assigning length $\ell_h$ to every edge of the form $(\vecm,\vecm+\vece_h)$. The distance from vertex $\vecm$ to $\vecn$ in $LG_{k}^+$ is then given by
\begin{equation}
d(\vecm,\vecn)=
\begin{cases}
(\vecn-\vecm)\cdot\vecell & \text{if $\vecn-\vecm\in \ZZ_{\geq 0}^k$,}\\
\infty & \text{otherwise.}
\end{cases}
\end{equation}

If $\Lambda$ is a sublattice of $\ZZ^k$ we define the quotient lattice graph 
$LG_{k}^+/\Lambda$ as the digraph with vertex set $\ZZ^k/\Lambda$ and edge set 
\begin{align}
\bigl\{(\vecm+\Lambda,\vecm+\vece_h+\Lambda):\vecm\in\ZZ^k,\: h=1,\ldots,k\bigr\}.
\end{align}
(Note that edges of the form $(\vecm+\Lambda,\vecm+\Lambda)$ correspond to loops.)
The distance from vertex $\vecm+\Lambda$ to $\vecn+\Lambda$ in $LG_k^+/\Lambda$ is
\begin{align}\label{LGPQUOTDISTDEF}
d\bigl(\vecm+\Lambda,\vecn+\Lambda\bigr)=\begin{cases}
\min\bigl((\vecn-\vecm+\Lambda)\cap\ZZ_{\geq0}^k\bigr)\cdot\vecell
&\text{if }\:(\vecn-\vecm+\Lambda)\cap\ZZ_{\geq0}^k\neq\emptyset,
\\
\infty&\text{otherwise.}\end{cases}
\end{align}

Set $d=k+1$.
Given $(\veca,n)=(a_1,\ldots,a_k,n)\in\hatNN^d$ with $0<a_1<\cdots<a_{k}<n$,
we introduce the following sublattices of $\Z^d$:
\begin{equation}\label{KNADEF}
\Lambda_n=\ZZ^k\times n\ZZ \quad\text{and}\quad \Lambda_n(\veca)= \Lambda_n u(\veca),
\end{equation}
where
\begin{equation}\label{UADEF}
u(\veca):=\begin{pmatrix} 1_k & \trans\veca \\ \vecnull & 1 \end{pmatrix}
\in\SL(d,\Z).
\end{equation}
For a subset $Y\subset\RR^d$ we denote by $Y_0$ the set $Y\cap (\RR^k\times\{0\})$; we view $Y_0$ as a subset of $\RR^k$.

\begin{lem}\label{DIGRAPHISOLEM}
The set $\Lambda_n(\veca)_0$ is a sublattice of $\ZZ^k$ of index $n$; furthermore the quasimetric digraphs $LG_{k}^+/\Lambda_n(\veca)_0$ and $C_n^+(\vecell,\veca)$ are isomorphic.
\end{lem}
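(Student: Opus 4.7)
The plan is to unpack the definition of $\Lambda_n(\veca)$ explicitly, use this to identify $\Lambda_n(\veca)_0$ as the kernel of an obvious homomorphism $\ZZ^k\to\ZZ/n\ZZ$, and then observe that this same homomorphism, viewed on the level of vertex sets, takes the standard edges of $LG_k^+$ to the edges of $C_n^+(\vecell,\veca)$.

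First, I will write out $\Lambda_n(\veca) = \Lambda_n u(\veca)$ by direct matrix multiplication. Treating elements of $\Lambda_n = \ZZ^k\times n\ZZ$ as row vectors $(\vecm,jn)$, the shape of $u(\veca)$ in \eqref{UADEF} immediately gives
\begin{equation*}
(\vecm,jn)\,u(\veca) = (\vecm,\,\vecm\cdot\veca+jn),
\end{equation*}
hence $\Lambda_n(\veca) = \bigl\{(\vecm,\vecm\cdot\veca+jn)\col\vecm\in\ZZ^k,\,j\in\ZZ\bigr\}$. Intersecting with $\RR^k\times\{0\}$ then yields
\begin{equation*}
\Lambda_n(\veca)_0 = \bigl\{\vecm\in\ZZ^k\col \vecm\cdot\veca\equiv0\pmod n\bigr\}.
\end{equation*}

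Next, consider the group homomorphism $\phi\col\ZZ^k\to\ZZ/n\ZZ$, $\vecm\mapsto\vecm\cdot\veca\bmod n$. Its kernel is exactly $\Lambda_n(\veca)_0$, and its image is the subgroup of $\ZZ/n\ZZ$ generated by $a_1,\ldots,a_k$, which equals the whole group since $\gcd(a_1,\ldots,a_k,n)=1$ by our assumption $(\veca,n)\in\hatNN^d$. Hence $\phi$ descends to a group isomorphism $\bar\phi\col\ZZ^k/\Lambda_n(\veca)_0\xrightarrow{\sim}\ZZ/n\ZZ$, which in particular proves that $\Lambda_n(\veca)_0$ has index $n$ in $\ZZ^k$.

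It remains to check that $\bar\phi$ is a digraph isomorphism respecting edge lengths. By definition, every edge of $LG_k^+/\Lambda_n(\veca)_0$ has the form $\bigl(\vecm+\Lambda_n(\veca)_0,\,\vecm+\vece_h+\Lambda_n(\veca)_0\bigr)$ with length $\ell_h$. Applying $\bar\phi$ this becomes the ordered pair $\bigl(\vecm\cdot\veca\bmod n,\,\vecm\cdot\veca+a_h\bmod n\bigr)$, which is exactly an edge of $C_n^+(\vecell,\veca)$ of length $\ell_h$ by the defining construction of the circulant digraph. Conversely every edge of $C_n^+(\vecell,\veca)$ is obtained in this way, so $\bar\phi$ is a quasimetric digraph isomorphism and the lemma follows.

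There is no substantive obstacle here; the only thing to be careful about is matching the row/column-vector conventions and tracking how $u(\veca)$ acts, so that the coordinate projection $Y\mapsto Y_0$ and the homomorphism $\phi$ line up correctly.
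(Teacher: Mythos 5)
Your proposal is correct and follows essentially the same route as the paper: you realize $\Lambda_n(\veca)_0$ as the kernel of the homomorphism $\vecm\mapsto\vecm\cdot\veca\bmod n$ (surjective by coprimality of $(\veca,n)$), deduce the index $n$, and check that the induced isomorphism $\ZZ^k/\Lambda_n(\veca)_0\to\ZZ/n\ZZ$ sends $\vece_h$-edges to $a_h$-edges of length $\ell_h$. The only difference is cosmetic: you spell out the matrix computation and the surjectivity step, which the paper leaves implicit.
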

\begin{proof}
An integer vector $\vecm\in\Z^k$ lies in
$\Lambda_n(\veca)_0$ if and only if $(\vecm,0)\in \Lambda_n(\veca)$,
and this holds if and only if $\vecm\cdot\veca\equiv0\minmod n$.
In other words $\Lambda_n(\veca)_0$ is the kernel of the homomorphism 
$\vecm\mapsto\vecm\cdot\veca\minmod n$ from $\Z^k$ onto $\Z/n\Z$.
Hence $\Lambda_n(\veca)_0$ is indeed a sublattice of $\Z^k$ of index $n$,
and %
the map just considered induces an isomorphism
$J:\Z^k/\Lambda_n(\veca)_0\overset\sim\to\Z/n\Z$.
Note that $J(\vece_h+\Lambda_n(\veca)_0)=a_h\minmod n$;
hence the edge set of $LG_k^+/\Lambda_n(\veca)_0$ is
\begin{align}
\bigl\{(J^{-1}(j),J^{-1}(j+a_h))\col j\in\Z/n\Z,\: h=1,\ldots,k\bigr\},
\end{align}
where the length of any edge $(J^{-1}(j),J^{-1}(j+a_h))$ is $\ell_h$.
Hence $J$ yields an isomorphism between the digraphs
$LG_{k}^+/\Lambda_n(\veca)_0$ and $C_n^+(\vecell,\veca)$,
preserving the quasimetric.
\end{proof}

\subsection{Undirected lattice graphs}

The discussion of the previous section applies with very small changes to the undirected lattice graph $LG_{k}=(\ZZ^{k},E)$, where the edge set $E$ is the same as before but the edges are considered without orientation. 

The metric on $LG_k$ is defined as for $LG_k^+$, and now the distance 
between vertices $\vecm,\vecn\in LG_k$ is given by
\begin{align}
d(\vecm,\vecn)=(\vecn-\vecm)_+\cdot\vecell,
\end{align}
where we denote $\vecz_+:=(|z_1|,\ldots,|z_k|)$ for any 
$\vecz=(z_1,\ldots,z_k)\in\R^k$.
Furthermore if $\Lambda$ is a sublattice of $\Z^k$ then the
distance between vertices $\vecm+\Lambda$ and $\vecn+\Lambda$ in $LG_k/\Lambda$ is given by
\begin{align}\label{LGQUOTDISTDEF}
d\bigl(\vecm+\Lambda,\vecn+\Lambda\bigr)=\min\bigl\{\vecz_+\cdot\vecell\col
\vecz\in\vecn-\vecm+\Lambda\bigr\}.
\end{align}
Now take $(\veca,n)=(a_1,\ldots,a_k,n)\in\widehat\NN^d$ with
$0<a_1<\ldots<a_k\leq \frac{n}{2}$, and recall the definitions
\eqref{KNADEF} and \eqref{UADEF}.
\begin{lem}\label{GRAPHISOLEM}
The metric graphs $LG_k/\Lambda_n(\veca)_0$ and $C_n(\vecell,\veca)$ are 
isomorphic.
\end{lem}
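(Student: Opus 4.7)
The plan is to reuse the isomorphism $J:\Z^k/\Lambda_n(\veca)_0\overset{\sim}{\to}\Z/n\Z$, $\vecm+\Lambda_n(\veca)_0\mapsto\vecm\cdot\veca\bmod n$, constructed in the proof of Lemma \ref{DIGRAPHISOLEM}. The claim is that the same $J$ provides the desired metric graph isomorphism in the undirected setting; the fact that $\Lambda_n(\veca)_0$ is a sublattice of $\Z^k$ of index $n$ and that $J$ is a group isomorphism is already established there.

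First I would check that $J$ sends the edge set of $LG_k/\Lambda_n(\veca)_0$ bijectively onto the edge set of $C_n(\vecell,\veca)$, preserving edge lengths. The (unoriented) edges of $LG_k/\Lambda_n(\veca)_0$ are the pairs $\{\vecm+\Lambda_n(\veca)_0,\,\vecm+\vece_h+\Lambda_n(\veca)_0\}$ with length $\ell_h$. Since $J(\vece_h+\Lambda_n(\veca)_0)=a_h\bmod n$, these map to the pairs $\{j,\,j+a_h\bmod n\}$ for $j\in\Z/n\Z$ and $h=1,\ldots,k$, which is exactly the edge set of $C_n(\vecell,\veca)$, because the defining condition $|i-j|\equiv a_h\bmod n$ is the same as $j-i\equiv\pm a_h\bmod n$. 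The hypothesis $a_k\leq n/2$ ensures that the $2k$ shifts $\pm a_1,\ldots,\pm a_k$ produce distinct neighbours of each vertex (with the extremal case $a_k=n/2$ collapsing the two shifts $\pm a_k$ to a single edge, consistent with $C_n(\vecell,\veca)$ being $(2k-1)$-regular there).

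Next I would verify that $J$ is an isometry. By \eqref{LGQUOTDISTDEF} the distance in $LG_k/\Lambda_n(\veca)_0$ from $\vecm+\Lambda$ to $\vecn+\Lambda$ equals $\min\{\vecz_+\cdot\vecell:\vecz\in\vecn-\vecm+\Lambda_n(\veca)_0\}$. Each such $\vecz=(z_1,\ldots,z_k)\in\Z^k$ encodes a walk in $C_n(\vecell,\veca)$ from $J(\vecm+\Lambda)$ to $J(\vecn+\Lambda)$ that traverses the generator $\pm a_h$ exactly $|z_h|$ times (with sign $\operatorname{sgn}(z_h)$), and this walk has total length $\vecz_+\cdot\vecell$. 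Conversely, every walk in $C_n(\vecell,\veca)$ between these vertices lifts to such a vector $\vecz\in\vecn-\vecm+\Lambda_n(\veca)_0$. Hence the two minima coincide and $J$ is an isometry.

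No deeper obstacle is expected: the only point requiring care is the bookkeeping in the boundary case $a_k=n/2$, where $\pm\vece_k$ may give the same edge after the quotient, but this is easy to track. Otherwise the argument is a direct adaptation of Lemma \ref{DIGRAPHISOLEM}, with the undirected metric formula \eqref{LGQUOTDISTDEF} replacing the directed formula \eqref{LGPQUOTDISTDEF}.
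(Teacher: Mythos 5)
Your proof is correct and is exactly what the paper intends: the paper's own "proof" of this lemma is the single remark that it is the same as for Lemma \ref{DIGRAPHISOLEM}, and you have simply carried out that adaptation, reusing the isomorphism $J$ and replacing the directed distance formula \eqref{LGPQUOTDISTDEF} by the undirected one \eqref{LGQUOTDISTDEF}. Your extra bookkeeping for the case $a_k=\frac n2$ (and the resulting $(2k-1)$-regularity) is a welcome, correct detail that the paper leaves implicit.
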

The proof is the same as for Lemma \ref{DIGRAPHISOLEM}.

\subsection{Diameters}

Let $\Lambda$ be a sublattice of $\Z^k$ of full rank (viz., of finite index).
In view of the definition of the distance on $LG_{k}/\Lambda$, we have for the diameter
\begin{align}\label{DIAMLG}
\diam(LG_k/\Lambda)=
\max_{\vecm\in\Z^k/\Lambda}\min\bigl\{\vecz_+\cdot\vecell\col\vecz\in\vecm+\Lambda\bigr\}.
\end{align}
We define a corresponding diameter for the continuous torus $\R^k/\Lambda$:
\begin{align}\label{DIAMRKK}
\diam_\vecell(\R^k/\Lambda)=
\sup_{\vecy\in\R^k/\Lambda}\min\bigl\{\vecz_+\cdot\vecell\col\vecz\in\vecy+\Lambda\bigr\}.
\end{align}
This is the maximal distance between any two points on $\R^k/\Lambda$,
when distance is measured in the ``$\vecell$-weighted $\ell^1$-metric'',
i.e.\ we define the distance between any two points
$\vecx+\Lambda$ and $\vecy+\Lambda$ on $\R^k/\Lambda$ as the minimum of
$\vecz_+\cdot\vecell$ taken over all $\vecz\in\vecy-\vecx+\Lambda$.

Similarly for the directed graph $LG_{k}^+/\Lambda$ we have
\begin{equation}\label{DIAMLGP}
\diam(LG_{k}^+/\Lambda)=\max_{\vecm\in\ZZ^k/\Lambda} 
\min \big((\vecm+\Lambda)\cap \ZZ_{\geq 0}^k \big)\cdot\vecell .
\end{equation}
We define a corresponding directed diameter for the continuous torus $\RR^k/\Lambda$:
\begin{equation}\label{DIAMRKKP}
\diam_\vecell^+(\RR^k/\Lambda)=\sup_{\vecy\in\RR^k/\Lambda}
\min \big((\vecy+\Lambda)\cap \RR_{\geq 0}^k \big)\cdot\vecell .
\end{equation}
This is the maximal distance between any two points on $\R^k/\Lambda$,
when distance is measured in the $\vecell$-weighted $\ell^1$-metric,
and we only allow paths with non-negative components.

Recall that we write $\vece=(1,\ldots,1)\in\R^k$.

\begin{lem}\label{DIAMDISCRCONTLEM}
Let $(\veca,n)=(a_1,\ldots,a_k,n)\in\hatNN^d$ with $0<a_1<\cdots<a_{k}<n$.
Then
\begin{equation}\label{DIAMDISCRCONTLEMRES1}
\diam\bigl(LG_{k}^+/\Lambda_n(\veca)_0\bigr)
=\diam_\vecell^+\bigl(\RR^k/\Lambda_n(\veca)_0\bigr)-\vece\cdot\vecell.
\end{equation}
If furthermore $a_k\leq\frac n2$ then
\begin{equation}\label{DIAMDISCRCONTLEMRES2}
\diam_\vecell\bigl(\RR^k/\Lambda_n(\veca)_0\bigr)-\frac{\vece\cdot\vecell}{2}
\leq \diam\bigl(LG_{k}/\Lambda_n(\veca)_0\bigr)\leq 
\diam_\vecell\bigl(\RR^k/\Lambda_n(\veca)_0\bigr).
\end{equation}
\end{lem}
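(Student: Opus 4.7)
My plan is to prove each of the four inequalities separately. The key observation is that for $\vecm \in \ZZ^k$ we have $\vecm + \Lambda_n(\veca)_0 \subseteq \ZZ^k$, so in (\ref{LGQUOTDISTDEF}) or (\ref{LGPQUOTDISTDEF}) the vector $\vecz$ ranges over exactly the same set as in the corresponding continuous formula (\ref{DIAMRKK}) or (\ref{DIAMRKKP}) when $\vecy = \vecm$. Hence the graph distance and continuous distance from $\vecnull + \Lambda_n(\veca)_0$ to $\vecm + \Lambda_n(\veca)_0$ coincide at every integer point, and taking max over $\vecm \in \ZZ^k$ inside sup over $\vecy \in \RR^k$ yields $\diam(LG_k/\Lambda_n(\veca)_0) \leq \diam_\vecell(\RR^k/\Lambda_n(\veca)_0)$ (the upper bound of (\ref{DIAMDISCRCONTLEMRES2})), and analogously $\diam(LG_k^+/\Lambda_n(\veca)_0) \leq \diam_\vecell^+(\RR^k/\Lambda_n(\veca)_0)$ for free.

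To upgrade the latter to the identity (\ref{DIAMDISCRCONTLEMRES1}), I would prove matching upper and lower bounds for $\diam_\vecell^+(\RR^k/\Lambda_n(\veca)_0)$ in terms of $\diam(LG_k^+/\Lambda_n(\veca)_0) + \vece \cdot \vecell$. For the upper direction, given $\vecy \in \RR^k$ set $\vecm := \lfloor \vecy \rfloor$ (coordinatewise floor) so that $\vecy - \vecm \in [0,1)^k$; if $\vecz_0 \in (\vecm + \Lambda_n(\veca)_0) \cap \ZZ_{\geq 0}^k$ achieves the minimum in (\ref{LGPQUOTDISTDEF}) at $\vecm$, then $\vecz_0 + (\vecy - \vecm)$ lies in $(\vecy + \Lambda_n(\veca)_0) \cap \RR_{\geq 0}^k$ and its $\vecell$-inner product is at most $\vecz_0 \cdot \vecell + \vece \cdot \vecell \leq \diam(LG_k^+/\Lambda_n(\veca)_0) + \vece \cdot \vecell$. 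For the lower direction, pick $\vecm$ realizing the graph diameter and let $\vecy_\ve := \vecm + (1-\ve)\vece$ for small $\ve > 0$; every element of $(\vecy_\ve + \Lambda_n(\veca)_0) \cap \RR_{\geq 0}^k$ has the form $\vecv + (1-\ve)\vece$ with $\vecv \in \vecm + \Lambda_n(\veca)_0 \subseteq \ZZ^k$, and non-negativity forces $\vecv_i \geq \ve - 1$, which by integrality improves to $\vecv_i \geq 0$. Hence $\vecv \in (\vecm + \Lambda_n(\veca)_0) \cap \ZZ_{\geq 0}^k$, the minimum of $\vecw \cdot \vecell$ over such $\vecw$ equals $\diam(LG_k^+/\Lambda_n(\veca)_0) + (1-\ve) \vece \cdot \vecell$, and letting $\ve \to 0^+$ completes the bound.

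It remains to prove the lower bound of (\ref{DIAMDISCRCONTLEMRES2}). Given $\vecy \in \RR^k$ I would choose $\vecm \in \ZZ^k$ with $|\vecy_i - \vecm_i| \leq \tfrac{1}{2}$ for each $i$ (a coordinatewise nearest integer), and let $\vecz \in \vecm + \Lambda_n(\veca)_0$ attain the minimum in (\ref{LGQUOTDISTDEF}) at $\vecm$. Then $\vecz + (\vecy - \vecm) \in \vecy + \Lambda_n(\veca)_0$, and the coordinatewise triangle inequality $|\vecz_i + \vecy_i - \vecm_i| \leq |\vecz_i| + \tfrac{1}{2}$ gives $(\vecz + (\vecy - \vecm))_+ \cdot \vecell \leq \vecz_+ \cdot \vecell + \tfrac{1}{2} \vece \cdot \vecell \leq \diam(LG_k/\Lambda_n(\veca)_0) + \tfrac{1}{2} \vece \cdot \vecell$; taking sup over $\vecy$ yields the claim. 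I expect the main subtlety to be the directed lower bound above: the perturbation $\vecy_\ve = \vecm + (1-\ve)\vece$ must be engineered so that non-negativity together with integrality of the lattice elements forces each $\vecv_i \geq 0$, rather than just $\geq \ve - 1$, thereby producing the full additive contribution $\vece \cdot \vecell$.
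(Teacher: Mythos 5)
Your proposal is correct and follows essentially the same route as the paper: the floor/nearest-integer decomposition $\vecy=\vecm+\vecz$ together with the integrality observation that $z_i\in[0,1)$ (resp.\ $|y_i-m_i|\le\frac12$) converts the continuous non-negativity (resp.\ $\ell^1$) condition into the discrete one. The only cosmetic difference is that the paper obtains \eqref{DIAMDISCRCONTLEMRES1} in one stroke from the exact identity $(\vecy+\Lambda)\cap\R_{\geq0}^k=\vecz+\bigl((\vecm+\Lambda)\cap\Z_{\geq0}^k\bigr)$ and then decouples the suprema, whereas you prove the two inequalities separately (using the $\ve$-perturbation $\vecm+(1-\ve)\vece$ for the lower one), which amounts to the same argument.
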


\begin{proof}
Set $\Lambda=\Lambda_n(\veca)_0$. 
Let $\vecy\in\R^k$ be arbitrary.
Set $\vecm:=(\lfloor y_1\rfloor,\ldots,\lfloor y_k\rfloor)\in\Z^k$,
so that $\vecy=\vecm+\vecz$ for some vector $\vecz\in[0,1)^k$.
Using $\Lambda\subset\Z^k$ we have
\begin{align}
(\vecy+\Lambda)\cap\R_{\geq0}^k=\vecz+((\vecm+\Lambda)\cap\Z_{\geq0}^k),
\end{align}
and thus
\begin{align}
\min\bigl((\vecy+\Lambda)\cap\R_{\geq0}^k\bigr)\cdot\vecell
=\vecz\cdot\vecell+\min\bigl((\vecm+\Lambda)\cap\Z_{\geq0}^k\bigr)\cdot\vecell.
\end{align}
Taking the supremum over all $\vecy\in\R^k$, or equivalently
the supremum over all $\langle\vecm,\vecz\rangle\in\Z^k\times[0,1)^k$,
we obtain
\begin{align}
\diam_\vecell^+(\R^k/\Lambda)
=\sup_{\vecz\in[0,1)^k}\vecz\cdot\vecell+\diam(LG_k^+/\Lambda)
=\vece\cdot\vecell+\diam(LG_k^+/\Lambda),
\end{align}
and we have proved \eqref{DIAMDISCRCONTLEMRES1}.

We next turn to \eqref{DIAMDISCRCONTLEMRES2}.
The right inequality in \eqref{DIAMDISCRCONTLEMRES2} is obvious from 
\eqref{DIAMLG} and \eqref{DIAMRKK}.
To prove the left inequality, let $\vecy=(y_1,\ldots,y_k)$ be an 
arbitrary point in $\R^k$. Then there is an integer vector
$\vecm=(m_1,\ldots,m_k)$ satisfying $|m_j-y_j|\leq\frac12$
for $j=1,\ldots,k$.
Now for any $\vecz\in\vecm+\Lambda$ there is a point
$\vecz'\in\vecy+\Lambda$ satisfying $|z_j'-z_j|\leq\frac12$ for all $j$.
Hence
\begin{align}
\min\bigl\{\vecz_+\cdot\vecell\col\vecz\in\vecy+\Lambda\bigr\}
\leq\min\bigl\{\vecz_+\cdot\vecell\col\vecz\in\vecm+\Lambda\bigr\}
+\frac{\vece\cdot\vecell}2
\leq\diam(LG_k/\Lambda)+\frac{\vece\cdot\vecell}2.
\end{align}
Since this holds for all $\vecy\in\R^k$ we obtain the left inequality
in \eqref{DIAMDISCRCONTLEMRES2}.
\end{proof}

Now set
\begin{equation}
D_n(\vecell)=
\diag\bigl(\Pi^{-1/k}\ell_1,\ldots,\Pi^{-1/k}\ell_k\bigr)\in\GL(k,\R),
\qquad\text{with }\:\Pi=n \ell_1\cdots\ell_k.
\end{equation}
We have $\det D_n(\vecell)=n^{-1}$, and hence
\begin{align}
L_{n,\veca,\vecell}:=\Lambda_n(\veca)_0 D_n(\vecell)
\end{align}
is a lattice in $\R^k$ of covolume one,
viz.\ $L_{n,\veca,\vecell}\in X_k$.
It is also clear from the definition \eqref{DIAMRKK} that this
transformation translates $\diam_\vecell(\R^k/\Lambda_n(\veca)_0)$
into an unweighted (or ``$\vece$-weighted'') $\ell^1$-diameter, viz.\ 
\begin{align}
\diam_\vecell(\R^k/\Lambda_n(\veca)_0)
=\Pi^{\frac1k}\diam_\vece\bigl(\R^k/L_{n,\veca,\vecell}
\bigr).
\end{align}
Similarly
\begin{align}
\diam_\vecell^+(\R^k/\Lambda_n(\veca)_0)
=\Pi^{\frac1k}\diam_\vece^+\bigl( \R^k/L_{n,\veca,\vecell}\bigr).
\end{align}

Combining Lemma \ref{DIGRAPHISOLEM}, Lemma \ref{GRAPHISOLEM}
and Lemma \ref{DIAMDISCRCONTLEM}, we have now proved:
\begin{prop}\label{DIAMFINALRELPROP}
Let $(\veca,n)=(a_1,\ldots,a_k,n)\in\hatNN^d$ with $0<a_1<\cdots<a_{k}<n$.
Then
\begin{equation}
\diam C_n^+(\vecell,\veca)
=\Pi^{\frac1k}\diam_\vece^+\bigl( \R^k/L_{n,\veca,\vecell}\bigr)
-\vece\cdot\vecell.
\end{equation}
If furthermore $a_k\leq\frac n2$ then
\begin{equation}
\Pi^{\frac1k}\diam_\vece\bigl(\R^k/L_{n,\veca,\vecell}\bigr)
-\frac{\vece\cdot\vecell}{2}
\leq\diam C_n(\vecell,\veca)
\leq
\Pi^{\frac1k}\diam_\vece\bigl(\R^k/L_{n,\veca,\vecell}\bigr).
\end{equation}
\end{prop}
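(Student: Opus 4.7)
The plan is to chain the three preceding lemmas together with one linear change of coordinates; no genuinely new ingredient is needed, so I do not expect a substantial obstacle. The only nontrivial piece of bookkeeping is the rescaling step at the end.

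First, I invoke Lemma \ref{DIGRAPHISOLEM} (for the directed case) and Lemma \ref{GRAPHISOLEM} (for the undirected case) to replace the diameters of $C_n^+(\vecell,\veca)$ and $C_n(\vecell,\veca)$ by the diameters of the quotient lattice graphs $LG_k^+/\Lambda_n(\veca)_0$ and $LG_k/\Lambda_n(\veca)_0$, respectively. These identifications are immediate, since the isomorphisms constructed in those lemmas preserve the assigned edge lengths and hence induce isometries of the (quasi-)metric graphs. Next, Lemma \ref{DIAMDISCRCONTLEM} converts the two discrete diameters into continuous $\vecell$-weighted diameters of the torus $\R^k/\Lambda_n(\veca)_0$, with correction terms $\vece\cdot\vecell$ and $\tfrac12\vece\cdot\vecell$ in the directed and undirected cases, respectively (the latter under the hypothesis $a_k\leq\tfrac n2$).

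The remaining task, and the closest thing to a key step, is to pass from the $\vecell$-weighted diameters of $\R^k/\Lambda_n(\veca)_0$ to the $\vece$-weighted diameters of the normalized torus $\R^k/L_{n,\veca,\vecell}$. Since $D_n(\vecell)=\diag(\Pi^{-1/k}\ell_1,\ldots,\Pi^{-1/k}\ell_k)$ is a diagonal matrix with strictly positive entries, the map $\vecz\mapsto\vecz D_n(\vecell)$ is a bijection of $\R^k$ sending $\Lambda_n(\veca)_0$ to $L_{n,\veca,\vecell}$ and preserving the componentwise non-negativity condition $\vecz\in\R_{\geq0}^k$. For any $\vecz\in\R^k$ one checks directly that
\begin{equation*}
\vecz_+\cdot\vecell \;=\; \sum_{h=1}^k|z_h|\ell_h \;=\; \Pi^{1/k}\sum_{h=1}^k\bigl|z_h\Pi^{-1/k}\ell_h\bigr| \;=\; \Pi^{1/k}\bigl(\vecz D_n(\vecell)\bigr)_+\cdot\vece.
\end{equation*}
Taking the minimum over cosets and the supremum over $\R^k/\Lambda_n(\veca)_0$ in the definitions \eqref{DIAMRKK} and \eqref{DIAMRKKP} then yields
\begin{equation*}
\diam_\vecell(\R^k/\Lambda_n(\veca)_0)=\Pi^{1/k}\diam_\vece(\R^k/L_{n,\veca,\vecell}),\qquad \diam_\vecell^+(\R^k/\Lambda_n(\veca)_0)=\Pi^{1/k}\diam_\vece^+(\R^k/L_{n,\veca,\vecell}).
\end{equation*}
Substituting these two identities into the relations supplied by Lemma \ref{DIAMDISCRCONTLEM} immediately delivers both conclusions of the proposition.
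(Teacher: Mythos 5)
Your proposal is correct and follows essentially the same route as the paper: combine Lemmas \ref{DIGRAPHISOLEM}, \ref{GRAPHISOLEM} and \ref{DIAMDISCRCONTLEM}, then convert the $\vecell$-weighted torus diameters into $\vece$-weighted ones via the diagonal map $D_n(\vecell)$, exactly as in the displayed rescaling identities preceding the proposition. Your explicit verification of $\vecz_+\cdot\vecell=\Pi^{1/k}(\vecz D_n(\vecell))_+\cdot\vece$ and the preservation of $\R_{\geq0}^k$ is just a spelled-out version of what the paper declares ``clear from the definition.''
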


\subsection{Diameters and covering radii}

We next note that, for an arbitrary $k$-dimensional 
lattice $\Lambda\subset\R^k$,
the $\ell^1$-diameters $\diam_\vece(\R^k/\Lambda)$ and 
$\diam_\vece^+(\R^k/\Lambda)$ can be interpreted as the covering radius
with respect to $\Lambda$ of the simplex $\Delta$ and the
cross-polytope $\fP$, respectively.
(Recall \eqref{simplex} and \eqref{poly}.)

\begin{lem}\label{DIAMASCOVRADLEM}
For any lattice $\Lambda\subset\R^k$ of full rank we have
\begin{align}\label{DIAMASCOVRADLEMRES1}
\diam_\vece(\R^k/\Lambda)=\rho(\fP,\Lambda)
\end{align}
and
\begin{align}\label{DIAMASCOVRADLEMRES2}
\diam_\vece^+(\R^k/\Lambda)=\rho(\Delta,\Lambda).
\end{align}
\end{lem}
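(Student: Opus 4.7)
The plan is to prove both equalities by reformulating the inner minimization in the definitions of $\diam_\vece$ and $\diam_\vece^+$ as a membership condition for suitable dilates of $\fP$ and $\Delta$, and then compare with the definition of $\rho$.

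First I would handle \eqref{DIAMASCOVRADLEMRES1}. For $\vecy\in\R^k$ set $f(\vecy):=\min\{\vecz_+\cdot\vece\col\vecz\in\vecy+\Lambda\}$; note $\vecz_+\cdot\vece=\sum_j|z_j|$, which is exactly the $\ell^1$-norm, so the condition $\vecz\in r\fP$ is equivalent to $\vecz_+\cdot\vece\le r$. The min exists because the lattice points in any bounded region form a finite set. Now I would observe the key equivalence: for any $r\ge0$,
\begin{equation*}
f(\vecy)\le r \iff \exists\,\lambda\in\Lambda\text{ with }\vecy+\lambda\in r\fP \iff \vecy\in r\fP-\Lambda = r\fP+\Lambda,
\end{equation*}
where the last step uses that $\Lambda$ is a subgroup. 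Taking the supremum over $\vecy\in\R^k/\Lambda$ (which is in fact a maximum, since $f$ descends to a continuous function on the compact torus), one obtains $\diam_\vece(\R^k/\Lambda)\le r$ iff $r\fP+\Lambda=\R^k$. Taking the infimum of such $r$ yields exactly $\rho(\fP,\Lambda)$ on the right-hand side; compactness of $\fP$ together with the fact that $r\fP+\Lambda$ is closed ensures the infimum is attained, so the two extremal values agree.

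For \eqref{DIAMASCOVRADLEMRES2} I would repeat the same argument with the only difference that $\vecz$ is constrained to lie in $\R_{\geq 0}^k$. The set $(\vecy+\Lambda)\cap\R_{\geq 0}^k$ is nonempty for every $\vecy$ since $\Lambda$ is of full rank (add any lattice vector with sufficiently large positive entries), and the minimum is again attained by finiteness. The condition $\vecz\in r\Delta$ is equivalent to $\vecz\ge\bn$ together with $\vecz\cdot\vece\le r$, so
\begin{equation*}
\min\bigl((\vecy+\Lambda)\cap\R_{\ge0}^k\bigr)\cdot\vece\le r \iff \exists\,\lambda\in\Lambda\text{ with }\vecy+\lambda\in r\Delta \iff \vecy\in r\Delta+\Lambda.
\end{equation*}
Taking the supremum over $\vecy$ and then the infimum over $r$ as before gives $\diam_\vece^+(\R^k/\Lambda)=\rho(\Delta,\Lambda)$.

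The argument is essentially an unpacking of definitions; the only point requiring a little care is the passage from ``sup'' to ``attained max'' and from ``inf'' to ``attained min'', which follows from the compactness of the torus, the continuity of $f$, and the closedness of $rK+\Lambda$ for a compact set $K$. I do not expect any real obstacle; the lemma is the natural bridge between the torus-diameter point of view used in this section and the lattice-covering point of view needed to invoke the equidistribution results from \cite{Marklof10}.
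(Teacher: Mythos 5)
Your proposal is correct and is essentially the paper's own argument: both reduce the inner minimum to the membership condition $\vecz\in r\fP$ (resp.\ $\vecz\in r\Delta$), identify the supremum over $\vecy$ with the covering condition $r\fP+\Lambda=\R^k$ (the paper phrases this complementarily, via the supremum of those $R$ for which some translate of $\Lambda$ misses $R\fP$), and conclude by the monotonicity in $r$ built into the definition of $\rho$. The extra remarks about attainment of the minimum and nonemptiness of $(\vecy+\Lambda)\cap\R_{\geq0}^k$ are fine and do not change the substance.
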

\begin{proof}
Note that, for any $\vecy\in\R^k$,
\begin{align}
\min\bigl\{\vecz_+\cdot\vece\col\vecz\in\vecy+\Lambda\bigr\}
&=\sup\bigl\{R>0\col R\fP\cap(\vecy+\Lambda)=\emptyset\bigr\}.
\end{align}
Hence by \eqref{DIAMRKK}, $\diam_\vece(\R^k/\Lambda)$ equals the supremum 
of all $R>0$ such that there exists a translate of $\Lambda$ which is disjoint
from $R\fP$.
One sees that this holds if and only if $R\fP+\Lambda\neq\R^k$.
Hence
\begin{align}
\diam_\vece(\R^k/\Lambda)=\sup\bigl\{R>0\col R\fP+\Lambda\neq\R^k\bigr\}
=\rho(\fP,\Lambda).
\end{align}
The proof of \eqref{DIAMASCOVRADLEMRES2} is the same,
using the fact that
\begin{align}
\min\bigl((\vecy+\Lambda)\cap\R_{\geq0}^k\bigr)\cdot\vece
=\sup\bigl\{R>0\col R\Delta\cap(\vecy+\Lambda)=\emptyset\bigr\}
\end{align}
for all $\vecy\in\R^k$.
\end{proof}

\subsection{Equidistribution}

The key to the proof of Theorems \ref{Thm1} and \ref{Thm2} is the following 
equidistribution theorem, %
which is a consequence of Theorem 7 in \cite{Marklof10}. 

\begin{thm}\label{equiThm3}
Let $\vecell=(\ell_1,\ldots,\ell_k)\in\RR_{>0}^k$, and let
$\scrD\subset\{ \vecx\in\RR^d: 0<x_1,\ldots,x_{d-1}\leq x_d\}$ 
be a bounded subset with boundary of Lebesgue measure zero.
Then for any bounded continuous function $f:X_k\to\RR$,
\begin{align}\label{equiThm3res}
\lim_{T\to\infty}\frac1{T^d}\sum_{(\veca,n)\in\widehat\NN^d\cap T\scrD} 
f\bigl(L_{n,\veca,\vecell}\bigr)
=\frac{\vol(\scrD)}{\zeta(d)} \int_{L\in X_k} f(L) \, d\mu_0(L).
\end{align}
\end{thm}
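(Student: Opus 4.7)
The plan is to reduce \eqref{equiThm3res} directly to the equidistribution theorem \cite[Thm.~7]{Marklof10}, by absorbing all dependence on the weight vector $\vecell$ into the test function via a fixed $\SL(k,\R)$-transformation. The key algebraic observation is that the diagonal matrix $D_n(\vecell)$ splits as
\begin{align*}
D_n(\vecell) = n^{-1/k}\,E(\vecell),
\qquad
E(\vecell):=(\ell_1\cdots\ell_k)^{-1/k}\diag(\ell_1,\ldots,\ell_k)\in\SL(k,\R),
\end{align*}
so that $L_{n,\veca,\vecell} = L_{n,\veca,\vece}\cdot E(\vecell)$; the factor $E(\vecell)$ is a fixed unimodular matrix independent of $(\veca,n)$. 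The scalar $n^{-1/k}$ has precisely the effect of rescaling $\Lambda_n(\veca)_0$ (which has covolume $n$ by Lemma \ref{DIGRAPHISOLEM}) to covolume one.

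Given this factorization, I would define the auxiliary function $\tilde f\col X_k\to\R$ by $\tilde f(L):=f(L\cdot E(\vecell))$, which is again bounded and continuous. Then
\begin{align*}
\frac1{T^d}\sum_{(\veca,n)\in\widehat\NN^d\cap T\scrD}f\bigl(L_{n,\veca,\vecell}\bigr)
=\frac1{T^d}\sum_{(\veca,n)\in\widehat\NN^d\cap T\scrD}\tilde f\bigl(L_{n,\veca,\vece}\bigr),
\end{align*}
and the right-hand side is exactly the sum to which \cite[Thm.~7]{Marklof10} applies, giving in the limit $T\to\infty$ the value $(\vol(\scrD)/\zeta(d))\int_{X_k}\tilde f\,d\mu_0$. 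Invoking right $\SL(k,\R)$-invariance of $\mu_0$ to substitute $L\mapsto L\cdot E(\vecell)^{-1}$ yields $\int_{X_k}\tilde f\,d\mu_0=\int_{X_k}f\,d\mu_0$, which completes the argument.

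The essentially all work here is in translating notation, so the main (and really only) potential obstacle is verifying that the family $\{L_{n,\veca,\vece}\}$ indeed coincides with the family of covolume-one lattices whose equidistribution is established in \cite[Thm.~7]{Marklof10}. Since by the proof of Lemma \ref{DIGRAPHISOLEM} one has $\Lambda_n(\veca)_0=\{\vecm\in\ZZ^k\col\vecm\cdot\veca\equiv0\minmod n\}$, this is precisely the lattice that arises naturally in the Frobenius problem in $d=k+1$ variables, and the hypothesis on $\scrD$ (bounded, boundary of measure zero, contained in $\{0<x_1,\ldots,x_{d-1}\leq x_d\}$) matches the admissible sets in \cite{Marklof10}; with this dictionary in place, the reduction is immediate.
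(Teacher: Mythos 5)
Your opening reduction is fine: indeed $D_n(\vecell)=n^{-1/k}E(\vecell)$ with $E(\vecell)=(\ell_1\cdots\ell_k)^{-1/k}\diag(\ell_1,\ldots,\ell_k)\in\SL(k,\R)$ fixed, so $L_{n,\veca,\vecell}=L_{n,\veca,\vece}\,E(\vecell)$ and the right $\SL(k,\R)$-invariance of $\mu_0$ removes all dependence on $\vecell$. The gap is the final step, where you assert that the sum over $\tilde f(L_{n,\veca,\vece})$ ``is exactly the sum to which \cite[Thm.~7]{Marklof10} applies''. It is not. The covolume-one lattices whose equidistribution underlies the Frobenius analysis in \cite{Marklof10} are, in the present notation, the lattices $L_{n,\veca,\veca}=(na_1\cdots a_k)^{-1/k}\,\Lambda_n(\veca)_0\,\diag(a_1,\ldots,a_k)$: the congruence lattice $\{\vecm\in\Z^k\col\vecm\cdot\veca\equiv0\minmod n\}$ rescaled \emph{anisotropically} by the weights $a_1,\ldots,a_k$, which vary with the summation point (this is exactly the remark in Section \ref{secEx} that the Frobenius case corresponds to $\vecell(\vecx)\equiv\vecx$, not to $\vecell=\vece$). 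That $\veca$-dependent scaling cannot be absorbed into a single fixed unimodular matrix, so the family $\{L_{n,\veca,\vece}\}$ does not coincide with the family treated there, and your closing ``dictionary'' claim fails.

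Moreover, \cite[Thm.~7]{Marklof10} is not a statement about $X_k$-valued lattices at all but an equidistribution statement for test functions $f(\vecx,M)$ with $M$ ranging in (a subset of) $\Gamma\backslash\SL(d,\R)$ — this is visible in how the paper invokes it, namely with the test function $f(\vecx,M)=f_0(MD'_{x_d}(\vecell))$ in the proof of Theorem \ref{equiThm2}. Extracting \eqref{equiThm3res} from it is precisely where the work lies: one must (i) write $L_{n,\veca,\vecell}=\bigl(\Z^{d}u(n^{-1}\veca)D'_n(\vecell)\bigr)_0$, show that the points $u(n^{-1}\veca)D'_n(\vecell)$ lie on $\Gamma\backslash\Gamma H$ and equidistribute there with respect to $\mu_H$ (Theorem \ref{equiThm2}), and (ii) apply this with the section-type test function $f_0(M)=f\bigl((\Z^{d}M)_0\bigr)$, checking that it is well defined, bounded and continuous on $\Gamma\backslash\Gamma H$, and then compute $\int_{\Gamma\backslash\Gamma H}f_0\,d\mu_H=\int_{X_k}f\,d\mu_0$ via the disintegration \eqref{siegel2}. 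None of this is mere translation of notation. If you want to salvage your $E$-matrix idea, a variant is conceivable: use the joint equidistribution (in $(\veca,n)/T$ and the lattice) of the weighted lattices $L_{n,\veca,\veca}$ together with the scale-invariant correction $E(\vecx)$, noting $L_{n,\veca,\vece}=L_{n,\veca,\veca}E(\veca/T)^{-1}$; but then the test function $(\vecx,L)\mapsto f\bigl(LE(\vecx)^{-1}E(\vecell)\bigr)$ degenerates as $x_j\to0$ on the boundary of the admissible cone, so one would still need an approximation argument (exhausting $\scrD$ by subsets bounded away from $\{x_j=0\}$), i.e.\ genuinely more than a change of notation.
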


In order to prove Theorem \ref{equiThm3} we first prove
Theorem \ref{equiThm2} below, which is a corollary 
of \cite[Thm.\ 7]{Marklof10}.
Set $G=\SL(d,\R)$, $G_0=\SL(k,\R)$ and $\Gamma=\SL(d,\Z)$, $\Gamma_0=\SL(k,\Z)$.
For any $M\in G_0$, $\Z^kM$ is a $k$-dimensional lattice of covolume one
in $\R^k$, and this gives an identification of $X_k$ with the homogeneous
space $\Gamma_0\backslash G_0$.
Then $\mu_0$ is identified with the unique $G_0$-right invariant
probability measure on $\Gamma_0\backslash G_0$;
we also use the same notation $\mu_0$ for the corresponding 
Haar measure on $G_0$.
Let $H$ be the following subgroup of $G$:
\begin{equation}
H=\bigg\{ M=\begin{pmatrix} A & \trans\vecnull \\ \vecc & 1 \end{pmatrix} : A\in G_0, \; \vecc\in\RR^k\bigg\}.
\end{equation}
We normalize the Haar measure $\mu_H$ of $H$ so that it becomes a probability measure on $\Gamma\backslash\Gamma H$; explicitly:
\begin{equation} \label{siegel2}
d\mu_H(M) = d\mu_0(A)\, d\vecc, \qquad M=\begin{pmatrix} A & \trans\vecnull \\ \vecc & 1 \end{pmatrix},
\end{equation}
where $d\vecc$ denotes the standard Lebesgue measure on $\R^k$.
We set
\begin{align}
D_n'(\vecell):=\matr{D_n(\vecell)}{\trans\bn}{\bn}{n}
=\diag\bigl(\Pi^{-1/k}\ell_1,\ldots,\Pi^{-1/k}\ell_k,n\bigr)\in G
\qquad (\Pi=n \ell_1\cdots\ell_k).
\end{align}
\begin{thm}\label{equiThm2} 
\rule[0ex]{0ex}{0ex}%
\begin{enumerate}
\item[(i)] For every $(\veca,n)\in\hatNN^d$ we have
$u(n^{-1}\veca)D_n'(\vecell)\in\Gamma H$.
\item[(ii)] For any $\vecell\in\RR_{>0}^k$, any bounded subset
$\scrD\subset\{ \vecx\in\RR^d: 0<x_1,\ldots,x_{d-1}\leq x_d\}$ 
with boundary of Lebesgue measure zero, 
and any bounded continuous function $f_0:\Gamma\backslash\Gamma H\to\RR$,
we have
\begin{equation}\label{eqThm2}
	\lim_{T\to\infty} \frac{1}{T^d} \sum_{(\veca,n)\in\widehat\NN^d\cap T\scrD} 
f_0\Bigl(u(n^{-1}\veca)D_n'(\vecell)\Bigr)  
	= \frac{\vol(\scrD)}{\zeta(d)} \int_{\Gamma\backslash\Gamma H} f_0(M) \, d\mu_H(M)  .
\end{equation}
\end{enumerate}
\end{thm}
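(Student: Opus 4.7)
The natural approach is to handle (i) by a direct matrix computation together with the coprimality condition built into $\hatNN^d$, and to deduce (ii) as a specialization of the equidistribution theorem \cite[Thm.~7]{Marklof10}, after first reducing to a canonical diagonal element by factoring out the dependence on $\vecell$.

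For (i), I multiply out the block product: since $u(n^{-1}\veca)$ is upper-triangular with top-right block $n^{-1}\trans\veca$, and $D_n'(\vecell)$ is diagonal with bottom-right entry $n$, a direct computation gives
\begin{align*}
u(n^{-1}\veca)\,D_n'(\vecell) = \matr{D_n(\vecell)}{\trans\veca}{\bn}{n}.
\end{align*}
The last column consists of the integer entries $a_1,\ldots,a_k,n$; by the defining coprimality $(\veca,n)\in\hatNN^d$ and the standard completion property of $\SL(d,\Z)$, there exists $\gamma\in\Gamma$ whose last column is exactly this vector. Then $\gamma^{-1}u(n^{-1}\veca)D_n'(\vecell)$ has last column $(0,\ldots,0,1)^T$ and unit determinant, which from the definition of $H$ places it in $H$; hence $u(n^{-1}\veca)D_n'(\vecell)\in\Gamma H$.

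For (ii), the key reduction is to factor the diagonal element as $D_n'(\vecell)=a(n)\,D_\vecell$, where
\begin{align*}
a(n):=\diag\bigl(n^{-1/k},\ldots,n^{-1/k},n\bigr), \qquad D_\vecell:=\diag\bigl(\ell_1',\ldots,\ell_k',1\bigr),
\end{align*}
with $\ell_h':=\ell_h(\ell_1\cdots\ell_k)^{-1/k}$ (so that $\ell_1'\cdots\ell_k'=1$). Then $D_\vecell\in H$, and from the product form \eqref{siegel2} a direct change-of-variables computation shows that right multiplication by $D_\vecell$ preserves $\mu_H$: on the $G_0$-factor it is right translation by an element of $G_0$, preserving the right-invariant $\mu_0$, and on the $\vecc$-factor it is multiplication by the determinant-one matrix $\diag(\ell_1',\ldots,\ell_k')$, preserving Lebesgue measure. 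Setting $F(x):=f_0(xD_\vecell)$, which is again bounded continuous on $\Gamma\backslash\Gamma H$, the sum in (ii) rewrites as
\begin{align*}
\sum_{(\veca,n)\in\hatNN^d\cap T\scrD} F\bigl(u(n^{-1}\veca)\,a(n)\bigr).
\end{align*}
This is precisely the form of sum controlled by \cite[Thm.~7]{Marklof10}; applying that theorem and invoking the $D_\vecell$-invariance of $\mu_H$ yields the claimed limit with constant $\vol(\scrD)/\zeta(d)$.

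The heart of the argument -- and the main technical obstacle -- is thus the invocation of \cite[Thm.~7]{Marklof10} itself, whose proof in that paper relies on Ratner-type equidistribution for the unipotent subgroup $\{u(\cdot)\}$ together with a M\"obius-type sieve to pass from summation over arbitrary integer $(\veca,n)$ to primitive ones (the source of the $1/\zeta(d)$ factor). In our setting the only extra work is the routine verification that our $a(n)$ has the contraction/expansion pattern, and our $\scrD\subset\{0<x_1,\ldots,x_{d-1}\leq x_d\}$ the geometric shape, required by the hypotheses of \cite[Thm.~7]{Marklof10}.
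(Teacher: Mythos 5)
Your part (i) is correct and is essentially the paper's argument: complete the primitive vector $(\veca,n)$ to an element of $\Gamma$ and observe that the resulting product has last column $\trans{(0,\ldots,0,1)}$ and determinant one, hence lies in $H$. Your preparatory steps for (ii) are also correct and worthwhile: the factorization $D_n'(\vecell)=a(n)D_\vecell$ with $a(n)=\diag(n^{-1/k},\ldots,n^{-1/k},n)$ and $D_\vecell\in H$, and the verification that right multiplication by $D_\vecell$ preserves $\mu_H$ (the paper leaves this invariance implicit, so making it explicit is a plus).

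The gap is in the decisive step, where you assert that $\sum_{(\veca,n)\in\hatNN^d\cap T\scrD}F\bigl(u(n^{-1}\veca)a(n)\bigr)$ is ``precisely the form of sum controlled by'' \cite[Thm.~7]{Marklof10}. It is not: as the paper's own application shows (it invokes that theorem with the $\vecx$-dependent test function $f(\vecx,M)=f_0(MD'_{x_d}(\vecell))$), the cited theorem is formulated for bounded continuous test functions on $\RR^d\times\Gamma\backslash G$, with the diagonal scaling attached to the \emph{global} parameter $T$ rather than to the coordinate $n$ of the summation variable. Two conversions are therefore still required after your reduction, and neither is supplied by factoring out $D_\vecell$: (a) one must reduce to $f_0$ (equivalently your $F$) being the restriction of a bounded continuous function on all of $\Gamma\backslash G$ --- legitimate because $\Gamma\backslash\Gamma H$ is a closed embedded submanifold, but it has to be said, since the points occurring in the cited theorem do not lie on $\Gamma\backslash\Gamma H$; and (b) one must absorb the discrepancy between the $T$-scaling and the $n$-scaling, which the paper does in one stroke via $a(T)a(x_d)=a(n)$ (with $x_d=n/T$), i.e.\ by the choice $f(\vecx,M)=f_0(MD'_{x_d}(\vecell))$; note that this choice handles the $\vecell$-dependence at the same time, so your $D_\vecell$-reduction, while correct, does not shorten the route. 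As written, your proposal proves the easy reduction to the case $\vecell=\vece$ and then asserts that case as being the literal content of the citation, which is exactly the part that still needs an argument.
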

\begin{proof}
To prove (i), note that for any $(\veca,n)\in\hatNN^d$
there exists $\gamma\in\Gamma$ such that $(\veca,n)\gamma=\vece_d$
and for this $\gamma$ we have
\begin{equation}
\trans\gamma u(n^{-1}\veca)
=\matr A{\trans\bn}{\vecc}{n^{-1}}
\end{equation}
for some $\vecc\in\R^k$ and $A\in\GL(k,\R)$ with $\det A=n$.
It follows that $\trans\gamma u(n^{-1}\veca)D_n'(\vecell)\in H$,
and hence $u(n^{-1}\veca)D_n'(\vecell)\in \Gamma H$,

Next to prove (ii), note that since $\Gamma\backslash\Gamma H$
is an embedded submanifold of $\Gamma\backslash G$,
it suffices to prove that \eqref{eqThm2} holds when $f_0$ is an
arbitrary bounded continuous  real-valued function 
\textit{on $\Gamma\backslash G$.}
But this
follows by applying Theorem 7 in \cite{Marklof10} with the test function 
$f(\vecx,M)=f_0(M D'_{x_d}(\vecell))$.
\end{proof}

\begin{proof}[Proof of Theorem \ref{equiThm3}]
We have 
\begin{align}
L_{n,\veca,\vecell}=\bigl((\Z^k\times n\Z)u(\veca)\bigr)_0 D_n(\vecell)
=\bigl(\Z^{k+1}u(n^{-1}\veca)\bigr)_0 D_n(\vecell)
=\bigl(\Z^{k+1}u(n^{-1}\veca)D_n'(\vecell)\bigr)_0.
\end{align}
Hence the left hand side of \eqref{equiThm3res} can be expressed as
\begin{align}\label{equiThm3PF1}
\lim_{T\to\infty}\frac1{T^d}\sum_{(\veca,n)\in\widehat\NN^d\cap T\scrD} 
f\Bigl(\bigl(\Z^{k+1}u(n^{-1}\veca)D_n'(\vecell)\bigr)_0\Bigr).
\end{align}
Let us now apply Theorem \ref{equiThm2} with the test function $f_0$
given by
$f_0(M):=f\bigl((\Z^{k+1}M)_0\bigr)$ for all $M\in\Gamma H$.
To see that this is well-defined, note that if
$M=\gamma\smatr A{\trans\bn}{\vecc}1$ with $\gamma\in\Gamma$ 
and $\smatr A{\trans\bn}{\vecc}1\in H$ then
\begin{align}
(\Z^{k+1}M)_0=\left(\Z^{k+1}\matr A{\trans\bn}{\vecc}1\right)_0=\Z^kA
\in X_k
\end{align}
so that $f_0(M)=f(\Z^kA)$.
The function $f_0$ is obviously bounded and $\Gamma$-left invariant;
furthermore the formula $f_0(M)=f(\Z^kA)$ just proved shows that 
$f_0$ is continuous on $H$, and hence on $\Gamma H$.
Now Theorem \ref{equiThm2} gives that the limit in \eqref{equiThm3PF1} 
equals
\begin{align}
\frac{\vol(\scrD)}{\zeta(d)}\int_{\Gamma\backslash\Gamma H}f_0(M)\,d\mu_H(M)
=\frac{\vol(\scrD)}{\zeta(d)}\int_{\Gamma_0\backslash G_0}
\int_{\R^k/\Z^k}f_0\left(\matr A{\trans\bn}{\vecc}1\right)\,d\vecc\,d\mu_0(A)
\\\notag
=\frac{\vol(\scrD)}{\zeta(d)}\int_{\Gamma_0\backslash G_0}f(\Z^kA)\,d\mu_0(A),
\end{align}
and we are done.
\end{proof}

Theorems \ref{Thm1}  and \ref{Thm2} now follow from Theorem \ref{equiThm3} 
combined with \eqref{TDCARDINALITY}, Proposition \ref{DIAMFINALRELPROP}
and Lemma~\ref{DIAMASCOVRADLEM}.
Indeed, let $\vecell\in\R_{>0}^k$ and 
$\scrD\subset\fF^+$ be given as in Theorem \ref{Thm1}.
Then Theorem \ref{equiThm3} together with \eqref{TDCARDINALITY} imply 
that if we view $L_{n,\veca,\vecell}$ as a ($X_k$-valued) random variable
defined by taking $(\veca,n)$ uniformly at random in 
$\widehat\NN^{k+1}\cap T\scrD$, then as $T\to\infty$,
$L_{n,\veca,\vecell}$ converges in
distribution to a random variable $L\in X_k$ taken according to $\mu_0$.
We next note that the functions $L\mapsto\rho(\fP,L)$ and
$L\mapsto\rho(\Delta,L)$ are continuous on $X_k$
(this is immediate from \cite[Prop.\ 4.4]{rDvF2004};
for the case of $\Delta$ it was also proved in 
\cite[Lem.\ 4, Thm.\ 9]{Marklof10}).
Hence by the continuous mapping theorem,
\begin{align*}
\rho(\fP,L_{n,\veca,\vecell})\xrightarrow[]{\textup{ d }}\rho(\fP,L)
\quad\text{and}\quad
\rho(\Delta,L_{n,\veca,\vecell})\xrightarrow[]{\textup{ d }}\rho(\Delta,L)
\qquad\text{as }\: T\to\infty.
\end{align*}
Thus by Lemma \ref{DIAMASCOVRADLEM},
and using the obvious fact that
$(n\ell_1\cdots\ell_k)^{-\frac1k}\xrightarrow[]{\textup{ d }}0$,
we have both
\begin{align*}
&\diam^+_\vece(\R^k/L_{n,\veca,\vecell})
-\frac{\vece\cdot\vecell}{(n\ell_1\cdots\ell_k)^{1/k}}
\xrightarrow[]{\textup{ d }}\rho(\Delta,L),
\\
&\diam_\vece(\R^k/L_{n,\veca,\vecell})
-\frac{\vece\cdot\vecell}{2(n\ell_1\cdots\ell_k)^{1/k}}
\xrightarrow[]{\textup{ d }}\rho(\fP,L),
\quad\text{and}\quad
\diam_\vece(\R^k/L_{n,\veca,\vecell})\xrightarrow[]{\textup{ d }}\rho(\fP,L)
\end{align*}
as $T\to\infty$.
Hence Theorem \ref{Thm1} follows from
Proposition \ref{DIAMFINALRELPROP},
and so does Theorem \ref{Thm2} if we also assume $\scrD\subset\fF$.
\hfill $\square$ $\square$

\section{On the distribution of $\rho(\fP,L)$ for random $L\in X_k$}
\label{RHOPLDISTRSEC}

In this section we give proofs of those results about the
distribution of $\rho(\fP,L)$ for random $L\in X_k$ which we
have used or mentioned in previous sections.

\subsection{Proof of the continuity of $\tilde P_k(R)$}
\label{TILDEPCONTSEC}

The proof that $\tilde P_k(R)$ is a continuous function of $R$ follows
the same basic strategy as the proof of the continuity of
$P_k(R)=\Psi_d(R)$ in \cite[Lem.~7]{Marklof10},
but the details are a bit more complicated.
We start by giving a necessary criterion for $\rho(\fP,L)=R$,
in Lemma \ref{RHOPLCRITLEM} below.
We write $\{\pm1\}^k$ for the set of all vectors in $\R^k$ of the form
$\pm\vece_1\pm\vece_2\pm\ldots\pm\vece_k$. For each 
$\veceps=(\epsilon_1,\ldots,\epsilon_k)\in\{\pm1\}^k$
we let $\fP_\veceps$ be the (closed) face of $\fP$ given by
\begin{align}
\fP_\veceps &=\{\vecx\in\fP\col\veceps\cdot\vecx=1\}
\\\notag
&=\Bigl\{\vecx=(x_1,\ldots,x_k)\in\R^k\col \sum_{j=1}^k\epsilon_jx_j=1
\text{ and }\epsilon_jx_j\geq0\text{ for all }j=1,\ldots,k\Bigr\}.
\end{align}
It is clear from the last relation that $\fP_\veceps$ is a 
$(k-1)$-dimensional simplex.
The faces $\fP_\veceps$ together cover the boundary of $\fP$:
\begin{align}\label{FPBDRY}
\partial\fP=\Bigl\{\vecx\in\R^k\col\sum_{j=1}^k|x_j|=1\Bigr\}=
\bigcup_{\veceps\in\{\pm1\}^k}\fP_\veceps.
\end{align}
\begin{lem}\label{RHOPLCRITLEM}
If $\rho(\fP,L)=R$ for some $L\in X_k$ and $R>0$, then there is a vector
$\veczeta\in\R^k$ and a nonempty subset $E\subset\{\pm1\}^k$ such that
\begin{enumerate}
\item[(i)] $L\cap(\veczeta+R\fP^\circ)=\emptyset$;
\item[(ii)] 
$L\cap (\veczeta+R\fP_\veceps)\neq\emptyset$ for each $\veceps\in E$;
\item[(iii)] there does not exist any $\vecalf\in\R^k$
satisfying $\veceps\cdot\vecalf<0$ for all $\veceps\in E$.
\end{enumerate}
\end{lem}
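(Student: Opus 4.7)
The plan is to take $\veczeta$ to be a \emph{deep hole} of $L$ with respect to the $\ell^1$-metric (which is the Minkowski functional of $\fP$): the function $\phi(\vecy):=\min_{\vecx\in L}\|\vecy-\vecx\|_1$, with $\|\vecu\|_1:=|u_1|+\ldots+|u_k|=\max_{\veceps\in\{\pm1\}^k}\veceps\cdot\vecu$, is continuous and $L$-periodic, so it attains its supremum on the compact quotient $\R^k/L$. Since $r\fP+L=\R^k$ is equivalent to $\phi\leq r$ on $\R^k$, this supremum equals $\rho(\fP,L)=R$. Accordingly I fix any $\veczeta\in\R^k$ with $\phi(\veczeta)=R$.

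With this choice, (i) is immediate: any $\vecy\in L\cap(\veczeta+R\fP^\circ)$ would satisfy $\|\vecy-\veczeta\|_1<R=\phi(\veczeta)$, which is absurd. Moreover $\phi(\veczeta)=R$ is \emph{attained}, so there exists $\vecy\in L$ with $\|\vecy-\veczeta\|_1=R$, and by \eqref{FPBDRY} any such $\vecy$ lies in $\veczeta+R\fP_\veceps$ for at least one $\veceps$. Thus the set $E\subset\{\pm1\}^k$ specified by (ii) is nonempty.

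The heart of the argument is (iii), which I would attack by contradiction. Suppose $\vecalf\in\R^k$ satisfies $\veceps\cdot\vecalf<0$ for every $\veceps\in E$, and set $\veczeta_t:=\veczeta+t\vecalf$ for small $t>0$. For each nearest lattice point $\vecy$ (of which there are only finitely many, by discreteness of $L$), the \emph{active set} $E(\vecy):=\{\veceps\col\veceps\cdot(\vecy-\veczeta)=R\}$ is contained in $E$, because $\vecy\in\veczeta+R\fP_\veceps$ exactly when $\veceps\in E(\vecy)$. Using $\|\vecu\|_1=\max_\veceps\veceps\cdot\vecu$ and the fact that only $\veceps\in E(\vecy)$ realize this maximum at $t=0$ (and hence, by continuity, for $t$ in a small one-sided neighbourhood), one obtains
\begin{align*}
\left.\frac{d}{dt}\right|_{t=0^+}\|\vecy-\veczeta_t\|_1
=\max_{\veceps\in E(\vecy)}(-\veceps\cdot\vecalf)>0.
\end{align*}
Taking a minimum over the finite set of nearest $\vecy$ yields uniform constants $t_0,\eta>0$ with $\|\vecy-\veczeta_t\|_1\geq R+\eta t$ for $0<t<t_0$ and every such $\vecy$. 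The remaining lattice points with $\|\vecy-\veczeta\|_1>R$ are handled by continuity (on a finite shell of radius $R+\delta$) and by the triangle inequality $\|\vecy-\veczeta_t\|_1\geq\|\vecy-\veczeta\|_1-t\|\vecalf\|_1$ (for those further out). Combining, $\phi(\veczeta_t)>R$ for small $t>0$, contradicting $\phi\leq R$ globally.

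The main obstacle is precisely this directional-derivative step: the $\ell^1$-norm is only piecewise linear, so one must verify that the collection of $\veceps$ achieving the maximum in $\|\vecy-\veczeta_t\|_1$ does not expand beyond $E(\vecy)$ under a small perturbation, with bounds that are simultaneously valid across all the finitely many relevant lattice points. Once this piecewise-linear bookkeeping is set up correctly, conditions (i)--(iii) all drop out and the lemma follows.
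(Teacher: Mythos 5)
Your proof is correct and takes essentially the same route as the paper's: you fix a $\veczeta$ realizing the covering radius (the paper produces such a "deep hole" by a compactness argument on the supremum of radii of lattice-free translates), let $E$ record which faces of $\veczeta+R\fP$ meet $L$, and refute (iii) by sliding $\veczeta$ in the direction $\vecalf$ to manufacture a lattice-free translate of $R'\fP$ with $R'>R$. The "main obstacle" you flag is not actually an obstacle: since $\|\vecu\|_1=\max_{\veceps\in\{\pm1\}^k}\veceps\cdot\vecu\geq\veceps_0\cdot\vecu$ for any single $\veceps_0$, picking one $\veceps_0\in E(\vecy)$ already gives $\|\vecy-\veczeta_t\|_1\geq R-t\,\veceps_0\cdot\vecalf\geq R+ct$ with $c=\min_{\veceps\in E}(-\veceps\cdot\vecalf)>0$, so no bookkeeping of whether the active set expands is needed -- this is exactly the one-line inequality the paper uses.
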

\begin{proof}
Let $L\in X_k$ and $R>0$ be given with $\rho(\fP,L)=R$.
Then, similarly to what we noted in the proof of Lemma~\ref{DIAMASCOVRADLEM},
$R$ is %
the supremum of all $R'>0$ such that there exists a 
translate of $R'\fP$ which is disjoint from $L$.
Hence by a simple compactness argument there is some 
translate of $R\fP^\circ$ which is disjoint from $L$,
i.e.\ we have $L\cap(\veczeta+R\fP^\circ)=\emptyset$ for some 
$\veczeta\in\R^k$.
Let us fix such a vector $\veczeta$,
and let $E$ be the set of all $\veceps\in\{\pm1\}^k$ for which
$L\cap (\veczeta+R\fP_\veceps)\neq\emptyset$. Then conditions (i) and (ii)
hold by construction. 
Assume that (iii) does \textit{not} hold,
and let $\vecalf$ be a vector in $\R^k$ satisfying
$\veceps\cdot\vecalf<0$ for all $\veceps\in E$.
Now because of $L\cap(\veczeta+R\fP^\circ)=\emptyset$
and \eqref{FPBDRY}, for every point $\vecx\in(\veczeta+R\fP)\cap L$
there exists some $\veceps\in E$ such that $\vecx\in\veczeta+R\fP_\veceps$.
In particular we then have $\veceps\cdot(\vecx-\veczeta)=R$,
and hence, for all $t>0$,
\begin{align}
\veceps\cdot(\vecx-(\veczeta+t\vecalf))=R-t\veceps\cdot\vecalf>R,
\end{align}
so that $\vecx\notin(\veczeta+t\vecalf)+R\fP$.
It follows that $(\veczeta+t\vecalf)+R\fP$ is disjoint from
$(\veczeta+R\fP)\cap L$, for every $t>0$.
Hence for $t>0$ sufficiently small,
$(\veczeta+t\vecalf)+R\fP$ is in fact disjoint from all $L$,
so that $L\cap((\veczeta+t\vecalf)+R'\fP)=\emptyset$
even holds for some $R'>R$.
This gives $\rho(\fP,L)>R$, a contradiction.
Hence also condition (iii) must hold.
\end{proof}

\begin{lem}\label{RHOPLCRITINTERPRLEM}
A finite nonempty subset 
$E=\{\veceps_1,\ldots,\veceps_r\}\subset\R^k\setminus\{\bn\}$ 
satisfies the condition {\rm (iii)} in Lemma \ref{RHOPLCRITLEM}
if and only if $\sum_{i=1}^r c_i\veceps_i=\bn$ holds for some
choice of $c_1,\ldots,c_r\geq0$, not all $0$.
\end{lem}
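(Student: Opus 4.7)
The plan is to recognise this lemma as a direct instance of Gordan's theorem of the alternative, a standard consequence of the hyperplane separation theorem for convex sets. The strategy is to let $K\subset\R^k$ denote the convex hull of $\veceps_1,\ldots,\veceps_r$ (a compact convex set, being the convex hull of finitely many points) and to show that condition (iii) holds if and only if $\bn\in K$. The equivalence with the condition in the statement is then immediate by rescaling: $\sum_{i=1}^r c_i\veceps_i=\bn$ with $c_i\geq 0$ not all zero holds if and only if $\bn$ is a convex combination of the $\veceps_i$ (divide by the positive number $\sum_i c_i$).

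For the direction ``nonnegative combination $\Rightarrow$ (iii)'', I would just unfold the definition: if $\bn=\sum_i c_i\veceps_i$ with $c_i\geq 0$ and some $c_{i_0}>0$, then for any $\vecalf\in\R^k$ one has $\sum_i c_i(\veceps_i\cdot\vecalf)=\vecalf\cdot\bn=0$, which rules out having $\veceps_i\cdot\vecalf<0$ for every $i$ (the $i_0$-th term alone could not then be cancelled by the others).

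For the converse, I would argue by contrapositive. Assuming $\bn\notin K$, the separation theorem applied to the disjoint closed convex sets $\{\bn\}$ and $K$ yields some $\vecbeta\in\R^k$ and $c\in\R$ with $\vecbeta\cdot\bn<c\leq\vecbeta\cdot\vecx$ for every $\vecx\in K$; thus $\vecbeta\cdot\veceps_i\geq c>0$ for each $i$, and the vector $\vecalf:=-\vecbeta$ satisfies $\veceps_i\cdot\vecalf<0$ for all $i$, violating~(iii).

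The main obstacle is essentially nil: the whole statement is a standard piece of elementary convex duality, and the only ingredient beyond manipulation of definitions is the separation theorem applied to the singleton $\{\bn\}$ and the compact convex set $K$, where no regularity issues can arise.
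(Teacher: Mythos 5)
Your proof is correct. It takes a different route from the paper's: the paper works with cones, letting $C$ be the conic hull of $-E$, noting that condition (iii) says precisely that the dual cone $C^*$ has empty interior (equivalently, lies in a proper linear subspace of $\R^k$), and then invoking biduality $C^{**}=C$ together with the standard fact that this happens exactly when $C$ contains a line through the origin, which unwinds to a nontrivial nonnegative relation $\sum_{i}c_i\veceps_i=\bn$. You instead prove Gordan's theorem directly: condition (iii) is equivalent to $\bn$ lying in the convex hull $K$ of $\veceps_1,\ldots,\veceps_r$, with the easy direction a one-line computation (all terms $c_i(\veceps_i\cdot\vecalf)$ are $\leq0$ and the $i_0$-th is strictly negative, so the sum cannot vanish) and the nontrivial direction handled by strictly separating the point $\bn$ from the compact convex set $K$; the rescaling step uses $\sum_i c_i>0$, which you note. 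What your approach buys is self-containedness: it needs only the separation theorem for a point and a compact convex set, with no regularity issues, whereas the paper's phrasing implicitly relies on the bipolar theorem for (finitely generated, hence closed) cones and on the equivalence between empty interior of $C^*$ and $C$ containing a line. What the paper's approach buys is brevity on the page and a formulation that sits naturally in the cone-duality language already suggested by condition (iii). Both are complete and correct.
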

\begin{proof}
Let $C$ be the conic hull of $-E$.
Then condition (iii) in Lemma \ref{RHOPLCRITLEM} says
that the dual cone $C^*$ has empty interior,
or in other words that $C^*$ is contained in a proper linear subspace of $\R^k$.
This holds if and only if $C^{**}=C$ contains a line through the
origin, i.e.\ if and only if
$\sum_{i=1}^r c_i\veceps_i=\bn$ holds for some
choice of $c_1,\ldots,c_r\geq0$, not all $0$.
\end{proof}

The following lemma shows that $\tilde P_k(R)$ is continuous.
\begin{lem}\label{TILDEPCONTLEM}
For every $R>0$, 
\begin{align}
\mu_0(\{L\in X_k\col\rho(\fP,L)=R\})=0.
\end{align}
\end{lem}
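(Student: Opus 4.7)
The plan is to parametrize the event $\rho(\fP,L)=R$ by finitely many discrete data together with an algebraic constraint on $L$, and then to show that each slice has measure zero in $X_k$, so that the whole set is null as a countable union.

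First, I would combine Lemmas~\ref{RHOPLCRITLEM} and~\ref{RHOPLCRITINTERPRLEM} to derive a single linear condition from which the auxiliary vector $\veczeta$ has been eliminated. Suppose $\rho(\fP,L)=R$; then there exist $\veczeta\in\R^k$, a nonempty $E=\{\veceps_1,\ldots,\veceps_r\}\subset\{\pm1\}^k$, and lattice points $\vecx_i\in L\cap(\veczeta+R\fP_{\veceps_i})$ satisfying
\begin{equation*}
\veceps_i\cdot\vecx_i=R+\veceps_i\cdot\veczeta,\qquad i=1,\ldots,r.
\end{equation*}
By Lemma~\ref{RHOPLCRITINTERPRLEM}, there exist $c_1,\ldots,c_r\geq0$, not all zero, with $\sum_i c_i\veceps_i=\bn$; I would fix one such choice canonically for each admissible $E$ (say, a specific rational solution), leaving only finitely many pairs $(E,(c_i))$ to consider. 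Taking the corresponding linear combination of the displayed equations eliminates $\veczeta$ and yields
\begin{equation*}
\sum_{i=1}^r c_i(\veceps_i\cdot\vecx_i)=R\sum_{i=1}^r c_i>0.
\end{equation*}

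Next I would translate this into an algebraic constraint on $L$ viewed as a point of $\SL(k,\Z)\backslash\SL(k,\R)$. Writing $L=\Z^k g$ with $g\in\SL(k,\R)$, each $\vecx_i$ has the form $\vecm_i g$ for some $\vecm_i\in\Z^k$, and the above equation becomes $\tr(Mg)=R\sum_i c_i$, where $M\in M_k(\R)$ is the matrix determined by $(E,(c_i),(\vecm_i))$, independent of $g$. If $M=0$ the equation is inconsistent (since $R\sum c_i>0$) and contributes nothing; if $M\neq0$, then $g\mapsto\tr(Mg)$ is a nonconstant polynomial on $\SL(k,\R)$, so its level set is a proper real-algebraic subvariety and hence a Haar-null subset of $\SL(k,\R)$.

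Since the tuples $(\vecm_i)\in(\Z^k)^r$ range over a countable set while $(E,(c_i))$ ranges over a finite set, the set of $g\in\SL(k,\R)$ with $\rho(\fP,\Z^kg)=R$ is contained in a countable union of Haar-null subsets. Passing to the quotient $X_k\cong\SL(k,\Z)\backslash\SL(k,\R)$ then gives $\mu_0(\{L\in X_k:\rho(\fP,L)=R\})=0$. The main obstacle is the step where we use condition~(iii) to extract a $\veczeta$-free identity; once this is done the remainder is bookkeeping. A secondary technical point is verifying that $g\mapsto\tr(Mg)$ restricts to a nonconstant function on $\SL(k,\R)$ whenever $M\neq0$, which is routine (e.g.\ by exhibiting a one-parameter subgroup along which the trace varies).
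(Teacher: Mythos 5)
Your proposal is correct and follows essentially the same route as the paper's proof: applying Lemma \ref{RHOPLCRITLEM} together with Lemma \ref{RHOPLCRITINTERPRLEM} to eliminate $\veczeta$ via the relation $\sum_i c_i\veceps_i=\bn$, reducing the event to countably many conditions of the form $\tr(MA)=R\sum_i c_i$ with $M\neq0$ (or an empty condition when $M=0$), each of which is a null set in $G_0$. No substantive differences to report.
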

\begin{proof}
By the definition of $\mu_0$, it is equivalent to prove that
the set %
$S$ of all $A\in G_0$ satisfying $\rho(\fP,\Z^kA)=R$
satisfies $\mu_0(S)=0$.
Let $\scrE$ be the family of all subsets $E\subset\{\pm1\}^k$ satisfying
condition (iii) in Lemma \ref{RHOPLCRITLEM}.
Then, by that lemma, $S$ is a subset of
\begin{align}
\bigcup_{E\in\scrE}\bigl\{A\in G_0\col \text{ there exists $\veczeta\in\R^k$
such that }\Z^kA\cap(\veczeta+R\fP_\veceps)\neq\emptyset,\:\forall\veceps\in E
\bigr\}.
\end{align}
But $\scrE$ is finite; hence it suffices to prove that each
individual set in the above union has measure zero.
Thus fix some $E\in\scrE$; say $E=\{\veceps_1,\ldots,\veceps_r\}$.
The corresponding set in the above union can be expressed as
\begin{align}
\bigcup_{\vecn_1,\ldots,\vecn_r\in\Z^k}
\bigl\{A\in G_0\col\text{ there exists $\veczeta\in\R^k$
such that }\vecn_iA\in\veczeta+R\fP_{\veceps_i}\:
(i=1,\ldots,r)\bigr\}.
\end{align}
This is a countable union, and hence it suffices to prove that each
individual set in the union has measure zero. Thus we fix some 
$\vecn_1,\ldots,\vecn_r\in\Z^k$.
Since $E$ satisfies condition (iii) in Lemma \ref{RHOPLCRITLEM},
there exist, by Lemma \ref{RHOPLCRITINTERPRLEM}, some
$c_1,\ldots,c_r\geq0$, not all zero, so that
$\sum_{i=1}^r c_i\veceps_i=\bn$.
Now $\vecn_iA\in\veczeta+R\fP_{\veceps_i}$ implies
$(\vecn_iA-\veczeta)\cdot\veceps_i=R$, and multiplying this relation with
$c_i$ and adding over all $i$ we obtain
$\sum_{i=1}^rc_i\vecn_iA\cdot\veceps_i=R\sum_{i=1}^rc_i$.
Hence the set 
corresponding to our fixed $\vecn_1,\ldots,\vecn_r$ 
in the above union
is a subset of:
\begin{align}\label{TILDEPCONTLEMPF1}
\Bigl\{A\in G_0\col\sum_{i=1}^rc_i\vecn_iA\cdot\veceps_i=R\sum_{i=1}^rc_i
\Bigr\}
=\Bigl\{A\in G_0\col\text{tr}(MA)=R\sum_{i=1}^rc_i\Bigr\},
\end{align}
where $M=(m_{\ell j})$ is the $k\times k$-matrix given by
$m_{\ell j}=\sum_{i=1}^rc_i(\vecn_i\cdot\vece_j)(\veceps_i\cdot\vece_\ell)$.
We have $\sum_{i=1}^rc_i>0$, since $c_1,\ldots,c_r\geq0$ and
at least one $c_i$ is positive.
Hence if $M=0$ then the set \eqref{TILDEPCONTLEMPF1} is empty.
If $M\neq0$ then the set \eqref{TILDEPCONTLEMPF1} is a submanifold of
$G_0$ of codimension one
(cf.\ the proof of \cite[Lem.\ 7]{Marklof10}).
Hence the set \eqref{TILDEPCONTLEMPF1} 
has measure zero also in this case and the proof is complete.
\end{proof}

\subsection{Proof of $\tilde\rho_k>\frac12(k!)^{1/k}$ for $k\geq3$}
\label{TILDERHOSTRICTINEQSEC}

We noted in \eqref{TILDERHOKLOWBOUND} that 
$\tilde\rho_k\geq\vol(\fP)^{-1/k}=\sfrac12(k!)^{1/k}$
and in the present section we will prove that \textit{strict} inequality holds
in this relation when $k\geq3$.
Since the infimum in \eqref{TILDERHOKDEF} is known to be attained
(cf., e.g., \cite[Thm.\ 21.3]{pGcL87}),
it suffices to prove that there does not exist any lattice
covering of $\R^k$ by translates of $\fP$ which has density exactly one,
viz.\ with the $\fP$-translates having pairwise disjoint interiors.
In fact we will prove the stronger fact that there does not
exist any tessellation (lattice or non-lattice) of $\R^k$ 
by translates of $\fP$:
\begin{prop}\label{NOTESSELLATIONPROP}
For $k\geq3$ there does not exist any subset $P\subset\R^k$
such that $P+\fP=\R^k$ and $(\vecr+\fP^\circ)\cap(\vecs+\fP^\circ)=\emptyset$
for all $\vecr\neq\vecs\in P$.
Hence in particular, $\tilde\rho_k>\frac12(k!)^{1/k}$ for $k\geq3$.
\end{prop}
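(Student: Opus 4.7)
The plan is to argue the tessellation impossibility by contradiction; the strict inequality $\tilde\rho_k>\frac12(k!)^{1/k}$ will then follow from \eqref{TILDERHOKLOWBOUND} combined with the attainment of the infimum in \eqref{TILDERHOKDEF}, since equality would yield a lattice tiling of $\R^k$ by translates of (a rescaling of) $\fP$, which the main claim precisely forbids.

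Suppose such a $P\subset\R^k$ exists; after translating we may take $0\in P$, so $\fP$ itself is a tile. My strategy is to focus on the facet $F:=\fP_{\vece}$ (with $\vece=(1,\ldots,1)$) and show that $F$ must be centrally symmetric. Since $F$ is a $(k-1)$-simplex and simplices of dimension at least $2$ are never centrally symmetric, this will contradict $k\geq 3$.

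To derive the central symmetry of $F$: the facet lies in the hyperplane $H:=\{\vecx\in\R^k\col\vece\cdot\vecx=1\}$ and $\fP$ sits in the half-space $\vece\cdot\vecx\leq 1$. Any $\vecp\in F^\circ$ belongs to some other tile $\vecr+\fP$ with $\vecr\neq 0$, and by disjointness of interiors together with convexity this tile lies locally in the opposite half-space $\vece\cdot\vecx\geq 1$, with $\vecp$ in the relative interior of a facet of $\vecr+\fP$ coplanar with $F$. Since the unique facet of $\fP$ parallel to $\fP_{\vece}$ is $\fP_{-\vece}$, this coplanar facet must be $\vecr+\fP_{-\vece}=\vecr-F$, and matching the hyperplanes forces $\vece\cdot\vecr=2$. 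As $\vecp$ varies over $F^\circ$ the map $\vecp\mapsto\vecr$ is locally constant, decomposing $F^\circ$ up to a measure-zero set into the disjoint open pieces $F^\circ\cap(\vecr-F)^\circ$.

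The main obstacle is the final step: showing that this decomposition consists of a single piece with $\vecr-F=F$, which forces $F$ to be centrally symmetric about $\vecr/2$. This is precisely the classical theorem of Minkowski that every facet of a convex polytope tiling $\R^k$ by translates must be centrally symmetric (see the parallelohedron discussion in \cite{pGcL87}); applied to $\fP$ and $F$ it yields the desired contradiction directly. A more self-contained proof, avoiding this classical invocation, would need to rule out genuinely non-face-to-face configurations at $F$, which I would handle via a tangent-cone analysis at the vertices $e_i$ of $F$ within the tiling, pinning down $\vecr$ uniquely through the requirement that any piece containing a $(k-1)$-dimensional neighborhood of an interior point of $F$ is forced to engulf the entire facet.
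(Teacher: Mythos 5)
Your reduction of the strict inequality to the impossibility of a tiling is the same as the paper's (attainment of the infimum in \eqref{TILDERHOKDEF}, plus the observation that a density-one lattice covering is a tiling), but your proof of the tiling impossibility takes a genuinely different route. The paper argues from scratch: starting from points on the facet $\fP_{\vece}$ it produces, by two applications of an elementary "pushing" lemma, two distinct translates $\vecr+\fP$, $\vecs+\fP$ of the tiling with $\sum_j|r_j-s_j|<2$, whence $\frac12(\vecr+\vecs)\in(\vecr+\fP^\circ)\cap(\vecs+\fP^\circ)$, a contradiction; the authors explicitly note they could not locate a suitable reference, which is why the argument is self-contained. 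You instead reduce to the classical structure theory of translative tiles: every facet of a convex polytope tiling $\R^k$ by translates is centrally symmetric, while the facets of $\fP$ are $(k-1)$-simplices, hence not centrally symmetric for $k\geq3$. This does yield the Proposition, but with two caveats. First, in the generality you need (an arbitrary set $P$ of translation vectors, not a lattice, and no face-to-face assumption) the facet-symmetry statement is the Venkov--McMullen theorem, not Minkowski's; Minkowski's theorem concerns lattice tilings, which would suffice for the corollary $\tilde\rho_k>\frac12(k!)^{1/k}$ (where only lattices occur) but not for the Proposition as stated, so the citation must be adjusted. Second, your proposed self-contained alternative (the tangent-cone analysis forcing the decomposition of $F^\circ$ to consist of a single piece) is only a sketch and is precisely the nontrivial content of that classical theorem, so as written your proof stands entirely on the citation. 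With the reference made precise, your argument is correct and shorter on the page, at the cost of invoking substantially heavier machinery than the paper's elementary two-translate computation; the paper's approach buys a short, verifiable, reference-free proof.
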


The proof of this fact is quite easy but we have not been able to find an
appropriate reference for it.
The question of finding the optimal lattice covering of $\R^3$
by translates of $\fP$ was studied by Dougherty and Faber in
\cite[Sec.\ 7]{rDvF2004}, and they conjecture that the optimal density
is $\frac98$, which would mean that
$\tilde\rho_3=\frac34\sqrt[3]2=0.9449\ldots$.
We remark that for the more classical question of lattice \textit{sphere}
coverings, the optimal coverings are known in dimensions up to $5$;
cf.\ \cite{sDaSfV2008}, \cite{sReB76}, \cite{fV2003}.

\begin{proof}[Proof of Proposition \ref{NOTESSELLATIONPROP}]
Assume $P+\fP=\R^k$ and $(\vecr+\fP^\circ)\cap(\vecs+\fP^\circ)=\emptyset$
for all $\vecr\neq\vecs\in P$.
Without loss of generality we assume $\bn\in P$.
Now for any point $\vecx=(x_1,\ldots,x_k)\in\R_{>0}^d$ with
$x_1+\ldots+x_k=1$ (i.e. $\vecx\in\partial\fP$) we may argue as follows.
For any $\ve>0$ we have $\vecx+\ve\vece\notin\fP$, and thus
$\vecx+\ve\vece\in\vecr+\fP$ for some 
$\vecr=(r_1,\ldots,r_k)\in P\setminus\{\bn\}$.
Letting $\ve\to0$ it follows that there exists a point
$\vecr=(r_1,\ldots,r_k)\in P\setminus\{\bn\}$ such that
$\vecx\in\vecr+\partial\fP$,
i.e.\ $\sum_{j=1}^k|r_j-x_j|=1$, and also $\vecx+\ve\vece\in\vecr+\fP$ for all
$\ve$'s in some sequence of positive numbers tending to $0$.
Now
\begin{align}
\sum_{j=1}^k|r_j|\leq\sum_{j=1}^k|r_j-x_j|+\sum_{j=1}^k x_j=1+1=2,
\end{align}
and if $r_j<x_j$ would hold for some $j$ then we would have strict
inequality in the above computation, and this would lead to the contradiction
$\frac12\vecr\in\fP^\circ\cap(\vecr+\fP^\circ)$.
To sum up, we have proved that for any given 
$\vecx\in\R_{>0}^k$ with $x_1+\ldots+x_k=1$,
there exists some $\vecr\in P\setminus\{\bn\}$ satisfying
$r_j\geq x_j$ for $j=1,\ldots,k$, and $\sum_{j=1}^k r_j=2$.

Let us first apply the above fact with $\vecx=(1-(k-1)\ve,\ve,\ldots,\ve)$
with $\ve>0$ tending to zero.
It follows that there exists some $\vecr\in P$ with $r_1\geq1$,
$r_2,\ldots,r_k>0$ and $\sum_{j=1}^k r_j=2$.
Next we apply the above fact with
$\vecx=(r_1-1+\ve,r_2+\ve,r_3-2\ve,r_4,\ldots,r_k)$
(here we use $k\geq3$!).
This leads to the conclusion that there exists some 
$\vecs\in P$ with $s_1>r_1-1$, $s_2>r_2$, $s_j\geq r_j$ 
for $j=3,\ldots,k$, and $\sum_{j=1}^ks_j=2$.
In particular $\vecr\neq\vecs$ since $s_2>r_2$.
Now $s_1=2-\sum_{j=2}^ks_j<2-\sum_{j=2}^kr_j=r_1$, and hence
\begin{align}
\sum_{j=1}^k|s_j-r_j|=(r_1-s_1)+\sum_{j=2}^k(s_j-r_j)
=2r_1-2s_1<2r_1-2(r_1-1)=2,
\end{align}
which leads to the contradiction
$\frac12(\vecr+\vecs)\in(\vecr+\fP^\circ)\cap(\vecs+\fP^\circ)$.
\end{proof}

\subsection{The asymptotic formula for $\tilde P_k(R)$}
\label{PTILDEASYMPTSEC}

We now discuss the proof of the asymptotic formula stated in 
Remark \ref{PTILDEASYMPTREM}, viz.\ 
\begin{align}\label{PTILDEASYMPT}
\tilde P_k(R)=\frac{R^{-k}}{2\zeta(k)}+O_k\bigl(R^{-k-1-\frac1{k-1}}\bigr).
\end{align}
It turns out that most of the proof in
\cite{strombergsson11} of the asymptotic formula for $P_k(R)$,
\eqref{PKASYMPT}, carries over with very small changes to the present case:
Mimicking \cite[Sec.\ 2.1-3]{strombergsson11} we obtain
\begin{align}
\tilde P_k(R)=\frac{R^{-k}}{2k\zeta(k)}
\int_{\S_1^{k-1}}\ell(\vecv)^{-k}\,d\vecv
+O_k(R^{-(k+1)-\frac1{k-1}}),
\end{align}
where $\S_1^{k-1}$ is the unit sphere in $\R^k$ centered at zero, 
$d\vecv$ is the $(k-1)$-dimensional volume measure on $\S_1^{k-1}$,
and $\ell(\vecv)$ is the width of $\fP$ in the direction $\vecv$,
viz., for $\vecv=(v_1,\ldots,v_d)\in\S_1^{k-1}$,
\begin{align}
\ell(\vecv)=2\max(|v_1|,\ldots,|v_k|).
\end{align}
Now to get \eqref{PTILDEASYMPT} it only remains to prove the following.
\begin{lem}\label{PTILDECONSTLEM}
For every $k\geq2$ we have
\begin{align}
\int_{\S_1^{k-1}}\ell(\vecv)^{-k}\,d\vecv=k.
\end{align}
\end{lem}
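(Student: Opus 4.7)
The plan is to recognise $\max_j|v_j|$ as the support function of the cross-polytope $\fP$ and then to apply the classical polar-body volume identity, which reduces the integral to a one-line computation.

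First I would observe that, since $\fP$ is the unit ball of the $\ell^1$-norm, its support function
\begin{equation*}
h_\fP(\vecv):=\max_{\vecx\in\fP}\vecv\cdot\vecx=\max(|v_1|,\ldots,|v_k|)
\end{equation*}
coincides with the dual $\ell^\infty$-norm. Consequently $\ell(\vecv)=2h_\fP(\vecv)$ for every unit vector $\vecv$, and the integral in question becomes $2^{-k}\int_{\S_1^{k-1}}h_\fP(\vecv)^{-k}\,d\vecv$.

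Next I would invoke the standard identity
\begin{equation*}
\vol(K^\circ)=\frac1k\int_{\S_1^{k-1}}h_K(\vecv)^{-k}\,d\vecv,
\end{equation*}
valid for any convex body $K\subset\R^k$ with $\bn$ in its interior, where $K^\circ=\{\vecy\in\R^k\col\vecy\cdot\vecx\leq1,\:\forall\vecx\in K\}$ denotes the polar body. This identity is immediate from spherical coordinates: writing $\vecy=r\vecv$, one has $\vecy\in K^\circ$ iff $r\leq 1/h_K(\vecv)$, so
\begin{equation*}
\vol(K^\circ)=\int_{\S_1^{k-1}}\int_0^{1/h_K(\vecv)}r^{k-1}\,dr\,d\vecv=\frac1k\int_{\S_1^{k-1}}h_K(\vecv)^{-k}\,d\vecv.
\end{equation*}

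Finally I would specialise to $K=\fP$, whose polar body is the cube $\fP^\circ=[-1,1]^k$ of volume $2^k$. This yields $\int_{\S_1^{k-1}}(\max_j|v_j|)^{-k}\,d\vecv=k\cdot 2^k$, and hence $\int_{\S_1^{k-1}}\ell(\vecv)^{-k}\,d\vecv=2^{-k}\cdot k\cdot 2^k=k$, as claimed. There is essentially no obstacle in this argument: the only non-trivial ingredient is the polar-body formula, which is itself a one-line calculation. (An equally viable, though slightly more hands-on, alternative would be to split $\S_1^{k-1}$ into the $k$ congruent pieces on which a fixed coordinate dominates and reduce to a $(k-1)$-dimensional integral, but the polar-duality shortcut is cleaner and more transparent.)
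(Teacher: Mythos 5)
Your argument is correct and is essentially the paper's own proof: both reduce the integral to the volume of the polar body of $\fP$ via the radial/spherical-coordinates identity (the paper writes $\fP^*=\{r\vecv\col 0\leq r\leq(\frac12\ell(\vecv))^{-1}\}$, which encodes exactly your relation $\ell(\vecv)=2h_\fP(\vecv)$), and both conclude by identifying $\fP^*$ with the cube $[-1,1]^k$ of volume $2^k$. No further comment is needed.
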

\begin{proof}
Let $\fP^*$ be the polar body of $\fP$, i.e.\ 
\begin{align}
\fP^*=\bigl\{\vecx\in\R^k\col\vecx\cdot\vecy\leq1,\:\forall\vecy\in\fP\bigr\}
=\bigl\{r\vecv\col\vecv\in\S_1^{k-1},\:0\leq r\leq(\sfrac12\ell(\vecv))^{-1}
\bigr\}.
\end{align}
Then clearly
\begin{align}
\int_{\S_1^{k-1}}\ell(\vecv)^{-k}\,d\vecv=2^{-k}k\vol(\fP^*).
\end{align}
However one verifies easily that $\fP^*$ equals the $k$-dimensional cube
$[-1,1]^k$; hence $\vol(\fP^*)=2^k$ and the lemma follows.
\end{proof}

One may note that for $k=2$, \eqref{PTILDEASYMPT} says
$\tilde P_2(R)=\frac3{\pi^2}R^{-2}+O(R^{-4})$,
which is consistent with %
the explicit formula stated in Remark \ref{TILDEPEXPLREM}.

\section{The explicit formulas for $\tilde p_2(R)$ and $p_2(R)$}
\label{secExplicit}

We now prove the explicit formula for the density $\tilde p_2(R)$
which we stated in Remark \ref{TILDEPEXPLREM}.
\begin{prop}\label{TILDEPEXPLPROP}
For $k=2$ the density $\tilde p_k(R)=-\frac{d}{dR}\tilde P_k(R)$ is
given by
\begin{equation}\label{TILDEPEXPLPROPRES}
\tilde p_2(R)=
\begin{cases} 0 & (0\leq R\leq\frac{1}{\sqrt 2}) \\
\frac{24}{\pi^2} \big(\frac{2R^2-1}{R}\log\big(\frac{2R^2}{2R^2-1}\big)+\frac{1-R^2}{R}\log\big(\frac{R^2}{|1-R^2|}\big)\big) & (R>\frac{1}{\sqrt 2}) .
\end{cases}
\end{equation}
\end{prop}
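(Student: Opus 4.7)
The plan is to compute $\tilde P_2(R) = \mu_0(\{L \in X_2 : \rho(\fP, L) > R\})$ explicitly and then differentiate, using $\tilde p_2(R) = -\frac{d}{dR}\tilde P_2(R)$. For $R \leq 1/\sqrt 2$, the lower bound $\tilde\rho_2 = 1/\sqrt 2$ in Remark~\ref{lowbo2} gives $\tilde P_2(R) = 1$ and hence $\tilde p_2(R) = 0$, accounting for the first case of \eqref{TILDEPEXPLPROPRES}. The main work is the regime $R > 1/\sqrt 2$.

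\textbf{Parametrization and symmetry reduction.} I will parametrize $X_2 \cong \Gamma_0 \backslash G_0$ using the Iwasawa decomposition, writing $A = n(x)\,a(y)\,k_\theta$ where $n(x) = \smatr{1}{x}{0}{1}$, $a(y) = \smatr{y^{1/2}}{0}{0}{y^{-1/2}}$, and $k_\theta$ is the rotation by $\theta$; then $(x, y, \theta)$ ranges over a fundamental domain of $\Gamma_0 \backslash G_0$, and $d\mu_0 = c\cdot \frac{dx\,dy\,d\theta}{y^2}$ for a normalization constant $c$ involving $\zeta(2)$. The cross-polytope $\fP$ is invariant under the dihedral group $D_4$ (of order $8$) of signed coordinate permutations; since $\mu_0$ is preserved by this action, I can further restrict the $\theta$-range and multiply by an explicit symmetry factor.

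\textbf{Explicit form of $\rho(\fP, L)$ and integration.} In dimension two, $\rho(\fP, L)$ equals the maximum $\ell^1$-distance from the origin to any point of the $\ell^1$-Voronoi cell of $\bn$ in $\R^2/L$. Since the $\ell^1$-norm is piecewise linear, this maximum is attained at a ``critical point'' where $\bn$ and two or more short lattice vectors are $\ell^1$-equidistant from it. Enumerating the candidate witnesses --- essentially the short vectors $\pm\vecv_1$, $\pm\vecv_2$, $\pm(\vecv_1\pm\vecv_2)$ of a reduced basis --- yields a piecewise formula for $\rho(\fP, L)$ as an explicit function of $(x, y, \theta)$, with case boundaries given by explicit inequalities. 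Substituting into $\tilde P_2(R) = \int \mathbf{1}[\rho(\fP, L) > R]\,d\mu_0$ and then performing the $y$-integration (which is of the form $\int y^{-2}\,dy$ over intervals with $R$-dependent endpoints) produces the logarithmic factors in \eqref{TILDEPEXPLPROPRES}; the two-term structure reflects two distinct geometric regimes for the deep hole. Differentiating in $R$ yields $\tilde p_2(R)$.

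\textbf{Main obstacle.} The principal difficulty lies in the case analysis: correctly identifying the region in Iwasawa coordinates on which each witness configuration governs $\rho(\fP, L)$, handling overlaps and boundary cases, and tracking the $R$-dependence throughout. Useful consistency checks are that $\tilde p_2(R)$ must vanish continuously at $R = 1/\sqrt 2$ (where the two logarithmic singularities cancel after multiplication by their prefactors), reproduce the large-$R$ tail $\tilde P_2(R) \sim 3/(\pi^2 R^2)$ from \eqref{PTILDEASYMPTREMRES}, and integrate to $1$ over $(1/\sqrt 2, \infty)$.
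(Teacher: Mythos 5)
Your overall plan (compute $\tilde P_2$ explicitly, differentiate, handle $R\leq\frac1{\sqrt2}$ via $\tilde\rho_2=\frac1{\sqrt2}$) is sound, and your consistency checks are correct. But as written the argument has a genuine gap at its core: everything hinges on producing an explicit piecewise formula for $\rho(\fP,L)$ as a function of the Iwasawa coordinates $(x,y,\theta)$ over a fundamental domain of $\Gamma_0\backslash G_0$, together with the exact case boundaries, and you never derive it --- you yourself flag the case analysis as the ``main obstacle''. The claim that the maximum of the $\ell^1$-distance over the Voronoi-type cell is always witnessed by vectors among $\pm\vecv_1,\pm\vecv_2,\pm(\vecv_1\pm\vecv_2)$ of a ``reduced basis'' is plausible but unproved (reduced with respect to which norm?), and determining on which region of the fundamental domain each witness configuration governs $\rho$, that the list of configurations is exhaustive, and that the resulting $y$-integrals assemble into \eqref{TILDEPEXPLPROPRES} is precisely where all the work lies. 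Without that, the proposal is a programme rather than a proof.

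It is worth contrasting this with the route the paper takes, which is designed to avoid exactly this obstacle. After using $\SO(2)$-invariance to replace $\fP$ by the unit square $K=[0,1]^2$ (so $\tilde P_2(R)=\mu_0(\{\rho(K,L)>r\})$ with $r=\sqrt2R$), the paper parametrizes lattices by the critical configuration itself: $L_{(\alpha,\beta,\gamma)}=\delta^{-1/2}\bigl(\Z(\alpha,-\beta)+\Z(1,\gamma-\beta)\bigr)$ with $\delta=(1-\alpha)\beta+\alpha\gamma$, chosen so that a translate of the critical square meets $L$ in exactly three boundary points. In these adapted coordinates one proves (i) $\mu_0$ becomes $\frac3{\pi^2}\delta^{-2}\,d\alpha\,d\beta\,d\gamma$ (an Iwasawa/Jacobian computation), (ii) $\rho(K,L_{(\alpha,\beta,\gamma)})=\delta^{-1/2}$ exactly on $\Omega=\{\beta+\gamma>1\}$, and (iii) the map is injective on $\Omega$, its image is disjoint from the coordinate-swapped copy, and the two together have full measure. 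Then $\tilde P_2(R)$ is a completely explicit elementary integral of $\delta^{-2}$ over $\{\delta<r^{-2}\}\cap\Omega$, and differentiation gives \eqref{TILDEPEXPLPROPRES}. In other words, the paper trades your hard piecewise determination of $\rho$ on a standard fundamental domain for a handful of clean lemmas about a nonstandard parametrization in which $\rho$ is a single simple expression. If you wish to push your version through, you would need to supply the full classification of deep-hole configurations in Iwasawa coordinates --- substantially more bookkeeping than the paper's approach --- or else switch to an adapted parametrization of the above kind.
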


\subsection{Auxiliary lemmas}\label{AUXLEMSEC}
To prepare for the proof of Proposition \ref{TILDEPEXPLPROP}
we first prove a 
series of lemmas.
As a first step, note that since 
$\fP$ 
for $k=2$
is a square with side $\sqrt2$, 
the formula \eqref{TILDEPKDEF} may be rewritten as
(using the $\SO(2)$-invariance of $\mu_0$)
\begin{align}\label{TILDEP2FORMULA}
\tilde P_2(R)=\mu_0\bigl(\bigl\{L\in X_2\col\rho\bigl(K,L\bigr)>r
\bigr\}\bigr),\qquad
\text{where }\: r:=\sqrt2 R,
\end{align}
and where $K$ is the unit square
\begin{align}
K:=[0,1]^2=\{\vecx=(x_1,x_2)\col x_1,x_2\in[0,1]\}.
\end{align}
We will make frequent use of the fact that,
just as in proof of Lemma~\ref{DIAMASCOVRADLEM}, $\rho(K,L)$ is the supremum of 
all $r>0$ for which there exists a translate of $rK$ that is disjoint from $L$.

We now %
introduce a parametrization of $X_2$ that is tailored
to give a practicable expression for \eqref{TILDEP2FORMULA}.
To motivate our definition below, 
note that by Lemma \ref{RHOPLCRITLEM}
(transformed from $\fP$ to $K$), if $L\in X_2$ and $\rho(K,L)=r$ then there
is some $\veczeta\in\R^2$ such that $L$ has 
no point in the interior of $\veczeta+rK$, but $L$ has a point on 
each of two opposite sides of $\veczeta+rK$.
By perturbing $\veczeta$ in a direction parallel to these sides
we may also, at least for generic $L$, assume that $L$ %
intersects one more side of $\veczeta+rK$.
If we assume that the three $L$-points on the sides of $\veczeta+rK$ are
$\veczeta+r(\alpha,0)$, $\veczeta+r(0,\beta)$ and $\veczeta+r(1,\gamma)$
with $\alpha,\beta,\gamma\in(0,1)$ then it follows that
$L$ contains the vectors $r(\alpha,-\beta)$ and $r(1,\gamma-\beta)$,
and in fact these two vectors necessarily span $L$, since
$L$ is disjoint from the interior of $\veczeta+rK$.
Using also the fact that $L$ has co-area one, it follows that
\begin{align}\label{LABCDEF}
L=L_{(\alpha,\beta,\gamma)}:=\delta^{-\frac12}\bigl(\Z(\alpha,-\beta)
+\Z(1,\gamma-\beta)\bigr)
\end{align}
where
\begin{align}
\delta=\delta(\alpha,\beta,\gamma):=
\left|\begin{matrix}\alpha&-\beta\\1&\gamma-\beta\end{matrix}\right|
=(1-\alpha)\beta+\alpha\gamma>0.
\end{align}
\begin{lem}\label{LOCALDIFFEOLEM}
The map $(\alpha,\beta,\gamma)\mapsto L_{(\alpha,\beta,\gamma)}$ 
is a local diffeomorphism from $(0,1)^3$ to $X_2$,
under which the measure $\mu_0$ corresponds to
\begin{align}\label{LOCALDIFFEOLEMRES}
\frac3{\pi^2}\delta(\alpha,\beta,\gamma)^{-2}\,d\alpha\,d\beta\,d\gamma.
\end{align}
\end{lem}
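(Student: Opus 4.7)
To prove the lemma, I would work with the identification $X_2\cong\Gamma_0\backslash G_0$, where $G_0=\SL(2,\R)$ and $\Gamma_0=\SL(2,\Z)$. The map of the lemma then lifts locally to the smooth map
$$M(\alpha,\beta,\gamma)=\delta^{-1/2}\smatr{\alpha}{-\beta}{1}{\gamma-\beta}\in G_0.$$
Since both assertions of the lemma are purely local on both sides, it suffices to show that this lift is a local diffeomorphism into $G_0$ and to compute the pullback of a suitably normalized Haar measure on $G_0$.

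For this I would use the coordinate chart $(a,b,c)$ on the open subset $\{a>0\}\subset G_0$ with $d=(1+bc)/a$; a short right-translation computation shows that $\omega:=\frac{da\,db\,dc}{|a|}$ is a Haar measure on $G_0$. Via our parametrization, $a=\alpha\delta^{-1/2}$, $b=-\beta\delta^{-1/2}$, $c=\delta^{-1/2}$. Direct differentiation combined with the row reduction $R_2\leftarrow R_2+\beta R_3$, which collapses the second row of the resulting $3\times 3$ Jacobian matrix to $(0,-2\delta,0)$, should reduce the calculation to a $2\times 2$ subdeterminant and yield
$$\frac{\partial(a,b,c)}{\partial(\alpha,\beta,\gamma)}=\frac{\alpha}{2\delta^{5/2}}.$$
This is strictly positive on $(0,1)^3$, establishing the local diffeomorphism statement. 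Dividing by $|a|=\alpha\delta^{-1/2}$ then shows that the pullback of $\omega$ equals $\frac{1}{2\delta^2}\,d\alpha\,d\beta\,d\gamma$.

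It only remains to convert $\omega$ to the probability measure $\mu_0$ by computing the total mass. Passing to Iwasawa coordinates $M=n_x a_y k_\theta$ and performing a direct change of variables gives $\omega=\tfrac12\cdot\frac{dx\,dy\,d\theta}{y^2}$. Combining this with the classical facts that the standard fundamental domain for $\Gamma_0$ on $\HH$ has hyperbolic area $\pi/3$ and that $-I\in\Gamma_0$ acts on $k_\theta$ by $\theta\mapsto\theta+\pi$ (so that a fundamental $\theta$-range has length $\pi$), the total $\omega$-volume of $\Gamma_0\backslash G_0$ comes out to $\pi^2/6$. Hence $\mu_0=\frac{6}{\pi^2}\omega$, and pulling back via our map produces the claimed density $\frac{3}{\pi^2}\delta^{-2}\,d\alpha\,d\beta\,d\gamma$. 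The main obstacle in this argument is the $3\times 3$ Jacobian computation, which, after the row operation above, collapses cleanly; everything else is standard Iwasawa bookkeeping and a classical volume computation.
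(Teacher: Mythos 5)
Your proposal is correct, and its skeleton matches the paper's: lift the map to $G_0=\SL(2,\R)$ via $A_{(\alpha,\beta,\gamma)}=\delta^{-1/2}\smatr{\alpha}{-\beta}{1}{\gamma-\beta}$, compute a Jacobian against a known Haar measure, and use that the quotient map $G_0\to X_2=\Gamma_0\backslash G_0$ is a local diffeomorphism. The difference lies in the coordinates used for the core computation. The paper works out the explicit Iwasawa decomposition of $A_{(\alpha,\beta,\gamma)}$, i.e.\ closed formulas for $(x,y,\phi)$ in terms of $(\alpha,\beta,\gamma)$, finds the Jacobian $\partial(x,y,\phi)/\partial(\alpha,\beta,\gamma)=-(1+(\beta-\gamma)^2)^{-2}$, and then simply quotes the normalized form $\mu_0=\frac{3}{\pi^2}y^{-2}dx\,dy\,d\phi$, so no separate volume computation is needed. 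You instead use the matrix-entry chart $(a,b,c)$ with Haar measure $da\,db\,dc/|a|$; I checked that your Jacobian $\frac{\alpha}{2\delta^{5/2}}$, the resulting pullback $\frac{1}{2\delta^2}d\alpha\,d\beta\,d\gamma$, the conversion $\omega=\frac12 y^{-2}dx\,dy\,d\theta$, and the total mass $\pi^2/6$ of $\omega$ on $\Gamma_0\backslash G_0$ (area $\pi/3$ times $\theta$-length $\pi$ because $-I\in\Gamma_0$) are all correct, so your normalization $\mu_0=\frac6{\pi^2}\omega$ and the final density $\frac3{\pi^2}\delta^{-2}$ come out right. Your route avoids the somewhat messy explicit Iwasawa formulas at the price of having to justify the entry-coordinate Haar measure and perform the volume normalization; the paper's route keeps everything inside one standard, pre-normalized coordinate system. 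One cosmetic point: after $R_2\leftarrow R_2+\beta R_3$ the second row is literally $(0,-\delta^{-1/2},0)$; it becomes $(0,-2\delta,0)$ only after extracting the common factor $\frac12\delta^{-3/2}$ from all entries, which is evidently what you intend and does not affect the determinant $\frac{\alpha}{2\delta^{5/2}}$.
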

\begin{proof}
Set
\begin{align}
A=A_{(\alpha,\beta,\gamma)}:=\delta^{-\frac12}
\begin{pmatrix}\alpha&-\beta\\1&\gamma-\beta\end{pmatrix}\in G_0,
\end{align}
so that $L_{(\alpha,\beta,\gamma)}=\Z^2A$.
A computation shows that
the Iwasawa decomposition of $A$ %
is given by
\begin{align}
A_{(\alpha,\beta,\gamma)}=\matr 1x01\matr{\sqrt y}00{1/\sqrt y}
\matr{\cos\phi}{-\sin\phi}{\sin\phi}{\cos\phi}
\end{align}
where
\begin{align}\label{LOCALDIFFEOLEMPF1}
x=\frac{\alpha-\beta\gamma+\beta^2}{1+(\beta-\gamma)^2};
\qquad
y=\frac{(1-\alpha)\beta+\alpha\gamma}{1+(\beta-\gamma)^2};
\qquad
\phi=\frac\pi2+\arctan(\beta-\gamma).
\end{align}
One furthermore computes %
\begin{align}\label{LOCALDIFFEOLEMPF2}
\frac{\partial(x,y,\phi)}{\partial(\alpha,\beta,\gamma)}
=-(1+(\beta-\gamma)^2)^{-2}.
\end{align}
It is clear from \eqref{LOCALDIFFEOLEMPF1} that $x,y,\phi$ are
smooth functions of $(\alpha,\beta,\gamma)\in(0,1)^3$,
and since also the Jacobian determinant \eqref{LOCALDIFFEOLEMPF2}
is non-vanishing for all these $(\alpha,\beta,\gamma)$ it follows 
that the map $(\alpha,\beta,\gamma)\mapsto(x,y,\phi)$
is a local diffeomorphism from $(0,1)^3$ to $\R\times\R_{>0}\times(0,\pi)$.
However the Iwasawa decomposition is known to be a diffeomorphism from
$(x,y,\phi)\in\R\times\R_{>0}\times(\R/2\pi\Z)$ onto $G_0$, under which
the measure $\mu_0$ corresponds to
$\frac3{\pi^2}y^{-2}\,dx\,dy\,d\phi$.
Hence the map 
$(\alpha,\beta,\gamma)\mapsto A_{(\alpha,\beta,\gamma)}$ is a 
local diffeomorphism from $(0,1)^3$ to $G_0$, under which $\mu_0$ corresponds
to \eqref{LOCALDIFFEOLEMRES}.
To complete the proof of the lemma we need only recall that
the quotient map $G_0\to X_2=\Gamma_0\backslash G_0$
is a local diffeomorphism
and %
$\mu_0$ on $X_2$ is just the measure 
corresponding to $\mu_0$ on $G_0$.
\end{proof}

Set
\begin{align}\label{LPDEF}
L'_{(\alpha,\beta,\gamma)}:=\Z(\alpha,-\beta)+\Z(1,\gamma-\beta)\subset\R^2
\end{align}
so that 
$L_{(\alpha,\beta,\gamma)}=\delta^{-\frac12}L'_{(\alpha,\beta,\gamma)}$.
By construction the translated lattice
$(0,\beta)+L'_{(\alpha,\beta,\gamma)}$ contains three points on the
boundary of the unit square $K$, namely
$(\alpha,0)$, $(0,\beta)$ and $(1,\gamma)$.
We next determine those $(\alpha,\beta,\gamma)$ for which
$(0,\beta)+L'_{(\alpha,\beta,\gamma)}$ contains no \textit{other} point
in $K$.
\begin{lem}\label{ONLYTHREECONDLEM}
Given $(\alpha,\beta,\gamma)\in(0,1)^3$, the relation
\begin{align}\label{ONLYTHREECONDLEMCOND}
\bigl((0,\beta)+L'_{(\alpha,\beta,\gamma)}\bigr)\cap K
=\bigl\{(\alpha,0),(0,\beta),(1,\gamma)\bigr\}
\end{align}
holds if and only if $\beta+\gamma>1$.
\end{lem}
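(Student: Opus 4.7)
My plan is to directly analyze, for each pair $(m,n) \in \Z^2$, whether the lattice point
\begin{align*}
(0,\beta) + m(\alpha,-\beta) + n(1,\gamma-\beta) = \bigl(m\alpha + n,\,\beta(1-m-n) + n\gamma\bigr)
\end{align*}
lies in $K = [0,1]^2$. The key simplification is to reparameterize by $p := m + n$ (retaining $n$), so that a generic lattice point takes the form $\bigl(p\alpha + n(1-\alpha),\,(1-p)\beta + n\gamma\bigr)$. Under this change of variables the three prescribed boundary points correspond to $(p,n) \in \{(1,0),(0,0),(1,1)\}$, and the natural candidate ``extra'' point $(1-\alpha,\beta+\gamma)$ corresponds to $(p,n) = (0,1)$.

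For the ``only if'' direction I would simply observe that if $\beta + \gamma \leq 1$, then the lattice point with $(p,n) = (0,1)$, namely $(1-\alpha,\beta+\gamma)$, lies in $K$; since $\alpha \in (0,1)$ its first coordinate is strictly between $0$ and $1$ (so it cannot coincide with $(0,\beta)$ or $(1,\gamma)$), and since $\beta+\gamma > 0$ it cannot coincide with $(\alpha,0)$ either. So it is a fourth, distinct, element of the intersection.

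For the ``if'' direction, assuming $\beta + \gamma > 1$, I would argue by cases on $p$. For $p=0$ we have $x = n(1-\alpha)$ and $y = \beta + n\gamma$: $n \leq -1$ forces $x < 0$, while $n \geq 1$ forces $y \geq \beta + \gamma > 1$, leaving only $n = 0$. For $p = 1$ the analogous direct check leaves only $n \in \{0,1\}$ (for $n\geq 2$ one has $x \geq 2-\alpha > 1$, and for $n \leq -1$ one has $y < 0$), yielding the remaining two special points. For $p \geq 2$, the requirement $y \geq 0$ forces $n \geq (p-1)\beta/\gamma > 0$, hence $n \geq 1$; but then $x \geq 2\alpha + (1-\alpha) = 1 + \alpha > 1$. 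Symmetrically, for $p \leq -1$ the requirement $x \geq 0$ forces $n(1-\alpha) \geq -p\alpha \geq \alpha > 0$, so $n \geq 1$; and then $y \geq 2\beta + \gamma = \beta + (\beta+\gamma) > 1$.

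The argument is entirely elementary, so there is no real obstacle. The only thing requiring a bit of care is tracking exactly where the hypothesis $\beta+\gamma>1$ is used—namely in the case $p = 0$ (to exclude $n = 1$) and in the case $p \leq -1$ (to obtain $2\beta + \gamma > 1$)—while in the remaining cases one uses only $\alpha,\beta,\gamma \in (0,1)$.
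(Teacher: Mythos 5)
Your proof is correct, and its ``only if'' half coincides with the paper's (both exhibit the point $(1-\alpha,\beta+\gamma)$, i.e.\ the lattice point with coefficients $(-1,1)$, as a fourth element of the intersection when $\beta+\gamma\leq1$). The ``if'' half, however, is handled by a genuinely different device. The paper checks the coefficient vectors $\vecn=(n_1,n_2)$ with $|n_1|\leq1$ directly and then disposes of all $|n_1|\geq2$ by a geometric interpolation argument: if such a point lay in $K$, an integer point $(\sgn(n_1),m)$ in the triangle with vertices $(0,0)$, $(0,1)$, $\vecn$ would be mapped by the affine parametrization into the \emph{interior} of $K$, contradicting the $|n_1|\leq1$ check. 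You instead perform the unimodular change of variables $p=m+n$, after which the lattice point reads $\bigl(p\alpha+n(1-\alpha),\,(1-p)\beta+n\gamma\bigr)$ and every case ($p=0$, $p=1$, $p\geq2$, $p\leq-1$) is killed by one or two elementary inequalities, with the hypothesis $\beta+\gamma>1$ entering exactly where you say it does. Your route is more computational but completely self-contained, avoiding the paper's appeal to the ``simple geometric fact'' about integer points in triangles; the paper's argument is slightly more conceptual and indicates why only the coefficients with $|n_1|\leq1$ ever matter. Both are complete proofs of comparable length.
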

\begin{proof}
We have
\begin{align}\label{ONLYTHREECONDLEMPF1}
(0,\beta)+L'_{(\alpha,\beta,\gamma)}=
\bigl\{(0,\beta)+n_1(\alpha,-\beta)+n_2(1,\gamma-\beta)\col
\vecn=(n_1,n_2)\in\Z^2\bigr\}.
\end{align}
In this representation the three points $(\alpha,0)$, $(0,\beta)$, $(1,\gamma)$
correspond to $\vecn=(1,0)$, $\vecn=(0,0)$ and $\vecn=(0,1)$,
respectively.
Taking $\vecn=(-1,1)$ in \eqref{ONLYTHREECONDLEMPF1} we see that 
\eqref{ONLYTHREECONDLEMCOND} implies $(1-\alpha,\beta+\gamma)\notin K$,
viz.\ $\beta+\gamma>1$.
Conversely, assume $\beta+\gamma>1$.
One then immediately checks that, in the above representation,
those $\vecn$ with $|n_1|\leq1$ which give points in $K$ are
$\vecn=(1,0),(0,0),(0,1)$, and no others.
To conclude the proof of the lemma it now suffices to show that
all $\vecn\in\Z^2$ with $|n_1|\geq2$ also give points outside $K$.
Assume the opposite, i.e.\ that 
\begin{align}
\vecp:=(0,\beta)+n_1(\alpha,-\beta)+n_2(1,\gamma-\beta)\in K
\end{align}
for some $\vecn\in\Z^2$ with $|n_1|\geq2$.
It is a simple geometric fact that for any such $\vecn$, there exists an integer
$m$ such that the point $(\sgn(n_1),m)$ belongs to the closed triangle
with vertices $(0,0)$, $(0,1)$ and $\vecn$.
Applying the affine map 
$(x,y)\mapsto(0,\beta)+x(\alpha,-\beta)+y(1,\gamma-\beta)$ we conclude that
the point
\begin{align}
\vecq:=(0,\beta)+\sgn(n_1)(\alpha,-\beta)+m(1,\gamma-\beta)
\end{align}
lies in the closed triangle with vertices $(0,\beta)$, $(1,\gamma)$, $\vecp$.
Hence, since $\vecq$ is not equal to one of the triangle vertices,
and since $0<\beta,\gamma<1$ and $\vecp\in K$,
we conclude that $\vecq\in K^\circ$.
This is a contradiction since we saw above that no point in 
\eqref{ONLYTHREECONDLEMPF1} with $|n_1|\leq1$ lies in $K^\circ$.
\end{proof}
Set 
\begin{align}
\Omega:=\bigl\{(\alpha,\beta,\gamma)\in(0,1)^3\col\beta+\gamma>1\bigr\}.
\end{align}
After a translation and a scaling, 
Lemma \ref{ONLYTHREECONDLEM} says that for any 
$(\alpha,\beta,\gamma)\in\Omega$, the lattice
$L_{(\alpha,\beta,\gamma)}$
meets $\delta^{-\frac12}(0,-\beta)+\delta^{-\frac12}K$ in exactly
three points, all lying on the boundary of this square.
Hence for such $(\alpha,\beta,\gamma)$ we have
$\rho(K,L_{(\alpha,\beta,\gamma)})\geq\delta^{-\frac12}$.
The next lemma shows that we always have equality in this relation.
\begin{lem}\label{RHOKLEQULEM}
For any $(\alpha,\beta,\gamma)\in\Omega$
we have $L'_{(\alpha,\beta,\gamma)}\cap(\veczeta+rK^\circ)\neq\emptyset$
for all $r>1$, $\veczeta\in\R^2$, and hence
$\rho(K,L_{(\alpha,\beta,\gamma)})=\delta^{-\frac12}$.
\end{lem}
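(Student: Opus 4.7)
My plan is to derive the lemma from the single inequality $\rho(K,L')\leq 1$: combined with the reverse inequality $\rho(K,L')\geq 1$ already established via Lemma \ref{ONLYTHREECONDLEM} (which says $(0,-\beta)+K^\circ$ misses $L'$) and the homogeneity $\rho(K,\delta^{-1/2}L')=\delta^{-1/2}\rho(K,L')$, this yields $\rho(K,L)=\delta^{-\frac12}$ at once. In turn, $\rho(K,L')\leq 1$ will follow once I establish the assertion $L'\cap(\veczeta+rK^\circ)\neq\emptyset$ for every $\veczeta\in\R^2$ and $r>1$.

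To prove this assertion I will run a discrete intermediate-value argument along an explicit one-parameter family. Parametrize $L'\ni m\vecv_1+n\vecv_2=(m\alpha+n,\,-m\beta+n(\gamma-\beta))$, and for each $m\in\Z$ choose the unique $n_m$ with $v_1^{(m)}:=m\alpha+n_m\in(\zeta_1,\zeta_1+1]$; since $r>1$, this automatically gives $v_1^{(m)}\in(\zeta_1,\zeta_1+r)$. Writing $t_m:=v_1^{(m)}-\zeta_1\in(0,1]$ and $c_m:=v_2^{(m)}$, substitution yields $c_m=(\zeta_1+t_m)(\gamma-\beta)-m\delta$. The key step is then a short calculation: as $m$ advances by one, $t_{m+1}-t_m$ equals either $\alpha$ (when $t_m\leq 1-\alpha$) or $\alpha-1$ (otherwise), and expanding $c_{m+1}-c_m=(t_{m+1}-t_m)(\gamma-\beta)-\delta$ using the identity $\delta=(1-\alpha)\beta+\alpha\gamma$ collapses neatly to $-\beta$ in the first case and $-\gamma$ in the second.

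Consequently $c_m$ is strictly decreasing, tends to $\mp\infty$ as $m\to\pm\infty$, and each consecutive jump has magnitude $\beta$ or $\gamma$, hence strictly less than $1<r$. If no $c_m$ fell in the open interval $(\zeta_2,\zeta_2+r)$, some consecutive pair would have to bracket it, forcing a step of magnitude $\geq r$ and contradicting the step bound. Hence some $m$ satisfies $c_m\in(\zeta_2,\zeta_2+r)$, and then $(v_1^{(m)},c_m)\in L'\cap(\veczeta+rK^\circ)$ as required. The one nontrivial point is the simplification $c_{m+1}-c_m\in\{-\beta,-\gamma\}$; all else is routine bookkeeping. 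Notably, the hypothesis $\beta+\gamma>1$ defining $\Omega$ plays no role in this upper bound — only $\alpha,\beta,\gamma\in(0,1)$ is used — and enters the lemma only indirectly via the previously-established lower bound.
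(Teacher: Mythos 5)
Your proof is correct, and it reaches the key assertion by a genuinely different route than the paper, although both hinge on the same two lattice vectors. The paper argues by contradiction with a sliding step: assuming some translate $\veczeta+rK^\circ$ ($r>1$) misses $L'_{(\alpha,\beta,\gamma)}$, it translates the square downward to the infimal position at which a lattice point $\vecell$ lies on the open bottom side, and then notes that one of $\vecell+(-\alpha,\beta)$, $\vecell+(1-\alpha,\gamma)$ --- both in $L'_{(\alpha,\beta,\gamma)}$, with heights $\beta,\gamma<1<r$ and first coordinates differing by $1<r$ --- must land in the open square, a contradiction. Your argument is direct and constructive: choosing for each $m$ the representative with first coordinate in $(\zeta_1,\zeta_1+1]$, your identity $c_m=(\zeta_1+t_m)(\gamma-\beta)-m\delta$ and the resulting step sizes $c_{m+1}-c_m\in\{-\beta,-\gamma\}$ check out (the two cases correspond exactly to consecutive representatives differing by $(\alpha,-\beta)$ or by $-(1-\alpha,\gamma)$, i.e.\ the same vectors the paper uses), and the discrete intermediate-value argument with steps of size $<1<r$ replaces the paper's sliding/infimum step. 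Your version buys an explicit selection of a lattice point in $\veczeta+rK^\circ$ with no limiting argument; the paper's buys brevity. Finally, your reduction of $\rho(K,L_{(\alpha,\beta,\gamma)})=\delta^{-1/2}$ to $\rho(K,L'_{(\alpha,\beta,\gamma)})\le1$ (from the assertion) together with $\rho(K,L'_{(\alpha,\beta,\gamma)})\ge1$ (from Lemma \ref{ONLYTHREECONDLEM}) and homogeneity is exactly how the paper organizes matters (the lower bound is recorded in the text immediately preceding the lemma), and your observation that $\beta+\gamma>1$ enters only through that lower bound is likewise consistent with the paper's proof.
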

\begin{proof}
Assume the contrary; then
$L'_{(\alpha,\beta,\gamma)}\cap(\veczeta+rK^\circ)=\emptyset$
for some $r>1$ and $\veczeta\in\R^2$.
Note that there exists $t>0$ such that
$L'_{(\alpha,\beta,\gamma)}\cap(\veczeta-(0,t)+rK^\circ)\neq\emptyset$
(this follows since $L'_{(\alpha,\beta,\gamma)}$ contains a vector
with positive $\vece_1$-component $<r$, e.g.\ the vector $(\alpha,0)$).
Taking $t_0\geq0$ to be the infimum of all $t>0$ with that property,
and then replacing $\veczeta$
with $\veczeta-(0,t_0)$, we obtain a situation where
the side $\{\veczeta+(x,0)\col 0<x<r\}$ contains a lattice point
$\vecell\in L'_{(\alpha,\beta,\gamma)}$,
while still $L'_{(\alpha,\beta,\gamma)}\cap(\veczeta+rK^\circ)=\emptyset$.
But now also $\vecell+(-\alpha,\beta)\in L'_{(\alpha,\beta,\gamma)}$
and $\vecell+(1-\alpha,\gamma)\in L'_{(\alpha,\beta,\gamma)}$,
and at least one of these two points must lie in $\veczeta+rK^\circ$,
since $\vecell\in\{\veczeta+(x,0)\col 0<x<r\}$ and $r>1$.
This is a contradiction.
\end{proof}

\begin{lem}\label{DIFFEOLEM}
The map $(\alpha,\beta,\gamma)\mapsto L_{(\alpha,\beta,\gamma)}$
is a diffeomorphism from $\Omega$ 
onto an open subset $X_2'$ of $X_2$.
\end{lem}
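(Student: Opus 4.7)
The plan is to combine the local diffeomorphism from Lemma~\ref{LOCALDIFFEOLEM} with a direct injectivity argument that recovers $(\alpha,\beta,\gamma)$ from $L_{(\alpha,\beta,\gamma)}$ via Lemmas~\ref{RHOKLEQULEM} and~\ref{ONLYTHREECONDLEM}. Since $\Omega$ is an open subset of $(0,1)^3$, Lemma~\ref{LOCALDIFFEOLEM} at once implies that the restricted map $\Phi\col\Omega\to X_2$, $(\alpha,\beta,\gamma)\mapsto L_{(\alpha,\beta,\gamma)}$, is a smooth local diffeomorphism; consequently $X_2':=\Phi(\Omega)$ is open in $X_2$, and the remaining task is to prove that $\Phi$ is globally injective.

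For injectivity, I would suppose $L=L_{(\alpha_1,\beta_1,\gamma_1)}=L_{(\alpha_2,\beta_2,\gamma_2)}$ with both parameter triples in $\Omega$. First, Lemma~\ref{RHOKLEQULEM} gives $\delta_j^{-1/2}=\rho(K,L)$ for $j=1,2$ (where $\delta_j:=\delta(\alpha_j,\beta_j,\gamma_j)$), hence $\delta_1=\delta_2=:\delta$, and therefore $L':=\delta^{1/2}L=L'_{(\alpha_j,\beta_j,\gamma_j)}$ for each $j$. Next, applying Lemma~\ref{ONLYTHREECONDLEM} (after translating by $-(0,\beta_j)$), for each $j\in\{1,2\}$ the intersection $L'\cap S_{\beta_j}$, where $S_\beta:=[0,1]\times[-\beta,1-\beta]$, consists of exactly the three points $(0,0)$, $(\alpha_j,-\beta_j)$, $(1,\gamma_j-\beta_j)$, which sit strictly in the interiors of the left, bottom, and right sides of $S_{\beta_j}$ respectively; in particular $L'\cap S_{\beta_j}^\circ=\emptyset$.

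The decisive step will be to show $\beta_1=\beta_2$. If instead $\beta_1<\beta_2$, then the lattice point $(\alpha_1,-\beta_1)\in L'$ has first coordinate $\alpha_1\in(0,1)$ and second coordinate satisfying $-\beta_2<-\beta_1<0<1-\beta_2$, so $(\alpha_1,-\beta_1)\in S_{\beta_2}^\circ$, contradicting $L'\cap S_{\beta_2}^\circ=\emptyset$. By symmetry the case $\beta_1>\beta_2$ is equally impossible, forcing $\beta_1=\beta_2=:\beta$. Then the common square $S_{\beta_1}=S_{\beta_2}$ meets $L'$ in a unique point on its bottom side and a unique point on its right side, which yields $(\alpha_1,-\beta)=(\alpha_2,-\beta)$ and $(1,\gamma_1-\beta)=(1,\gamma_2-\beta)$, i.e.\ $\alpha_1=\alpha_2$ and $\gamma_1=\gamma_2$. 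Combined with the local-diffeomorphism property, this gives the diffeomorphism $\Phi\col\Omega\to X_2'$ claimed by the lemma.

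The main obstacle in this plan is really only the injectivity step: one has to translate Lemma~\ref{ONLYTHREECONDLEM} carefully so that the three marked $L'$-points lie strictly in the interiors of three distinct non-top sides of $S_\beta$, and then compare the two candidate squares $S_{\beta_1}$ and $S_{\beta_2}$ to produce an interior lattice point whenever $\beta_1\neq\beta_2$. All other ingredients follow directly from lemmas already established.
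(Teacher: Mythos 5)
Your proof is correct and follows essentially the same route as the paper: reduce to injectivity via Lemma \ref{LOCALDIFFEOLEM}, use Lemma \ref{RHOKLEQULEM} to equate the $\delta$'s, and then exploit the exact three-boundary-point description from Lemma \ref{ONLYTHREECONDLEM} to recover $(\alpha,\beta,\gamma)$. The only difference is bookkeeping: you pin down $\beta$ first by comparing the two translated squares, whereas the paper tests two lattice vectors against $(\alpha,0)+L'$ to get $\alpha=\alpha'$ first — both are valid instances of the same argument.
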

\begin{proof}
In view of Lemma \ref{LOCALDIFFEOLEM} it suffices to prove that
the map is injective. Thus assume
$L_{(\alpha,\beta,\gamma)}=L_{(\alpha',\beta',\gamma')}$
for some $(\alpha,\beta,\gamma),(\alpha',\beta',\gamma')\in\Omega$.
Then $\delta(\alpha,\beta,\gamma)=\delta(\alpha',\beta',\gamma')$
by Lemma \ref{RHOKLEQULEM}, and hence
$L'_{(\alpha,\beta,\gamma)}=L'_{(\alpha',\beta',\gamma')}$.
Call this lattice $L'$.
Using now Lemma \ref{ONLYTHREECONDLEM} and
$(\alpha,-\beta)\in L'$ it follows that $(\alpha,0)+L'$ is disjoint from
$K^\circ$.
In particular $(\alpha,0)+(-\alpha',\beta')\notin K^\circ$ and
$(\alpha,0)+(1-\alpha',\gamma')\notin K^\circ$,
and these two relations together imply $\alpha'=\alpha$.
Now also $\beta'=\beta$ and $\gamma'=\gamma$ follow easily.
\end{proof}

Let $W=\smatr0110$; this element acts on $\R^2$ by switching coordinates,
and it acts on lattices $L\subset\R^2$ by
$L\mapsto LW:=\{\vecx W\col \vecx\in L\}$.
The latter action gives a diffeomorphism of $X_2$ onto itself,
preserving $\mu_0$.
Set
\begin{align}
X_2'':=X_2'W,
\end{align}
where $X_2'$ is the open subset of $X_2$ defined in Lemma \ref{DIFFEOLEM}.
\begin{lem}\label{X0PX0PPDISJLEM}
$X_2'\cap X_2''=\emptyset.$
\end{lem}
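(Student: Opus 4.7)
The plan is to argue by contradiction: suppose $L\in X_2'\cap X_2''$ and use the two parametrizations of $L$ to produce a concrete lattice point that is incompatible with the ``no interior lattice point'' property of one of the associated empty squares.

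First I would pick the unique $(\alpha,\beta,\gamma),(\alpha',\beta',\gamma')\in\Omega$ with $L=L_{(\alpha,\beta,\gamma)}$ and $LW=L_{(\alpha',\beta',\gamma')}$, using Lemma \ref{DIFFEOLEM} and the definition $X_2''=X_2'W$. Since the unit square $K$ is $W$-invariant, $\rho(K,L)=\rho(K,LW)$, so Lemma \ref{RHOKLEQULEM} forces the common value $\delta:=\delta(\alpha,\beta,\gamma)=\delta(\alpha',\beta',\gamma')$; I would then set $r:=\delta^{-1/2}$.

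Next I would unfold the geometric content of each parametrization via Lemma \ref{ONLYTHREECONDLEM}, suitably translated and rescaled. The first parametrization yields a square $S_1:=r(0,-\beta)+rK$ whose only intersection with $L$ consists of the three boundary points $(0,0)$ (left), $P_1:=r(\alpha,-\beta)$ (bottom) and $P_2:=r(1,\gamma-\beta)$ (right), with empty top side and empty interior. Applying $W$ to the analogous statement for $L_{(\alpha',\beta',\gamma')}$, the second parametrization yields a square $S_2:=r(-\beta',0)+rK$ whose only intersection with $L$ consists of $(0,0)$ (bottom), $r(-\beta',\alpha')$ (left) and $r(\gamma'-\beta',1)$ (top), with empty right side and empty interior. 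Thus two distinct empty squares of the same side length $r$ sit on the same lattice, one missing its top edge and one missing its right edge.

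The heart of the argument is to test the two concrete lattice vectors $-P_1=(-r\alpha,r\beta)$ and $P_2-P_1=(r(1-\alpha),r\gamma)$ against $S_2$. Since $\beta,\gamma\in(0,1)$, both have $y$-coordinate strictly inside the interval $(0,r)$, i.e.\ strictly inside the $y$-range of $S_2$. Comparing the $x$-coordinates with the $x$-range $[-r\beta',r(1-\beta')]$ of $S_2$ yields the trichotomy: if $\alpha<\beta'$ then $-P_1\in S_2^\circ$; if $\alpha>\beta'$ then $P_2-P_1\in S_2^\circ$; if $\alpha=\beta'$ then $P_2-P_1$ lies on the (empty) right edge of $S_2$. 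Each alternative contradicts the structure of $S_2$ established above, so no such $L$ exists and $X_2'\cap X_2''=\emptyset$.

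The only real insight is choosing the pair $\{-P_1,P_2-P_1\}$ of test vectors; once they are on the table the verification is a trivial coordinate check, and I anticipate no further difficulty.
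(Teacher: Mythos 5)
Your proof is correct and follows essentially the same route as the paper: equate $\delta(\alpha,\beta,\gamma)=\delta(\alpha',\beta',\gamma')$ via the $W$-invariance of the square and Lemma \ref{RHOKLEQULEM}, then use the three-point structure from Lemma \ref{ONLYTHREECONDLEM} to exhibit a lattice vector forced into the forbidden part of the other empty square. The only (inessential) difference is that you test vectors coming from the first parametrization against the $W$-transformed square, whereas the paper tests $W$-images of the second parametrization's vectors against the first square, obtaining the contradiction as $\beta>\alpha'$ versus $\beta\leq\alpha'$ rather than your trichotomy in $\alpha$ versus $\beta'$.
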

\begin{proof}
Assume the contrary; then 
$L_{(\alpha,\beta,\gamma)}=L_{(\alpha',\beta',\gamma')}W$ for some
$(\alpha,\beta,\gamma),(\alpha',\beta',\gamma')\in\Omega$.
Now $\rho(K,L_{(\alpha',\beta',\gamma')}W)=
\rho(K,L_{(\alpha',\beta',\gamma')})$, since $W$ maps $K$ onto itself;
hence by Lemma \ref{RHOKLEQULEM} we have
$\delta(\alpha,\beta,\gamma)=\delta(\alpha',\beta',\gamma')$, and thus also
$L'_{(\alpha,\beta,\gamma)}=L'_{(\alpha',\beta',\gamma')}W$.
Call this lattice $L'$.
By Lemma \ref{ONLYTHREECONDLEM} we have
\begin{align}\label{X0PX0PPDISJLEMPF1}
((0,\beta)+L')\cap K=\bigl\{(\alpha,0),(0,\beta),(1,\gamma)\bigr\}.
\end{align}
Using here $(\gamma',1-\alpha')\in L'$ we get
$(0,\beta)+(\gamma',1-\alpha')\notin K$, viz.\ $\beta>\alpha'$.
On the other hand using $(\beta',-\alpha')\in L'$ we get
that $(0,\beta)+(\beta',-\alpha')$ is either outside $K$ or
else equals $(\alpha,0)$; hence we must have $\beta\leq\alpha'$.
This is a contradiction.
\end{proof}

\begin{lem}\label{TOTALMEASUREONELEM}
$\mu_0(X_2'\cup X_2'')=1.$
\end{lem}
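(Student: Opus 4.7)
The plan is to compute $\mu_0(X_2')$ explicitly, deduce $\mu_0(X_2'')=\mu_0(X_2')$ from a symmetry argument, and then invoke the disjointness already shown in Lemma~\ref{X0PX0PPDISJLEM}.

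First I would apply Lemma~\ref{DIFFEOLEM}, which identifies $X_2'$ diffeomorphically with $\Omega$ via $(\alpha,\beta,\gamma)\mapsto L_{(\alpha,\beta,\gamma)}$, together with the density formula \eqref{LOCALDIFFEOLEMRES} in Lemma~\ref{LOCALDIFFEOLEM}, to rewrite
\begin{align*}
\mu_0(X_2')=\frac{3}{\pi^2}\int_\Omega\frac{d\alpha\,d\beta\,d\gamma}{((1-\alpha)\beta+\alpha\gamma)^2}.
\end{align*}
Integrating over $\alpha$ first by partial fractions (the case $\beta=\gamma$ is a removable limit) yields
\begin{align*}
\int_0^1\frac{d\alpha}{((1-\alpha)\beta+\alpha\gamma)^2}=\frac{1}{\beta\gamma}.
\end{align*}
The remaining double integral over $\{(\beta,\gamma)\in(0,1)^2\col \beta+\gamma>1\}$ then evaluates as
\begin{align*}
\int_0^1\int_{1-\beta}^1\frac{d\gamma\,d\beta}{\beta\gamma}=\int_0^1\frac{-\log(1-\beta)}{\beta}\,d\beta=\zeta(2)=\frac{\pi^2}{6},
\end{align*}
so that $\mu_0(X_2')=\frac{1}{2}$.

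For $X_2''=X_2'W$, I would note that $W\in\GL(2,\Z)$ normalizes $\Gamma_0=\SL(2,\Z)$, so the right action $L\mapsto LW$ descends to a well-defined involutive diffeomorphism of $X_2$ (even though $\det W=-1$). Its pushforward of $\mu_0$ is again $\SL(2,\R)$-invariant, since $W$ normalizes $\SL(2,\R)$; by uniqueness of the invariant probability measure on $X_2$, the pushforward must equal $\mu_0$. Hence $\mu_0(X_2'')=\mu_0(X_2')=\frac{1}{2}$.

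Combining this with $X_2'\cap X_2''=\emptyset$ from Lemma~\ref{X0PX0PPDISJLEM} gives $\mu_0(X_2'\cup X_2'')=\mu_0(X_2')+\mu_0(X_2'')=1$, as required. The argument is essentially routine once the parametrization is in place; the only subtle point is the $W$-invariance of $\mu_0$, which does not follow from the bare $\SL(2,\R)$-invariance but rather from the uniqueness of the invariant probability measure on $X_2$.
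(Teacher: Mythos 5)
Your proof is correct and follows essentially the same route as the paper: the same parametrization of $X_2'$ by $\Omega$ via Lemmas~\ref{LOCALDIFFEOLEM} and \ref{DIFFEOLEM}, the same integral $\frac{3}{\pi^2}\int_\Omega\delta^{-2}$ evaluated by first integrating out $\alpha$ to get $\frac{1}{\beta\gamma}$ and then recognizing $\zeta(2)$, combined with the $W$-symmetry and the disjointness from Lemma~\ref{X0PX0PPDISJLEM}. The only cosmetic differences are that you evaluate the dilogarithm-type integral by series rather than the paper's substitution $\gamma=1-e^{-x}$, and that you spell out (correctly) the invariance of $\mu_0$ under $L\mapsto LW$, which the paper merely asserts.
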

\begin{proof}
We have
\begin{align}
\mu_0(X_2'\cup X_2'')=2\mu_0(X_2')=
\frac6{\pi^2}\int_\Omega\delta^{-2}  %
\,d\alpha\,d\beta\,d\gamma
=\frac6{\pi^2}\int_\Omega
\frac{d\alpha\,d\beta\,d\gamma}{((1-\alpha)\beta+\alpha\gamma)^2},
\end{align}
by Lemma \ref{LOCALDIFFEOLEM}.
Writing this as an iterated integral and evaluating the innermost
integral over $\alpha\in(0,1)$, we get
\begin{align}
=\frac6{\pi^2}\int_0^1\int_{1-\gamma}^1
\frac{d\beta\,d\gamma}{\beta\gamma}
=-\frac6{\pi^2}\int_0^1\frac{\log(1-\gamma)}\gamma\,d\gamma
=\frac6{\pi^2}\int_0^\infty\frac x{e^x-1}\,dx
=\frac6{\pi^2}\Gamma(2)\zeta(2)=1.
\end{align}
(We substituted $\gamma=1-e^{-x}$ and then used \cite[Thm.\ 14]{aI32}.)
\end{proof}

\begin{remark}
Another way to prove Lemma \ref{TOTALMEASUREONELEM} is to make
the discussion preceding \eqref{LABCDEF} more precise,
so as to show
that for a generic lattice $L\in X_2$, 
there exists some $\veczeta\in\R^2$ such that
$L\cap(\veczeta+rK^\circ)=\emptyset$ and 
\textit{either} 
$L$ contains the three points 
$\veczeta+r(\alpha,0)$, $\veczeta+r(0,\beta)$ and $\veczeta+r(1,\gamma)$
for some $(\alpha,\beta,\gamma)\in\Omega$
(thus $L\in X_2'$),
\textit{or}
$L$ contains the three points 
$\veczeta+r(0,\alpha)$, $\veczeta+r(\beta,0)$ and $\veczeta+r(\gamma,1)$
for some $(\alpha,\beta,\gamma)\in\Omega$
(in which case $L\in X_2''$).
However the above proof by direct computation also serves as a nice
consistency check of our set-up.
\end{remark}

\subsection{Proof of Proposition \ref*{TILDEPEXPLPROP}}
Using \eqref{TILDEP2FORMULA}, Lemmas \ref{LOCALDIFFEOLEM},
\ref{RHOKLEQULEM}, \ref{DIFFEOLEM}, \ref{X0PX0PPDISJLEM}, 
\ref{TOTALMEASUREONELEM}, and the fact that 
$L\mapsto LW$ preserves both $\mu_0$ and $\rho(K,L)$, we get:
\begin{align}\label{TILDEPEXPLPROPPF1}
\tilde P_2(R)=\frac6{\pi^2}\int_{\Omega_r}\delta(\alpha,\beta,\gamma)^{-2}
\,d\alpha\,d\beta\,d\gamma
=\frac6{\pi^2} 
\int_0^1\int_{1-\gamma}^1
\int_{I_{\beta,\gamma,r}}\frac{d\alpha}{\delta(\alpha,\beta,\gamma)^2}
\,d\beta\,d\gamma,
\end{align}
where $r=\sqrt2R$,
\begin{align}
\Omega_r:=\bigl\{(\alpha,\beta,\gamma)\in\Omega\col
\delta(\alpha,\beta,\gamma)<r^{-2}\bigr\},
\end{align}
and %
\begin{align}
I_{\beta,\gamma,r}=\bigl\{\alpha\in(0,1)\col\delta(\alpha,\beta,\gamma)<r^{-2}
\bigr\}.
\end{align}
Recalling $\delta(\alpha,\beta,\gamma)=(1-\alpha)\beta+\alpha\gamma$
we find that 
$I_{\beta,\gamma,r}=(0,\frac{r^{-2}-\beta}{\gamma-\beta})$ if
$\beta<r^{-2}<\gamma$,
$I_{\beta,\gamma,r}=(\frac{r^{-2}-\beta}{\gamma-\beta},1)$ if
$\gamma<r^{-2}<\beta$,
while 
$I_{\beta,\gamma,r}=(0,1)$ if $\beta,\gamma<r^{-2}$
and $I_{\beta,\gamma,r}=\emptyset$ if $\beta,\gamma>r^{-2}$.
Now it is easy to compute the derivative of the innermost integral in 
\eqref{TILDEPEXPLPROPPF1} with respect to $r$,
using the fact that $\delta(\alpha,\beta,\gamma)=r^{-2}$ 
for $\alpha=\frac{r^{-2}-\beta}{\gamma-\beta}$.
If $\beta<r^{-2}<\gamma$ then we get
\begin{align}
\frac d{dr}\int_{I_{\beta,\gamma,r}}
\frac{d\alpha}{\delta(\alpha,\beta,\gamma)^2}
=\Bigl(\frac d{dr}\frac{r^{-2}-\beta}{\gamma-\beta}\Bigr)\cdot(r^{-2})^{-2}
=-\frac{2r}{\gamma-\beta}.
\end{align}
Similarly when $\gamma<r^{-2}<\beta$ we get
\begin{align}
\frac d{dr}\int_{I_{\beta,\gamma,r}}
\frac{d\alpha}{\delta(\alpha,\beta,\gamma)^2}=-\frac{2r}{\beta-\gamma},
\end{align}
while if $\beta,\gamma<r^{-2}$ or $\beta,\gamma>r^{-2}$ then the derivative
vanishes.
Hence we obtain, using also the symmetry between $\beta$ and $\gamma$:
\begin{align}
\tilde p_2(R)=-\frac d{dR}\tilde P_2(R)
=-\sqrt2\frac d{dr}\tilde P_2(R)
=\frac{12\sqrt2}{\pi^2}
\iint\limits_{J_r}
\frac{2r}{\gamma-\beta}\,d\beta\,d\gamma,
\end{align}
where $J_r$ is the set of all pairs $(\beta,\gamma)\in(0,1)^2$
satisfying both $\beta+\gamma>1$ and $\beta<r^{-2}<\gamma$.
If $r\leq1$ then $J_r=\emptyset$, so that $\tilde p_2(R)=0$.
On the other hand if $r>\sqrt2$ then we get
\begin{align}
\tilde p_2(R)&=\frac{24\sqrt2}{\pi^2}r\int_0^{r^{-2}}\int_{1-\beta}^1
\frac{d\gamma\,d\beta}{\gamma-\beta}
=\frac{24\sqrt2}{\pi^2}r\int_0^{r^{-2}}\log\Bigl(\frac{1-\beta}{1-2\beta}\Bigr)
\,d\beta
\\\notag
&=\frac{12\sqrt2}{\pi^2}r\Bigl(
(1-2r^{-2})\log(1-2r^{-2})-2(1-r^{-2})\log(1-r^{-2})\Bigr).
\end{align}
Finally if $1<r<\sqrt2$ then we get
\begin{align}
\tilde p_2(R)&=\frac{24\sqrt2}{\pi^2}r\biggl(\int_0^{1-r^{-2}}\int_{1-\beta}^1
\frac{d\gamma\,d\beta}{\gamma-\beta}
+\int_{1-r^{-2}}^{r^{-2}}\int_{r^{-2}}^1\frac{d\gamma\,d\beta}{\gamma-\beta}
\biggr)
\\\notag
&=\frac{24\sqrt2}{\pi^2}r\biggl(\int_0^{1-r^{-2}}
\log\Bigl(\frac{1-\beta}{1-2\beta}\Bigr)\,d\beta
+\int_{1-r^{-2}}^{r^{-2}}\log\Bigl(\frac{1-\beta}{r^{-2}-\beta}\Bigr)\,d\beta
\biggr)
\\\notag
&=\frac{12\sqrt2}{\pi^2}r\Bigl(
(1-2r^{-2})\log(2r^{-2}-1)-2(1-r^{-2})\log(1-r^{-2})\Bigr).
\end{align}
Hence, recalling $r=\sqrt2R$, we obtain the formula stated in
\eqref{TILDEPEXPLPROPRES}.
\hfill$\square$

\subsection{The explicit formula for $p_2(R)$}
\label{p2REXPLSEC}

We next turn to the explicit formula for $p_2(R)$ which we
stated in \eqref{Ustinovs}.
This formula is due to Ustinov \cite{Ustinov10}, who proved it
by an argument involving Kloosterman sums and continued fractions.
We think it may be of interest 
to see an alternative derivation of \eqref{Ustinovs} based on
the definition of $P_2(R)$
in terms of Haar measure on the space of lattices, cf.\ \eqref{PKRDEF},
and so we give an outline of this argument here.

The overall structure of the argument is similar to
the previous case of $\tilde p_2(R)$.

For any $(\alpha,\beta,\gamma)\in(-\frac12,\frac12)^3$ we set
$\Lambda_{(\alpha,\beta,\gamma)}:=\kappa^{-\frac12}
\Lambda'_{(\alpha,\beta,\gamma)}$,
where
\begin{align}
\Lambda'_{(\alpha,\beta,\gamma)}:=
\Z(-\sfrac12+\gamma,-\alpha-\gamma)+\Z(\beta+\gamma,-\sfrac12-\gamma)
\end{align}
and
\begin{align}
\kappa=\kappa(\alpha,\beta,\gamma):=
\left|\begin{matrix}
-\sfrac12+\gamma&-\alpha-\gamma\\\beta+\gamma&-\sfrac12-\gamma
\end{matrix}\right|
=\sfrac14+\alpha\beta+\alpha\gamma+\beta\gamma>0.
\end{align}
The motivation of the above definition is that the translated lattice 
$(\frac12-\gamma,\frac12+\gamma)+\Lambda'_{(\alpha,\beta,\gamma)}$
constains the three points $(0,\frac12-\alpha)$, $(\frac12+\beta,0)$ 
and $(\frac12-\gamma,\frac12+\gamma)$ on the boundary of $\Delta$.

By a similar computation as in Lemma \ref{LOCALDIFFEOLEM} one proves
that the map $(\alpha,\beta,\gamma)\mapsto \Lambda_{(\alpha,\beta,\gamma)}$ 
is a local diffeomorphism from $(-\frac12,\frac12)^3$ to $X_2$,
under which the measure $\mu_0$ corresponds to
\begin{align}\label{LOCALDIFFEORES2}
\frac3{\pi^2}\kappa(\alpha,\beta,\gamma)^{-2}\,d\alpha\,d\beta\,d\gamma.
\end{align}

Next one proves analogues of Lemma \ref{ONLYTHREECONDLEM} and
Lemma \ref{RHOKLEQULEM}.
It is useful to assume that \textit{at least two of $\alpha,\beta,\gamma$
are positive}.
Note that for generic $(\alpha,\beta,\gamma)\in(-\frac12,\frac12)^3$
we can always get to this situation after possibly applying the map
$W=\smatr0110$ (cf.\ Section \ref{AUXLEMSEC});
this is because
$\Delta W=\Delta$, 
$\Lambda'_{(\alpha,\beta,\gamma)}W=\Lambda'_{(-\beta,-\alpha,-\gamma)}$
and
$\Lambda_{(\alpha,\beta,\gamma)}W=\Lambda_{(-\beta,-\alpha,-\gamma)}$.
It now turns out that if $(\alpha,\beta,\gamma)\in(-\frac12,\frac12)^3$
and at least two of $\alpha,\beta,\gamma$ are positive,
then the necessary and sufficient 
condition for 
$(\frac12-\gamma,\frac12+\gamma)+\Lambda'_{(\alpha,\beta,\gamma)}$
to contain no \textit{other} points in $\Delta$ than 
$(0,\frac12-\alpha)$, $(\frac12+\beta,0)$ 
and $(\frac12-\gamma,\frac12+\gamma)$, is:
\begin{align}\label{P2EXPLNEWPF1}
\alpha+\beta>0,\quad\alpha+\gamma>0,\quad\beta+\gamma>0.
\end{align}
(Note that, in the other direction, \eqref{P2EXPLNEWPF1} %
implies that at least two of $\alpha,\beta,\gamma$ are positive.)
Next, for any $(\alpha,\beta,\gamma)\in(-\frac12,\frac12)^3$ satisfying
\eqref{P2EXPLNEWPF1}, the necessary and sufficient condition 
for 
$\Lambda'_{(\alpha,\beta,\gamma)}\cap(\veczeta+r\Delta^\circ)\neq\emptyset$
to hold for all $r>1$, $\veczeta\in\R^2$,
is $\alpha+\beta+\gamma\leq\sfrac12$. Set
\begin{align}
\Omega:=\Bigl\{(\alpha,\beta,\gamma)\in(-\sfrac12,\sfrac12)^3\col
\alpha+\beta>0,\:
\alpha+\gamma>0,\:
\beta+\gamma>0,\:
\alpha+\beta+\gamma<\sfrac12\Bigr\}.
\end{align}
It then follows from the last statements that
$\rho(\Delta,\Lambda_{(\alpha,\beta,\gamma)})=\kappa^{-\frac12}$ holds
for all $(\alpha,\beta,\gamma)\in\Omega$.

It now follows by similar arguments as in Lemma \ref{DIFFEOLEM}
and Lemma \ref{X0PX0PPDISJLEM}
that the map $(\alpha,\beta,\gamma)\mapsto\Lambda_{(\alpha,\beta,\gamma)}$
is injective when restricted $\Omega$, and hence gives a 
diffeomorphism from $\Omega$ onto an open subset $X_2'\subset X_2$,
and furthermore that $X_2'$ is disjoint from $X_2'':=X_2'W$. 
Finally, it turns out that the union of 
$X_2'$ and $X_2''$ has full measure in $X_2$:
\begin{align}\label{P2EXPLNEWPF3}
\mu_0(X_2'\cup X_2'')=1.
\end{align}
(This can be proved either by a direct computation, cf.\ below,
or else by proving
that a generic lattice in $X_2$ indeed must belong to either $X_2'$ or $X_2''$.)

Using \eqref{PKRDEF} and the above facts, it follows that
\begin{align}\label{P2EXPLNEWPF2}
P_2(R)=\frac6{\pi^2}\int_{\Omega_R}\kappa(\alpha,\beta,\gamma)^{-2}
\,d\alpha\,d\beta\,d\gamma,
\end{align}
where now 
\begin{align}
\Omega_R:=\bigl\{(\alpha,\beta,\gamma)\in\Omega\col
\kappa(\alpha,\beta,\gamma)<R^{-2}\bigr\}.
\end{align}
We next introduce $s=\alpha+\beta+\gamma$ and
$t=\alpha^2+\beta^2+\gamma^2$ as new variables of integration in 
\eqref{P2EXPLNEWPF2}. Note that $0<s<\frac12$ for all 
$(\alpha,\beta,\gamma)\in\Omega$;
also $t\geq\frac13s^2$ by Cauchy's inequality.
Conversely, for given $s\in(0,\frac12)$ and $t\geq\frac13s^2$,
the set of corresponding points $(\alpha,\beta,\gamma)\in\R^3$
is the circle with center $\frac13(s,s,s)$ and 
radius $\sqrt{t-\frac13s^2}$ in the plane $\{\alpha+\beta+\gamma=s\}$,
and we may parametrize these points as
\begin{align}
(\alpha,\beta,\gamma)=\sfrac13(s,s,s)+\sqrt{t-\sfrac13s^2}\Bigl(
(\cos\omega)\vecb_1+(\sin\omega)\vecb_2\Bigr),\qquad\omega\in\R/2\pi\Z,
\end{align}
where $\vecb_1,\vecb_2$ is an arbitrary fixed orthonormal basis 
in the orthogonal complement of $(1,1,1)$ in $\R^3$.
Now $(\alpha,\beta,\gamma)\in\Omega$ holds if and only if
$0<s<\frac12$ and $\max(\alpha,\beta,\gamma)<s$,
and the latter condition is equivalent to $(\alpha,\beta,\gamma)$
lying inside a certain equilateral triangle with side $2\sqrt2s$
and center $\sfrac13(s,s,s)$ in the plane $\{\alpha+\beta+\gamma=s\}$.
This triangle has inradius $\sqrt{\frac23}s$ and circumradius 
$2\sqrt{\frac23}s$;
hence if $\sqrt{t-\frac13s^2}<\sqrt{\frac23}s$ 
(viz., $t<s^2$) then \textit{all} $\omega$ correspond to
points in $\Omega$, while if
$\sqrt{\frac23}s\leq\sqrt{t-\frac13s^2}<2\sqrt{\frac23}s$ 
(viz., $s^2\leq t<3s^2$) then certain subintervals of 
$\omega\in\R/2\pi\Z$ have to be removed, and the Lebesgue measure of 
those $\omega\in\R/2\pi\Z$ which correspond to points in $\Omega$ is
\begin{align}
2\pi-6\arctan\Bigl(\sqrt{\sfrac32}s^{-1}\sqrt{t-s^2}\Bigr).
\end{align}
Hence, using also
$\kappa(\alpha,\beta,\gamma)=\frac14+\frac12s^2-\frac12t$
and $|\frac{\partial(\alpha,\beta,\gamma)}{\partial(s,t,\omega)}|
=\frac1{2\sqrt3}$, we obtain:
\begin{align}\notag
P_2(R)=\frac{\sqrt3}{\pi^2}\biggl(2\pi\int_0^{\frac12}\int_{I_{s,R}}
\frac{1}{(\sfrac14+\sfrac12s^2-\sfrac12t)^2}\,dt\,ds
\hspace{170pt}
\\\label{P2EXPLNEWPF4}
+\int_0^{\frac12}\int_{J_{s,R}}
\frac{2\pi-6\arctan\Bigl(\sqrt{\sfrac32}s^{-1}\sqrt{t-s^2}\Bigr)}
{(\sfrac14+\sfrac12s^2-\sfrac12t)^2}\,dt\,ds\biggr),
\end{align}
where
\begin{align}
I_{s,R}=\bigl(\sfrac13s^2,s^2\bigr)\cap\bigl(s^2+\sfrac12-2R^{-2},\infty\bigr)
\quad\text{and}\quad
J_{s,R}=\bigl(s^2,3s^2\bigr)\cap\bigl(s^2+\sfrac12-2R^{-2},\infty\bigr).
\end{align}
In particular for $R\leq\sqrt3$ we have
$I_{s,R}=(\frac13s^2,s^2)$ and $J_{s,R}=(s^2,3s^2)$ for all
$s\in(0,\frac12)$ and in this case $P_2(R)=1$,
corresponding to the fact that 
the union of $X_2'$ and $X_2''$ has full measure in $X_2$,
cf.\ \eqref{P2EXPLNEWPF3}.
Next if $\sqrt3\leq R\leq2$ then still $J_{s,R}=(s^2,3s^2)$ for all 
$s\in(0,\frac12)$, but now
$I_{s,R}=(\frac13s^2,s^2)$ only for 
$s\in(0,\frac{\sqrt3}2\sqrt{4R^{-2}-1}]$, while
$I_{s,R}=(s^2+\frac12-2R^{-2},s^2)$ for 
$s\in[\frac{\sqrt3}2\sqrt{4R^{-2}-1},\frac12)$.
Hence by differentiation we obtain
\begin{align}
p_2(R)=-\frac d{dR}P_2(R)
=\frac{2\sqrt3}\pi\int_{\frac{\sqrt3}2\sqrt{4R^{-2}-1}}^{\frac12}
R^4\cdot4R^{-3}\,ds
=\frac{12}\pi\Bigl(\frac R{\sqrt3}-\sqrt{4-R^2}\Bigr).
\end{align}
Finally if $R>2$ then $I_{s,R}=\emptyset$ for all $s$,
and $J_{s,R}=\emptyset$ for $s\in(0,\frac12\sqrt{1-4R^{-2}}]$,
and $J_{s,R}=(s^2+\frac12-2R^{-2},3s^2)$ for 
$s\in[\frac12\sqrt{1-4R^{-2}},\frac12)$.
Hence by differentiation,
\begin{align}
p_2(R)
=\frac{\sqrt3}{\pi^2}\int_{\frac12\sqrt{1-4R^{-2}}}^{\frac12}
R^4\cdot 4R^{-3}\cdot
\Bigl(2\pi-6\arctan\Bigl(\sfrac{\sqrt3}2\sqrt{1-4R^{-2}}\cdot s^{-1}\Bigr)\Bigr)
\,ds,
\end{align}
and this is easily evaluated to yield the expression given in
\eqref{Ustinovs}.
Hence \eqref{Ustinovs} holds for all $R\geq0$.
\hfill$\square$

\section{Further results\label{secEx}}

We conclude by discussing a number of natural extensions and variations of Theorems \ref{Thm1} and \ref{Thm2}. They require only minor modifications in the proofs.

\subsection{Non-constant lengths}

We now admit lengths $\vecell=(\ell_1,\ldots,\ell_k)$ that depend on $n$ and $\veca$. Such a requirement may arise for instance when $C_n(\veca)$ or $C_n^+(\veca)$ is embedded in a metric space ($\RR^2$, say), and the lengths $\vecell$ are induced by the actual distance in that metric space. To make a precise statement: {\em Let $\vecell:[0,1]^k\to\RR_{\geq0}^k$ be continuous, 
and assume $\vecell(\vecx)>0$ for (Lebesgue-)almost all $\vecx\in[0,1]^k$.
Then,  for any bounded set $\scrD\subset \fF^+$ with nonempty interior and boundary of Lebesgue measure zero, we have convergence in distribution
\begin{equation}\label{Thm1eq001}
\frac{ \diam C_n^+\bigl(\vecell(n^{-1}\veca),\veca\bigr)}{\bigl(n \ell_1(n^{-1}\veca)\cdots \ell_k(n^{-1}\veca)\bigr)^{1/k} }
\xrightarrow[]{\textup{ d }}\rho(\Delta,L)
\qquad\text{as }\: T\to\infty,
\end{equation}
where the %
random variable in the left-hand side is
defined by taking $(\veca,n)$ uniformly at random in 
$\widehat\NN^{k+1}\cap T\scrD$, and the random variable in the right-hand side
is defined by taking $L$ at random in $X_k$ according to $\mu_0$.}
The analogous statement holds in the undirected case.

The limit distribution of Frobenius numbers proved in
\cite{Marklof10} can be viewed as a special case of the above result,
obtained by taking $\vecell(\vecx)\equiv\vecx$.
Indeed, for this choice of $\vecell$ we have
\begin{align}
\diam C_n^+\bigl(\vecell(n^{-1}\veca),\veca\bigr)
=n^{-1}\diam C_n^+\bigl(\veca,\veca\bigr)
=1+n^{-1}F(a_1,\ldots,a_k,n),
\end{align}
where $F(a_1,\ldots,a_k,n)$ denotes the Frobenius number of the
$k+1$ numbers $a_1,\ldots,a_k,n$; cf.\ 
\cite[Lem.\ 3]{aBjS62} or \cite[Sec.\ 2]{dBjHaNsW2005}.
Because of this relation, and since the Frobenius number is invariant
under permutation of the arguments,
\cite[Thm.\ 1]{Marklof10} follows from \eqref{Thm1eq001}.

\subsection{The distribution of distances}

Besides the diameter it is natural to consider the distribution of the distance between two randomly chosen vertices $i$ and $j$. The $\alpha$th moment (for $\alpha\in\Z_{\geq 1}$) of this distribution is
\begin{equation}
	\MM_\alpha =\frac{1}{n^2} \sum_{i,j} d(i,j)^\alpha,
\end{equation}
where $n$ is the number of vertices.
If $\Lambda$ is a sublattice of $\Z^k$ of finite index then
in view of the definition of the distance on the directed quotient
lattice graph $LG_k^+/\Lambda$, cf.\ \eqref{LGPQUOTDISTDEF}, 
we get
\begin{align}
\MM_\alpha[LG_k^+/\Lambda]=\frac1{\#(\Z^k/\Lambda)}\sum_{\vecm\in\Z^k/\Lambda}
\bigl(\min\bigl((\vecm+\Lambda)\cap\Z_{\geq0}^k\bigr)\cdot\vecell\bigr)^\alpha.
\end{align}
Similarly for the undirected quotient graph $LG_k/\Lambda$,
we get via \eqref{LGQUOTDISTDEF},
\begin{align}
\MM_\alpha[LG_k/\Lambda]=\frac1{\#(\Z^k/\Lambda)}\sum_{\vecm\in\Z^k/\Lambda}
\bigl(\min\bigl\{\vecz_+\cdot\vecell\col\vecz\in\vecm+\Lambda\bigr\}\bigr)^\alpha.
\end{align}
Following the same strategy as for the diameter one can show that under the same assumptions as in Theorem \ref{Thm1},
\begin{align}\label{DISTANCEDIRTHMRES2}
\frac{\MM_\alpha(C_n^+(\vecell,\veca))}
{(n\ell_1\cdots\ell_k)^{\alpha/k}}
\xrightarrow[]{\textup{ d }}
\int_{\R^k/L}\Psi_L(\vecy)^\alpha\,d\vecy
\qquad\text{as }\: T\to\infty,
\end{align}
where 
\begin{align}
\Psi_L(\vecy):=
\min\bigl((\vecy+L)\cap\R_{\geq0}^k\bigr)\cdot\vece.
\end{align}
Note that the scaling factor is the same as for the diameter, the maximum value of the distribution of distances; this is a non-trivial fact.
In fact joint convergence holds in \eqref{DISTANCEDIRTHMRES2} for all
$\alpha\geq1$, and from this it is possible to conclude that 
the distribution of normalized distances
$\frac{d(i,j)}{(n\ell_1\cdots\ell_k)^{1/k}}$
for vertices $i,j$ picked uniformly at random in $C_n^+(\vecell,\veca)$,
converges in distribution, as $T\to\infty$,
to the distribution of
$\Psi_L(\vecy)$ for $\vecy$ picked at random in $\R^k/L$ according
to the standard volume measure $d\vecy$.
The convergence here is in the space of probability measures on $\R_{\geq0}$,
cf., e.g.,\ \cite[Ch.\ 10]{oK97},
and the setting of the limit relation is the same as in Theorem~\ref{Thm1}.
The limiting random probability measure on $\R_{\geq0}$ obtained in this
result satisfies many interesting and beautiful properties;
we postpone a detailed discussion of these matters to a future paper.

The analogue of \eqref{DISTANCEDIRTHMRES2} in the undirected case is
\begin{align}\label{DISTANCEUTHMRES2}
\frac{\MM_\alpha(C_n(\vecell,\veca))}
{(n\ell_1\cdots\ell_k)^{\alpha/k}}
\xrightarrow[]{\textup{ d }}
\int_{\R^k/L}\tilde\Psi_L(\vecy)^\alpha\,d\vecy
\qquad\text{as }\: T\to\infty,
\end{align}
where 
\begin{align}
\tilde\Psi_L(\vecy):=
\min\bigl\{\vecz_+\cdot\vece\col\vecz\in \vecy+L\bigr\}.
\end{align}

\subsection{Shortest cycles}

The shortest cycle length (scl) of a circulant graph and its connection to the geometry of lattices is discussed in \cite{Cai99}. 
The length of the shortest cycle in a directed quotient lattice graph
$LG_k^+/\Lambda$ is
\begin{align}
\scl[LG_k^+/\Lambda]=\min\bigl(\Lambda\cap\Z_{\geq0}^k\setminus\{\bn\}\bigr)
\cdot\vecell
\end{align}
In the undirected case, there are trivial cycles 
which correspond to cycles in the covering lattice
graph $LG_k$. The shortest of these have 4 edges, and thus the girth of any quotient graph $LG_k/\Lambda$ is at most 4. We will ignore such cycles and only consider those which do \textit{not} lift to a cycle in $LG_k$, or in other words cycles which have non-zero homology when viewed as closed curves on the real torus $\R^k/\Lambda$. With this convention, the shortest length of all non-trivial cycles in a quotient lattice graph $LG_k/\Lambda$ is given by
\begin{align}
	\scl[LG_k/\Lambda]=\min\big\{ \vecm_+\cdot\vecell \col \vecm\in\Lambda\setminus\{\bn\}\bigr\}.
\end{align}

Using the same method as for the diameter one can show that,
under the same assumptions as in Theorem \ref{Thm1},
\begin{align}\label{Thm1eq003}
\frac{\scl[C_n^+(\vecell,\veca)]}{(n \ell_1\cdots \ell_k)^{1/k} }
\xrightarrow[]{\textup{ d }}
\min(L\cap\R_{\geq0}^k\setminus\{\bn\})\cdot\vece
\qquad\text{as }\: T\to\infty.
\end{align}
The complementary distribution function of the limit distribution in
this relation %
is
\begin{equation}\label{PKSCLDEF}
P_{k,\scl}(R)=\mu_0\big(\big\{L\in X_k\col R\Delta\cap L\setminus\{\vecnull\} =\emptyset \big\}\big),
\end{equation}
since for any $L\in X_k$ we have 
$\min(L\cap\R_{\geq0}^k\setminus\{\bn\})\cdot\vece>R$ if and only if
$R\Delta\cap L\setminus\{\vecnull\} =\emptyset$.
The analogue of \eqref{Thm1eq003} in the undirected case is
\begin{align}\label{SCLLIMITUNDIR}
\frac{\scl[C_n(\vecell,\veca)]}{(n \ell_1\cdots \ell_k)^{1/k} }
\xrightarrow[]{\textup{ d }}
\min\bigl\{\|\vecm\|_1\col\vecm\in L\setminus\{\bn\}\bigr\}
\qquad\text{as }\: T\to\infty,
\end{align}
and here the complementary distribution function of the limit distribution is
\begin{equation}\label{SCLLIMITUNDIR2}
\tilde P_{k,\scl}(R)=\mu_0\big(\big\{L\in X_k\col R\fP\cap L\setminus\{\vecnull\} =\emptyset \big\}\big).
\end{equation}

Comparison with \cite[Thm.\ 2.1]{partI} shows that for $k=2$ the limit distribution in the directed case,
\eqref{Thm1eq003}, \eqref{PKSCLDEF}, is related to the distribution of angles of two-dimensional lattice points (including multiplicities) via the formula
\begin{equation}
P_{2,\scl}(R)=E_{0,\vecnull}(0,\sigma)
\end{equation}
with $\sigma=R^2/2$. 
Formula (2.16) in \cite{partI} shows therefore that the density of $P_{2,\scl}(R)$ is related to the gap distribution function $P_\vecnull(s)$ for angles of lattice points,
\begin{equation}
p_{2,\scl}(R):=-\frac{d}{dR} P_{2,\scl}(R) = R\, P_\vecnull(R^2/2).
\end{equation}
An explicit formula for $P_\vecnull(s)$ can be derived from \cite{Boca00} (use Eq.\ (2.31) in \cite{partI} to relate $P_\vecnull(s)$ to $\widehat P_\vecnull(s)$; the latter is denoted $\widetilde G_{\bf D}(s)$ in \cite{Boca00}); we find
\begin{equation}
P_\vecnull(s)=\tfrac{6}{\pi^2}
\begin{cases}
1 & (0\leq s\leq\frac12) \\
s^{-1} (1+\log 2s)-1 & (\frac12\leq s\leq2) \\
s^{-1} -1 +\sqrt{1-2s^{-1}} - 2s^{-1}\log\big(\frac12\big(1+\sqrt{1-2s^{-1}}\big)\big) & (s\geq2).
\end{cases}
\end{equation}
Thus
\begin{equation}
p_{2,\scl}(R)=\tfrac{6}{\pi^2}
\begin{cases}
R & (0\leq R\leq1) \\
2R^{-1} (1+2\log R)-R & (1\leq R\leq2) \\
2R^{-1} -R +\sqrt{R^2-4} - 4R^{-1}\log\big(\frac12\big(1+\sqrt{1-4R^{-2}}\big)\big) & (R\geq2).
\end{cases}
\end{equation}

Similarly, \cite[Thm.\ 3.1]{partI} shows that for $k=2$ the limit distribution 
in the undirected case, \eqref{SCLLIMITUNDIR}, \eqref{SCLLIMITUNDIR2},
is related to the distribution of disks in random directions via the formula
\begin{equation}
\tilde P_{2,\scl}(R)=F_{0,\vecnull}(0,\sigma)
\end{equation}
with $2\sigma=R^2$. To see this, note that (in view of the $\SO(2)$ invariance of $\mu_0$) the square $\fP$ can be replaced by the square $[-R/\sqrt 2,R/\sqrt 2]^2$ which in turn (due to the invariance under the symmetry $\vecx\mapsto-\vecx$) can be replaced by the rectangle $[0,R/\sqrt 2]\times[-R/\sqrt 2,R/\sqrt 2]$. The function $F_{0,\vecnull}(0,\sigma)$ is in turn related to the free path length $\Phi_\vecnull(\xi)$ of the two-dimensional periodic Lorentz gas via formula (4.3) in \cite{partI}. This implies for the density of $\tilde P_{2,\scl}(R)$:
\begin{equation}
\tilde p_{2,\scl}(R):=-\frac{d}{dR} \tilde P_{2,\scl}(R) = R\, \Phi_\vecnull(R^2/2).
\end{equation}
The explicit formula for $\Phi_\vecnull$ in \cite{Boca03} (denoted there by $h$; the formula can also be obtained from \cite[Eqs.\ (15) and (34)]{partIII} or from \cite[Prop.\ 3 (``$r=0$'')]{aSaV2005}) yields
\begin{equation}
\tilde p_{2,\scl}(R)=\tfrac{12}{\pi^2}
\begin{cases}
R &  (0\leq R\leq1)\\
\frac{2 - R^2}{R}\big(1 + \log\big(\frac{R^2}{2 - R^2}\big)\big) & (1\leq R<\sqrt 2)\\
0 & (R\geq\sqrt 2).
\end{cases}
\end{equation}


\begin{thebibliography}{99}

%
%

\bibitem{Aliev07}
I.M. Aliev and P.M. Gruber, An optimal lower bound for the Frobenius problem.  J. Number Theory  \textbf{123}  (2007) 71--79.

\bibitem{Amir10}
G. Amir and O. Gurel-Gurevich, The diameter of a random Cayley graph of $\ZZ_q$. Groups Complex. Cryptol. \textbf{2} (2010) 59--65.

\bibitem{dBjHaNsW2005}
D.\ Beihoffer, J.\ Hendry, A.\ Nijenhuis and S.\ Wagon, 
Faster algorithms for {F}robenius numbers,
Electron.\ J.\ Combin.\ \textbf{12} (2005),
Research Paper 27, 38 pp.

\bibitem{Boca00}
F.P. Boca, C. Cobeli and A. Zaharescu, Distribution of lattice points visible from the origin.  Comm. Math. Phys.  \textbf{213}  (2000), 433--470.

\bibitem{Boca03}
F.P. Boca, R.N. Gologan and A. Zaharescu, The statistics of the trajectory
of a certain billiard in a flat two-torus.  Comm. Math. Phys.  \textbf{240}
(2003), 53--73.

\bibitem{Bollobas81}
B. Bollob\'as, The diameter of random graphs. Trans. Amer. Math. Soc. \textbf{267} (1981) 41--52.

\bibitem{Bollobas82}
B. Bollob\'as and  W. Fernandez de la Vega, The diameter of random regular graphs. Combinatorica \textbf{2} (1982) 125--134.

\bibitem{Bollobas88}
B. Bollob\'as and  F.R.K. Chung, The diameter of a cycle plus a random matching. SIAM J. Discrete Math. \textbf{1} (1988) 328--333.

\bibitem{Bollobas04}
B. Bollob\'as and O. Riordan, The diameter of a scale-free random graph. Combinatorica \textbf{24} (2004) 5--34.

\bibitem{aBjS62}
A.\ Brauer and J.\ E.\ Shockley,
On a problem of Frobenius,
J.\ Reine Angew.\ Math.\ \textbf{211} (1962) 215--220.

\bibitem{Cai99}
J.-Y. Cai, G.\ Havas, B. Mans, A. Nerurkar, J.-P. Seifert and I. Shparlinski, On routing in circulant graphs, in: T. Asano et al. (Eds.) COCOON`99, LNCS 1627 (1999) pp. 360--369.

\bibitem{Chung01}
F. Chung and L. Lu, The diameter of sparse random graphs. Adv. in Appl. Math. 26 (2001) 257--279.

\bibitem{sCjSmAtC2010}
S.\ I.\ R.\ Costa, J.\ E.\ Strapasson, M.\ M.\ S.\ Alves and T.\ B.\ Carlos,
Circulant graphs and tessellations on flat tori, Linear Algebra Appl.\
\textbf{432} (2010), 369--382.

\bibitem{hC73}
H.\ S.\ M.\ Coxeter, \textit{Regular polytopes},
third edition, Dover Publications Inc., New York, 1973.

\bibitem{Ding10}
J. Ding, J.H. Kim, E. Lubetzky and Y. Peres, Diameters in supercritical random graphs via first passage percolation. Combin. Probab. Comput. \textbf{19} (2010) 729--751.

\bibitem{rDvF2004}
R.\ Dougherty and V.\ Faber, 
The degree-diameter problem for several varieties of {C}ayley graphs. {I}. 
The abelian case,
SIAM J.\ Discrete Math.\ \textbf{17} (2004), 478--519.

\bibitem{sDaSfV2008}
M.\ Dutour Sikiri{\'c}, A.\ Sch\"urmann and F.\ Vallentin, A generalization of Voronoi's reduction theory and its applications,  Duke Math. J. \textbf{142} (2008), 127-164.

\bibitem{Fernholz07}
D. Fernholz and V. Ramachandran, The diameter of sparse random graphs. Random Structures Algorithms \textbf{31} (2007) 482Ð516.

\bibitem{aGfX2007}
A.\ Ganesh and F.\ Xue, 
On the connectivity and diameter of small-world networks,
Adv.\ in Appl.\ Probab. \textbf{39} (2007), 853--863.

\bibitem{pG85}
P.\ Gritzmann, Lattice covering of space with symmetric convex bodies,
Mathematika \textbf{32} (1985), 311--315.

\bibitem{pGcL87}
P.\ M.\ Gruber and C.\ G.\ Lekkerkerker,
\textit{Geometry of numbers},
North-Holland, Amsterdam, 1987.

\bibitem{aI32}
A. E. Ingham, \textit{The Distribution of Prime Numbers},
Cambridge Mathematical Library, 1932.

\bibitem{sJ2006}
S.\ Janson, Random cutting and records in deterministic and random trees,
Random Structures Algorithms \textbf{29} (2006), 139--179.

\bibitem{Kannan92}
R. Kannan,
Lattice translates of a polytope and the Frobenius problem. 
Combinatorica \textbf{12} (1992) 161--177. 

\bibitem{oK97}
O.\ Kallenberg,
\textit{Foundations of modern probability},
Probability and its Applications (New York),
Springer-Verlag, 1997.

\bibitem{hL2010}
H.\ Li, Effective limit distribution of the Frobenius numbers,
arXiv:1101.3021.
%

\bibitem{Lu01}
L. Lu, The diameter of random massive graphs. Proceedings of the Twelfth Annual ACM-SIAM Symposium on Discrete Algorithms (Washington, DC, 2001), 912Ð921, SIAM, Philadelphia, PA, 2001.

\bibitem{Marklof10}
J. Marklof, The asymptotic distribution of Frobenius numbers. Invent. Math. \textbf{181} (2010) 179--207.

\bibitem{partIII}
J. Marklof and A. Str\"ombergsson, %
Kinetic transport in the two-dimensional periodic Lorentz gas, Nonlinearity
\textbf{21} (2008) 1413--1422.

\bibitem{partI}
J. Marklof and A. Str\"ombergsson, The distribution of free path lengths in
the periodic Lorentz gas and related lattice point problems,
Annals of Math. \textbf{172} (2010), 1949--2033.

%
%
%

\bibitem{Nachmias08}
A. Nachmias and Y. Peres, Critical random graphs: diameter and mixing time. Ann. Probab. \textbf{36} (2008) 1267--1286.

\bibitem{aN79}
A.\ Nijenhuis, A minimal-path algorithm for the ``money changing problem'',
Amer.\ Math.\ Monthly \textbf{86} (1979), 832--835.

%
%

\bibitem{Riordan2010}
O. Riordan and N. Wormald, The diameter of sparse random graphs. Combin. Probab. Comput. \textbf{19} (2010) 835--926.

\bibitem{cR59}
C.\ A.\ Rogers,
Lattice coverings of space,
Mathematika \textbf{6} (1959), 33--39.

\bibitem{Rodseth96}
\"O. R\"odseth, Weighted multi-connected loop networks. Discrete Math. \textbf{148} (1996) 161--173.

\bibitem{sReB76}
S. S. Ryshkov and E. Baranovskii,
\textit{C-types of $n$-dimensional lattices and $5$-dimensional primitive
parallelohedra (with application to the theory of coverings)},
Proceedings of the Steklov Institute of Mathematics \textbf{137} (1976).

%
%

\bibitem{aSaV2005}
A.\ Str\"ombergsson and A.\ Venkatesh,
Small solutions to linear congruences and Hecke equidistribution,
Acta Arithmetica \textbf{118} (2005), 41-78.

\bibitem{strombergsson11}
A.\ Str\"ombergsson,
On the limit distribution of Frobenius numbers, arXiv:1104.0108.

\bibitem{Ustinov10}
A. V. Ustinov, On the distribution of Frobenius numbers with three arguments. Izv. Math. \textbf{74} (2010) 1023--1049.

\bibitem{fV2003}
F. Vallentin, \textit{Sphere coverings, lattices, and tilings},
Phd.\ Thesis, Technische Universit\"at M\"unchen, 2003.

\bibitem{Zerovnik93}
J. Zerovnik and T. Pisanski, Computing the diameter in multiple-loop networks. J. Algorithms \textbf{14} (1993) 226--243.

\end{thebibliography}
\end{document}